% --------------------------------------------------------------
% AMS-LaTeX Paper ************************************************
% **** -----------------------------------------------------------

\documentclass[12pt, reqno]{amsart}

\usepackage{amssymb,amsmath,latexsym}
\usepackage[varg]{pxfonts}
\usepackage{amsthm, amscd, amsfonts, amssymb, graphicx, color}
\usepackage[bookmarksnumbered, colorlinks, plainpages]{hyperref}
\usepackage{marvosym}
\usepackage{paralist}
\usepackage{color}
% ----------------------------------------------------------------
\vfuzz2pt % Don't report over-full v-boxes if over-edge is small
\hfuzz2pt % Don't report over-full h-boxes if over-edge is small
% THEOREMS -------------------------------------------------------

\theoremstyle{plain}
\linespread{1.3}

\newtheorem{lem}{\textbf{Lemma}}[section]
\newtheorem{thm}[lem]{\textbf{Theorem}}

\newtheorem{de}[lem]{\textbf{Definition}}

\newtheorem{ex}[lem]{\textbf{Example}}
\newtheorem{pr}[lem]{\textbf{Proposition}}
\theoremstyle{definition}

%\theoremstyle{Remark}

% MATH -----------------------------------------------------------

\addtolength{\hoffset}{-2cm}
\textwidth=16cm

% ----------------------------------------------------------------
\begin{document}

\noindent {}   \\[0.50in]

%--------------------------------------------------------------

\title[A New Algorithm for Bregman weak relatively nonexpansive mappings]{A New Shrinking projection Algorithm  for an infinite family of Bregman weak relatively nonexpansive mappings in a Banach Space}
%--------------------------------------------------------------

\author[  Orouji, Soori, O'Regan, Agarwal]{Bijan Orouji$^{1,*}$,  Ebrahim Soori$^{2}$,  Donal O'Regan$^{3}$, Ravi P. Agarwal${^{4}}$}
\thanks{$^{*}$ Corresponding author\\ 2010 Mathematics Subject Classification. 47H10.}

\address{$^{1}$Department of Mathematics, Kermanshah Branch, Islamic Azad University, Kermanshah, Iran\\
$^{2}$Department of Mathematics, Lorestan University,  P.O. Box 465, Khoramabad, Lorestan, Iran\\
$^{3}$School of Mathematical and Statistical Sciences,  University of Galway, Galway, Ireland\\
$^{4}$Department of Mathematics, Texas A $\&$ M University Kingsville, Kingsville, USA,}

\email{bijan.orouji@iau.ac.ir(B. Orouji); sori.e@lu.ac.ir(E. Soori);
donal.oregan@nuigalway.ie(D. O'Regan); agarwal@tamuk.edu(R. P. Agarwal)}
%---------------------------------------------------------------

%\subjclass[2010]{47H10}

%---------------------------------------------------------------

\keywords{}
\maketitle
{\centering\footnotesize\textbf{This paper is dedicated to the memory of Pro. Wataru Takahashi}.\par}
%\vspace{0.1in}
\hrule width \hsize \kern 1mm
%--------------------------------------------------------------

\begin{abstract}
In this paper, using a new shrinking projection method and  generalized resolvents of maximal monotone operators and  generalized projections, we consider the strong convergence for finding a common point of the fixed points of a Bregman quasi-nonexpansive mapping, and common fixed points of a infinite family of Bregman weak relatively nonexpansive mappings, and  common zero points of a finite family of maximal monotone mappings, and  common solutions of an equilibrium problem in a reflexive Banach space.

\textbf{Keywords}: Bregman distance; Bregman weak relatively nonexpansive mappings; Bregman quasi-nonexpansive mappings; Bregman inverse strongly monotone mappings; Maximal monotone operators.
\end{abstract}
\maketitle
\vspace{0.1in}
\hrule width \hsize \kern 1mm

%----------------------------------------------------------------
\section{\textbf{Introduction}}
Throughout this paper, we denote the set of real numbers and the set of positive integers by $\mathbb{R}$ and $\mathbb{N}$, respectively. Let $E$  be a Banach space with the norm $\|.\|$ and the dual space $E^*$. For any $x\in E$, we denote the value of $x\in E^*$ at $x$ by $\langle x, x^*\rangle$. Let $\{x_n\}_{n\in \mathbb{N}}$ be a sequence in $E$. We denote the strong convergence of $\{x_n\}_{n\in \mathbb{N}}$ to $x\in E$ as $n\rightarrow \infty$ by $x_n \rightarrow x$ and the weak convergence by $x_n\rightharpoonup x$. The modulus $\delta$ of convexity of $E$ is denoted by
\begin{align}
\delta_{E}(\varepsilon)=inf \bigg\lbrace 1-\frac{\Vert x+y \Vert}{2}:\Vert x\Vert\leq 1,\Vert y\Vert\leq 1,\Vert x-y\Vert \geq \varepsilon \bigg\rbrace
\end{align}
for all $0\leq\varepsilon\leq 2$. A Banach space $E$ is said to be uniformly convex if $\delta(\varepsilon)>0$ for every $\varepsilon>0$. Let $S_E=\{ x\in E: \|x\|=1\}$. The norm of $E$ is said to be G$\hat{a}$teaux differentiable if for all $x,y\in S_E$, the limit
\begin{align}\label{3yjgh}
\lim _{t_{\rightarrow}0}\frac{\Vert x+ty\Vert-\Vert x\Vert}{t}
\end{align}
exists. In this case, $E$ is called smooth. If the limit \eqref{3yjgh} is attained uniformly for all $x,y\in S_E$, then $E$ is called uniformly smooth. The Banach space $E$ is said to be strictly convex if $ \Vert \frac{x+y}{2} \Vert <1$ whenever $x,y\in S_E$ and $x\neq y$. It is well known that $E$ is uniformly convex if and only if $E^*$ is smooth; for more detalis, see \cite{3qq1,3qq2,3qq3}.

Let $C$ be a nonempty subset of $E$ and $T:C\rightarrow E$ a mapping. We denote the set of fixed points of $T$ by $F(T)$ i.e., $F(T)=\{x\in C: Tx=x\}$. A mapping $T:C\rightarrow E$ is said to be nonexpansive if $\|T(x)-T(y)\|\leq \|x-y\|$ for all $x,y\in C$. A mapping $T:C\rightarrow E$ is said to be quasi-nonexpansive if $F(T)\neq \emptyset$ and $\|T(x)-T(y)\|\leq \|x-y\|$ for all $x\in C$ and $y\in F(T)$. The concept of nonexpansivity plays an important role in the study of Mann-type iteration \cite{3qq4} for finding fixed points of a mapping $T:C\rightarrow C$. Recall that the Mann-type iteration is given by the following formula:
\begin{equation}\label{3ased}
 x_{n+1}=\gamma_nTx_n+(1-\gamma_n)x_n, \qquad x_1\in C.
\end{equation}
Here, $\{\gamma_n\}_{n\in \mathbb{N}}$ is a sequence of real numbers in $[0, 1]$ satisfying some appropriate conditions. The construction of fixed points of nonexpansive mappings via Mann's algorithm \cite{3qq4} has been extensively investigated  in the current literature (see, for example, \cite{3qq5} and the references therein). In \cite{3qq5}, Reich proved that the sequence $\{x_n\}_{n\in \mathbb{N}}$ generated by Mann’s algorithm \eqref{3ased} converges weakly to a fixed point of $T$. However, the convergence of the sequence $\{x_n\}_{n\in \mathbb{N}}$ generated by Mann’s algorithm \eqref{3ased} is not strong in general (see a counter example in \cite{3qq6}; see also \cite{3qq7,3qq8}). Some attempts to modify the Mann iteration method \eqref{3ased} so that strong convergence is guaranteed have appeared. Bauschke and Combettes \cite{3qq9} proposed a modification of the Mann iteration process for a single nonexpansive mapping $T$ in a Hilbert space $H$ and  they proved that if the sequence $\{\alpha_n\}_{n\in \mathbb{N}}$ is bounded above from one, then the sequence $\{x_n\}_{n\in \mathbb{N}}$ generated by \eqref{3ased} converges strongly to a fixed point of $T$; see also Nakajo and Takahashi \cite{3qq10}.

Let $E$ be a smooth, strictly convex and reflexive Banach space and let $J$ be the normalized
duality mapping of $E$. Let $C$ be a nonempty, closed and convex subset of $E$. The generalized
projection $\Pi_C$ from $E$ onto $C$ is defined and denoted by
\begin{equation}\label{3wesd}
  \Pi_C(x)=argmin_{y\in C} \phi(y,x),
\end{equation}
where $\phi(x,y)=\|x\|^2-2\langle x,jy\rangle +\|y\|^2$. For more details, see \cite{3qq11,p5qdhr}.

Let $C$ be a nonempty, closed and convex subset of a smooth Banach space $E$, and let $T$ be a mapping from $C$ into itself. A point $p\in C$ is said to be an asymptotic fixed point \cite{3qq12} of $T$ if there exists a sequence $\{x_n\}_{n\in \mathbb{N}}$ in $C$ which converges weakly to $p$ and $\lim _{n\rightarrow\infty} \|x_n-T(x_n)\|=0$. We denote the set of all asymptotic fixed points of $T$ by $\hat{F}(T)$. A point $p\in C$ is called a strong asymptotic fixed point of $T$ if there exists a sequence $\{x_n\}_{n\in \mathbb{N}}$ in $C$ which converges strongly to $p$ and $\lim _{n\rightarrow\infty} \|x_n-T(x_n)\|=0$. We denote the set of all strong asymptotic fixed points of $T$ by $\tilde{F}(T)$.

Following Matsushita and Takahashi \cite{3qq13}, a mapping $T:C\rightarrow C$ is said to be relatively nonexpansive if the following conditions are satisfied:
\begin{itemize}
  \item [{\rm(i)}] $F(T)$ is nonempty,
  \item [{\rm(ii)}] $\phi(p,Tx)\leq \phi(p,x),\quad\forall x\in C, p\in F(T)$,
  \item [{\rm(iii)}]$\hat{F}(T)=F(T)$.
  \end{itemize}

In 2005, Matsushita and Takahashi \cite{3qq13} proved the following strong convergence theorem for relatively nonexpansive mappings in a Banach space.
\begin{thm}\label{3bnfht}
 Let $E$ be a uniformly convex and uniformly smooth Banach space, let $C$ be
a nonempty, closed and convex subset of $E$, let $T$ be a relatively nonexpansive mapping
from $C$ into itself, and let $\{\alpha_n\}_{n\in \mathbb{N}}$ be a sequence of real numbers such that $0\leq \alpha_n<1$ and $\lim sup _{t\rightarrow\infty} \alpha_n<1$. Suppose that $\{x_n\}_{n\in \mathbb{N}}$ is given by
 \begin{align}\label{mnasd}
  \begin{cases}
  x_0=x\in C,\\
  y_n=J_{E}^{-1}\Big((1-\alpha_n)J_{E}Tx_n+\alpha_nJ_{E}x_n\Big),\\
  H_n =\lbrace z\in C:\phi(z,y_n)\leq \phi(z,x_n)\rbrace,\\
  W_n =\lbrace z\in C:\langle x_n-z,J_{E}x-J_{E}x_n\rangle\geq 0\rbrace,\\
  x_{n+1}=\Pi_{H_n\cap W_n}x,\quad \forall n\in \mathbb{N}.
  \end{cases}
  \end{align}
If $F(T)$ is nonempty, then $\{x_n\}_{n\in \mathbb{N}}$ converges strongly to $\Pi_{F(T)}x$.
\end{thm}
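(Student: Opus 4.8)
The plan is to run the standard hybrid (CQ-type) projection argument, but with the generalized projection $\Pi_C$ and the Lyapunov functional $\phi$ in place of the Hilbert-space projection and the norm. I would first collect the handful of properties of $\phi$ that drive everything: (a) $\phi(x,y)\geq(\|x\|-\|y\|)^2$, so sublevel sets of $\phi(\cdot,x)$ are bounded; (b) the variational characterization $z_0=\Pi_C x \iff \langle z_0-y, J_Ex-J_Ez_0\rangle\geq 0$ for all $y\in C$, together with $\phi(y,\Pi_Cx)+\phi(\Pi_Cx,x)\leq\phi(y,x)$; (c) the convexity estimate $\phi\big(z,J_E^{-1}(\lambda J_Ea+(1-\lambda)J_Eb)\big)\leq\lambda\phi(z,a)+(1-\lambda)\phi(z,b)$; and (d) the Kamimura--Takahashi lemma: in a uniformly convex and uniformly smooth $E$, if $\{u_n\},\{v_n\}$ are bounded and $\phi(u_n,v_n)\to 0$, then $\|u_n-v_n\|\to 0$. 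I would also note that $F(T)$ is closed and convex (so $\Pi_{F(T)}x$ makes sense), which follows from (ii) by a routine argument.

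\emph{Step 1 (the algorithm is well defined).} By induction on $n$ I would show $F(T)\subseteq H_n\cap W_n$ and that each $H_n,W_n$ is closed and convex; for $H_n$ this is because $\phi(z,y_n)\leq\phi(z,x_n)$ rearranges to a linear inequality in $z$. For $p\in F(T)$, combining (c) with the relative nonexpansiveness $\phi(p,Tx_n)\leq\phi(p,x_n)$ gives $\phi(p,y_n)\leq(1-\alpha_n)\phi(p,Tx_n)+\alpha_n\phi(p,x_n)\leq\phi(p,x_n)$, i.e. $p\in H_n$; and $p\in W_{n+1}$ follows from the variational characterization of $x_{n+1}=\Pi_{H_n\cap W_n}x$ applied to $p\in H_n\cap W_n$. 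Hence $x_{n+1}$ is well defined.

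\emph{Step 2 (convergence of $\phi(x_n,x)$ and $\|x_{n+1}-x_n\|\to 0$).} From the definition of $W_n$ one reads off $x_n=\Pi_{W_n}x$. Applying (b) with $y=x_{n+1}\in H_n\cap W_n\subseteq W_n$ shows $\{\phi(x_n,x)\}$ is nondecreasing, while $y=\Pi_{F(T)}x=:u\in F(T)\subseteq W_n$ shows $\phi(x_n,x)\leq\phi(u,x)$. Thus $\phi(x_n,x)$ converges and $\{x_n\}$ is bounded by (a). Again by (b), $\phi(x_{n+1},x_n)\leq\phi(x_{n+1},x)-\phi(x_n,x)\to 0$, so (d) gives $\|x_{n+1}-x_n\|\to 0$.

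\emph{Step 3 ($\|x_n-Tx_n\|\to 0$) and conclusion.} Since $x_{n+1}\in H_n$ we get $\phi(x_{n+1},y_n)\leq\phi(x_{n+1},x_n)\to 0$, so (d) yields $\|x_{n+1}-y_n\|\to 0$, hence $\|x_n-y_n\|\to 0$ and, by uniform norm-to-norm continuity of $J_E$ on bounded sets, $\|J_Ex_n-J_Ey_n\|\to 0$. As $J_Ex_n-J_Ey_n=(1-\alpha_n)(J_Ex_n-J_ETx_n)$ and $\liminf(1-\alpha_n)>0$, this forces $\|J_Ex_n-J_ETx_n\|\to 0$, and uniform continuity of $J_E^{-1}$ on bounded sets gives $\|x_n-Tx_n\|\to 0$. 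By reflexivity take $x_{n_k}\rightharpoonup p$; then $p\in\hat F(T)=F(T)$. Using weak lower semicontinuity of $\phi(\cdot,x)$ and minimality of $u=\Pi_{F(T)}x$ over $F(T)$, $\phi(u,x)\leq\phi(p,x)\leq\liminf\phi(x_{n_k},x)=\lim\phi(x_n,x)\leq\phi(u,x)$; hence $p=u$ and $\lim\phi(x_n,x)=\phi(u,x)$. Finally (b) with $y=u$ gives $\phi(u,x_n)\leq\phi(u,x)-\phi(x_n,x)\to 0$, so (d) yields $x_n\to u=\Pi_{F(T)}x$.

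The main obstacle is Step 3: pushing the $\phi$-smallness back through the duality mappings to obtain $\|x_n-Tx_n\|\to 0$ is exactly where the full strength of the hypotheses enters (uniform convexity for the Kamimura--Takahashi estimate, uniform smoothness for norm-to-norm continuity of $J_E$ and $J_E^{-1}$ on bounded sets, and the condition $\limsup\alpha_n<1$ to keep $1-\alpha_n$ away from $0$); invoking $\hat F(T)=F(T)$ is then what legitimizes passing to the weak subsequential limit.
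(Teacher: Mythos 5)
The paper states this result without proof, citing it as Theorem 4.1 of Matsushita and Takahashi \cite{3qq13}; your proposal is correct and reproduces essentially the original argument from that source (the hybrid projection scheme, the identification $x_n=\Pi_{W_n}x$, the monotonicity and boundedness of $\phi(x_n,x)$, the Kamimura--Takahashi lemma to pass from $\phi\to 0$ to norm convergence, and $\hat F(T)=F(T)$ to handle the weak limit). The only points worth making explicit are that $\{Tx_n\}$ and $\{y_n\}$ are bounded (via $\phi(p,Tx_n)\leq\phi(p,x_n)$ and $(\|p\|-\|Tx_n\|)^2\leq\phi(p,Tx_n)$) before invoking the uniform continuity of $J_E$, $J_E^{-1}$ and the Kamimura--Takahashi estimate, but these are routine.
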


Also, in 2015,  Naraghirad and  Timnak \cite{p19a} extend Theorem \ref{3bnfht} and proved the following strong convergence theorem for an infinite  family of Bregman weak relatively nonexpansive mappings (to be defined in Section 2)  in a Banach space.
\begin{thm}
Let $E$ be a reflexive Banach space and $g:E\rightarrow \mathbb{R}$ be a convex, continuous, strongly coercive and G$\hat{a}$teaux differentiable function which is bounded on bounded subsets and uniformly convex on bounded subsets of $E$. Let $C$ be a nonempty, closed and convex subset of $E$. Let $\{S_n\}_{n\in \mathbb{N}}$ be a family of Bregman weak relatively nonexpansive mappings of $C$ into itself such that $F:=\cap_{i=1}^\infty F(S_n)\neq \emptyset$, and  let $\{\beta_{n,k}:k,n\in \mathbb{N}, 1\leq k\leq n\}$ be a sequence of real numbers such that $0<\beta_{n,1}\leq 1$ and $0<\beta_{n,i}<1$ for $n\in \mathbb{N}$ and every $i=2, 3, ..., n$. Let $W_n$ be the Bregman $W$-mapping generated by $S_n, S_{n-1}, ..., S_1$ and  $\beta_{n,n}, \beta_{n,n-1}, ..., \beta_{n,1}$. Let $\{\alpha_n\}_{n\in \mathbb{N}}$ be a sequence in $[0, 1)$ such that $\lim_{n\rightarrow +\infty}\alpha_n(1-\alpha_n)>0$. Let $\{x_n\}_{n\in \mathbb{N}}$ be a sequence generated by
 \begin{align}\label{sdomrt}
  \begin{cases}
  x_0=x\in C \qquad chosen \:\: arbitrarily,\\
  y_n=\nabla g^*\Big((1-\alpha_n)\nabla g(W_n(x_n))+\alpha_n\nabla g(x_n)\Big),\\
  C_n =\lbrace z\in C:\phi(z,y_n)\leq \phi(z,x_n)\rbrace,\\
  x_{n+1}=Proj^g_{C_{n+1}}x,\quad \forall n\in \mathbb{N}\cup \{0\}.
  \end{cases}
  \end{align}
  Then $\{x_n\}_{n\in \mathbb{N}}$ converges strongly to $proj^g_F(x_0)$ as $n\rightarrow \infty$.
\end{thm}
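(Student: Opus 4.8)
The plan is to follow the now-standard shrinking projection (CQ-type) argument adapted to the Bregman setting, using the Bregman distance $D_g$ associated with $g$ in place of the classical $\phi$. First I would record the basic tools: the three-point identity for $D_g$, the fact that $D_g(p,\nabla g^*(\cdot))$ is convex in the dual variable, and that under the hypotheses on $g$ the Bregman projection $\mathrm{proj}^g_K$ onto a nonempty closed convex set $K$ is well defined and single-valued, characterized by the variational inequality $\langle \nabla g(x)-\nabla g(\mathrm{proj}^g_K x),\, y-\mathrm{proj}^g_K x\rangle\le 0$ for all $y\in K$. I would also recall from the cited work the key property of the Bregman $W$-mapping $W_n$: it is a Bregman (quasi-)nonexpansive self-map of $C$ with $F(W_n)\supseteq F$, so $D_g(p,W_n x)\le D_g(p,x)$ for every $p\in F$, and the relation $\bigcap_n F(W_n)=F$ together with the asymptotic-fixed-point compatibility that comes from the weak relative nonexpansivity of the $S_n$.

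Next I would run the induction that the sets $C_n$ are nonempty, closed and convex and contain $F$, so that $x_{n+1}=\mathrm{proj}^g_{C_{n+1}}x_0$ is well defined. Closedness and convexity of $C_n=\{z\in C: D_g(z,y_n)\le D_g(z,x_n)\}$ follow because the defining inequality is equivalent to an affine inequality in $z$ once the quadratic-type terms cancel; that $F\subseteq C_n$ follows from the convexity of $D_g(p,\nabla g^*(\cdot))$ applied to $y_n=\nabla g^*((1-\alpha_n)\nabla g(W_n x_n)+\alpha_n\nabla g(x_n))$, giving $D_g(p,y_n)\le (1-\alpha_n)D_g(p,W_n x_n)+\alpha_n D_g(p,x_n)\le D_g(p,x_n)$ for $p\in F$. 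Then the projection characterization gives the monotonicity $D_g(x_n,x_0)\le D_g(x_{n+1},x_0)\le D_g(p,x_0)$ for all $p\in F$, hence $\{x_n\}$ is bounded and $\{D_g(x_n,x_0)\}$ is nondecreasing and bounded, so convergent. From $x_{n+1}\in C_n$ and the three-point identity one extracts $D_g(x_{n+1},x_n)\to 0$, and since $g$ is uniformly convex on bounded sets this forces $\|x_{n+1}-x_n\|\to 0$; combining with $x_{n+1}\in C_n$ again yields $D_g(x_{n+1},y_n)\to 0$ and hence $\|x_n-y_n\|\to 0$ and $D_g(x_n,y_n)\to 0$.

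Then comes the step extracting asymptotic regularity for the operators. Writing $a_n=\nabla g(W_n x_n)$, $b_n=\nabla g(x_n)$, the definition of $y_n$ gives $\nabla g(y_n)=(1-\alpha_n)a_n+\alpha_n b_n$, so $(1-\alpha_n)(\nabla g(x_n)-\nabla g(W_n x_n))=\nabla g(x_n)-\nabla g(y_n)$; since $\nabla g$ is norm-to-norm uniformly continuous on bounded sets (equivalently, $\nabla g^*$ is, which follows from uniform convexity of $g$ on bounded sets via the duality), $\|\nabla g(x_n)-\nabla g(y_n)\|\to 0$, and using $\liminf \alpha_n(1-\alpha_n)>0$ (hence $1-\alpha_n$ is bounded away from $0$) we get $\|\nabla g(x_n)-\nabla g(W_n x_n)\|\to 0$ and therefore $\|x_n-W_n x_n\|\to 0$. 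The main obstacle is the passage from this $W_n$-asymptotic regularity to showing that every weak subsequential limit of $\{x_n\}$ lies in $F=\bigcap_i F(S_i)$: one must use the structure of the Bregman $W$-mapping (a telescoping/monotone estimate among the partial composites $U_{n,k}$, exactly as in Naraghirad–Timnak) to deduce $\|x_n-S_i x_n\|\to 0$ for each fixed $i$, and then invoke the \emph{weak relative nonexpansivity} — precisely the hypothesis $\widehat F(S_i)=F(S_i)$ (or its $\tilde F$ variant) — to conclude the weak limit is a common fixed point. Finally, since $D_g(x_n,x_0)$ is bounded by $D_g(p,x_0)$ for all $p\in F$, a standard argument with the projection inequality shows the weak limit is unique and equals $q:=\mathrm{proj}^g_F x_0$, and upgrading weak to strong convergence uses $D_g(x_n,x_0)\to D_g(q,x_0)$ together with the Kadec–Klee-type property guaranteed by uniform convexity of $g$ on bounded sets. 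I expect the bookkeeping for the $W$-mapping estimates to be the technically heaviest part; everything else is the routine shrinking-projection machinery transported to the Bregman distance.
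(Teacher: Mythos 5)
First, a remark on what you are being compared against: the paper does not actually prove this statement — it is quoted (as Theorem 1.2) from Naraghirad and Timnak \cite{p19a} — so the only proof available for comparison is the paper's proof of its own generalization, Theorem \ref{asli}, which runs the same shrinking-projection machinery. Your outline matches that machinery in Steps 1--3 (closedness/convexity of $C_n$, $F\subseteq C_n$ via convexity of $D_g(p,\nabla g^*(\cdot))$, monotone bounded $D_g(x_n,x_0)$, $D_g(x_{n+1},x_n)\to 0$, $\|x_n-y_n\|\to 0$) and in the asymptotic-regularity step for $W_n$.

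There is, however, a genuine gap in your endgame, and it sits exactly on the word \emph{weak} in ``Bregman weak relatively nonexpansive.'' Your plan is: show every \emph{weak} subsequential limit of $\{x_n\}$ lies in $F$, then upgrade to strong convergence via a Kadec--Klee argument. That ordering only works if $\hat{F}(S_i)=F(S_i)$ (relative nonexpansivity). Here the hypothesis is only $\tilde{F}(S_i)=F(S_i)$, i.e.\ the fixed-point identification is valid only for \emph{strongly} convergent approximating sequences; from $x_{n_k}\rightharpoonup p$ and $\|x_{n_k}-S_ix_{n_k}\|\to 0$ you cannot conclude $p\in F(S_i)$. The correct order — and the one the paper follows in Theorem \ref{asli} — is to establish strong convergence $x_n\to\bar{x}$ \emph{first}, via the Cauchy argument: for $m>n$, $x_m\in C_m\subseteq C_n$ and Lemma \ref{rtmnw}(iii) give $D_g(x_m,x_n)\le D_g(x_m,x_0)-D_g(x_n,x_0)$, which tends to $0$ because $D_g(x_n,x_0)$ converges; only then does $\|x_n-W_nx_n\|\to 0$ place $\bar{x}$ in $\tilde{F}$ of the relevant maps. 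You already have every ingredient for this, so the fix is a reordering rather than a new idea, but as written the weak-limit step would fail. A secondary technical over-reach: you invoke uniform norm-to-norm continuity of $\nabla g$ on bounded sets, which is not available under the stated hypotheses ($g$ is only G\^{a}teaux differentiable; uniform convexity on bounded sets yields, via the duality of Proposition 2.2, uniform continuity of $\nabla g^*$, not of $\nabla g$ — the paper's Theorem \ref{asli} assumes uniform Fr\'{e}chet differentiability precisely to get this from Lemma \ref{zakhv}). For the quoted theorem one instead extracts $\|\nabla g(W_nx_n)-\nabla g(x_n)\|\to 0$ from the gauge of uniform convexity of $g$ on bounded sets together with $D_g(p,x_n)-D_g(p,y_n)\to 0$, and $\|x_n-W_nx_n\|\to 0$ from sequential consistency (Lemma \ref{qpocv}).
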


In this paper, using  maximal monotone operators, and a shrinking projection method we consider the strong convergence for finding a common point of the fixed points of a Bregman quasi-nonexpansive mapping, and common fixed points of a infinite family of Bregman weak relatively nonexpansive mappings, and  common zero points of a finite family of maximal monotone mappings, and  common solutions of an equilibrium problem in a reflexive Banach space.

\section{\textbf{Preliminaries}}
Let $E$ be a reflexive real Banach space and $C$ be a nonempty closed and convex subset of $E$. Suppose that $h:C\times C\rightarrow \mathbb{R}$ is a bifunction which  satisfies the condition $h(x, x)=0$ for every $x\in C$. The equilibrium problem for $h$ is stated as follows:
\begin{equation}\label{khityn}
  \text{Find} \quad x^*\in C\quad \text{such that} \quad h(x^*, y)\geq 0,\quad \forall y\in C.
\end{equation}
The set of solutions of \eqref{khityn} is denoted by $EP(h)$.

Suppose that $f:E\rightarrow(-\infty,+\infty]$ is a proper, lower semicontinuous and convex function. Let $dom f:=\{x\in E;\;f(x)<\infty\}$, the domain of $f$.  The function $f$ is said to be strongly coercive if $\displaystyle\lim_{\|x\|\rightarrow\infty}\frac{f(x)}{\|x\|}=+\infty$. Let $x\in$ \textit{int\,dom}$f$, and the subdifferential of $f$ at $x$ is the convex set valued mapping $\partial f:E\rightarrow 2^{E^*}$ defined by
\begin{equation*}
\partial f(x)=\{\xi\in E^*:\;f(x)+\langle y-x,\xi\rangle\leq f(y),\;\forall y\in E\},\quad \forall x\in E,
\end{equation*}
 and $f^*:E^*\rightarrow (-\infty,+\infty]$ is the Fenchel conjugate of $f$ defined by
\begin{align*}
  f^*(\xi)=sup\{\langle \xi,x\rangle-f(x):x\in E\}.
\end{align*}
It is well known that $\xi \in \partial f(x)$ is equivalent to
\begin{align*}
f(x)+ f^*(\xi)=\langle x, \xi\rangle.
\end{align*}
 Note $f^*$ is a proper convex and lower semicontinuous function. The function $f$ is said to be cofinite if \textit{dom}$f^*=E^*$.

For any $x\in$ \textit{int\,dom}$f$ and $y\in E$, we denote by $f^\circ(x,y)$, the right-hand dervative of $f$ at $x$ in the direction $y$, that is,
\begin{equation}\label{solkf}
f^\circ(x,y):=\displaystyle\lim_{t\rightarrow0^+}\frac{f(x+ty)-f(x)}{t}.
\end{equation}
The function $f$ is called G$\hat{\text{a}}$teaux differentiable at $x$, if the limit in \eqref{solkf}
 exists for any $y\in E$. In this case, the gradient of $f$ at $x$ is the linear function $\nabla f$ which is defined by $\langle y,\nabla f(x)\rangle:=f^\circ(x,y)$ for any $y\in E$. The function $f$ is said to be G$\hat{\text{a}}$teaux differentiable if it is G$\hat{\text{a}}$teaux differentiable at each $x\in int\,dom f$. The function $f$ is said to be Fr$\acute{\text{e}}$chet differentiable at $x$, if the limit in \eqref{solkf} is attained uniformly in $\|y\|=1$, for any $y\in E$. Finally, $f$ is said to be uniformly Fr$\acute{\text{e}}$chet differentiable on a subset $C$ of $E$, if the limit in \eqref{solkf} is attained uniformly for $x\in C$ and $\|y\|=1$.
\begin{lem}\cite{p24qa}\label{zakhv}
  If $f:E\rightarrow \mathbb{R}$ is uniformly $Fr\acute{e}chet$ differentiable and bounded on bounded subsets of $E$, then $f$ is uniformly continuous on bounded subsets of $E$ and $\nabla f$ is uniformly continuous on bounded subsets of $E$ from the strong topology of $E$ to the strong topology of $E^*$.
\end{lem}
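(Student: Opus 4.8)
The plan is to split the statement into its two assertions, dispatch the first quickly, and concentrate on the second. As a preliminary step, fix a bounded set $B\subseteq E$, enlarge it to the still-bounded set $B'=B+B_E$ (with $B_E$ the closed unit ball of $E$), and first show $\nabla f$ is bounded on $B$. Applying the definition of uniform Fr\'echet differentiability on $B'$ with $\eps=1$ produces $\delta\in(0,1)$ such that $|f(x+h)-f(x)-\langle h,\nabla f(x)\rangle|\le\|h\|$ for all $x\in B'$ and $\|h\|\le\delta$; taking $h=\delta u$ with $u$ a unit vector and using that $f$ is bounded on $B'$ gives $\delta\,|\langle u,\nabla f(x)\rangle|\le 2\sup_{B'}|f|+\delta$, hence a uniform bound $\|\nabla f(x)\|\le L$ for $x\in B$. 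The first conclusion now follows: for $x,y$ in a bounded set the scalar map $t\mapsto f(x+t(y-x))$ is differentiable with derivative $\langle y-x,\nabla f(x+t(y-x))\rangle$, so the one–dimensional mean value theorem yields $|f(y)-f(x)|\le L\|y-x\|$ on the relevant bounded set; thus $f$ is even locally Lipschitz, in particular uniformly continuous on bounded subsets.

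The substance is the second assertion, that $\nabla f$ is uniformly continuous on bounded subsets from the strong topology of $E$ to the strong topology of $E^*$. I would argue by contradiction: if it fails, there are $\eps_0>0$ and sequences $\{x_n\},\{y_n\}$ in a bounded set with $\|x_n-y_n\|\to0$ but $\|\nabla f(x_n)-\nabla f(y_n)\|\ge\eps_0$, and we may choose unit vectors $u_n$ with $\langle u_n,\nabla f(x_n)-\nabla f(y_n)\rangle\ge\eps_0/2$. Invoke uniform Fr\'echet differentiability with $\eps=\eps_0/8$ on a bounded set containing the $x_n,y_n$ together with their $\delta$-neighbourhoods: for a suitable $\delta>0$ and all $0<t<\delta$,
\begin{equation*}
\big|f(z+tu_n)-f(z)-t\langle u_n,\nabla f(z)\rangle\big|\le \tfrac{\eps_0}{8}\,t\qquad(z=x_n\ \text{or}\ z=y_n).
\end{equation*}
Subtracting the two estimates and rearranging,
\begin{equation*}
t\,\langle u_n,\nabla f(x_n)-\nabla f(y_n)\rangle\le \big|f(x_n+tu_n)-f(y_n+tu_n)\big|+\big|f(x_n)-f(y_n)\big|+\tfrac{\eps_0}{4}\,t,
\end{equation*}
and by the Lipschitz bound of the first half the first two terms on the right are each at most $L\|x_n-y_n\|$. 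Hence $\tfrac{\eps_0}{2}\le \tfrac{2L\|x_n-y_n\|}{t}+\tfrac{\eps_0}{4}$; fixing $t=\delta/2$ and letting $n\to\infty$ forces $\eps_0/2\le\eps_0/4$, a contradiction.

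The hard part is exactly this last step. Uniform Fr\'echet differentiability gives control of the increment of $f$ only through $\nabla f$ at the base point, so to compare $\nabla f(x_n)$ with $\nabla f(y_n)$ one is forced to evaluate $f$ at the two shifted points $x_n+tu_n$ and $y_n+tu_n$ and then absorb the ``cross term'' $[f(x_n+tu_n)-f(y_n+tu_n)]-[f(x_n)-f(y_n)]$; it is precisely the uniformity in the base point (the same $\delta$ serving $x_n$ and $y_n$ simultaneously) plus the Lipschitz estimate from the first half that renders this term $o(t)$ after dividing by $t$. I note that convexity of $f$ is never used; if one prefers, the mean value theorem in the first half may be replaced by the subgradient inequality when $f$ is convex, and the same contradiction argument then also goes through using monotonicity of $\nabla f$.
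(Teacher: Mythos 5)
The paper never proves this lemma: it is imported by citation from \cite{p24qa}, so there is no in-house argument to compare yours against, and I can only assess your proof on its own terms. It is correct and complete, and it follows the standard route for this folklore fact (essentially the classical equivalence between uniform differentiability and uniform continuity of the derivative): you first extract a uniform bound $\|\nabla f(x)\|\le L$ on a bounded set from the $\varepsilon=1$ instance of uniform Fr\'echet differentiability combined with boundedness of $f$ on the enlarged set $B+B_E$; this gives the Lipschitz (hence uniform continuity) estimate for $f$ via the scalar mean value theorem; and the contradiction argument then works because the \emph{same} increment $tu_n$ is applied at both base points $x_n$ and $y_n$, so the cross term is controlled by $2L\|x_n-y_n\|=o(1)$ while $t$ is held fixed at $\delta/2$. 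The quantifiers are handled properly: the definition of uniform Fr\'echet differentiability constrains only the base point, so enlarging the bounded set before choosing $\delta$ removes any apparent circularity, and choosing unit vectors $u_n$ realizing half the dual norm is legitimate. Your closing remark is worth keeping: convexity of $f$ is nowhere used, so your argument is actually more general than the Legendre setting in which the lemma is applied here; in the convex case one could instead combine the subgradient inequality with monotonicity of $\nabla f$, but nothing is gained. Two cosmetic points only: in the gradient bound take $\|h\|<\delta$ (or $h=(\delta/2)u$) rather than $\|h\|=\delta$, and replace ``together with their $\delta$-neighbourhoods'' by ``their $1$-neighbourhoods,'' since $\delta$ is chosen only afterwards; neither affects the validity of the proof.
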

\begin{pr}\cite{p31}
 Let $f:E\rightarrow \mathbb{R}$ be a convex function which is bounded on bounded subsets of $E$. Then the following assertions are equivalent:
 \begin{itemize}
  \item [{\rm(i)}] $f$ is strongly coercive and uniformly convex on bounded subsets of $E$.
  \item [{\rm(ii)}] $f^*$ is $Fr\acute{e}chet$ differentiable and $\triangledown f^*$ is uniformly norm-to-norm continuous on bounded subsets of $domf^*=E^*$.
  \end{itemize}
\end{pr}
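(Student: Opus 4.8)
The plan is to obtain this equivalence from the Fenchel-duality correspondence between uniform convexity and uniform smoothness, carefully localized to bounded subsets. First I would introduce the two relevant moduli: for a proper, convex, lower semicontinuous $g:E\to(-\infty,+\infty]$, a bounded set $B$, and $t\ge 0$, the gauge of uniform convexity
\[
\rho^B_g(t)=\inf\Big\{\tfrac12 g(x)+\tfrac12 g(y)-g\big(\tfrac{x+y}{2}\big)\ :\ x,y\in B,\ \|x-y\|\ge t\Big\},
\]
so that $g$ is uniformly convex on bounded subsets exactly when $\rho^B_g(t)>0$ for every bounded $B$ and every $t>0$, and, dually, the modulus of smoothness $\sigma^B_g$ that governs uniform Fr\'echet differentiability of $g$ on $B$. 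The crucial technical ingredient is the Asplund--Rockafellar--Zalinescu duality: the moduli $\rho_f$ and $\sigma_{f^*}$ are related through Fenchel conjugation in the scalar variable, so that $\rho_f$ is positive on bounded sets for all $t>0$ if and only if $\sigma_{f^*}$ has the corresponding smoothness behaviour on bounded sets. I would also record the Fenchel--Moreau identity $f=f^{**}$, valid since $f$ is proper, convex and lower semicontinuous, which lets one pass freely between $f$ and $f^*$.

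For the implication (i)$\Rightarrow$(ii), I would first use strong coercivity: a direct Fenchel--Young estimate shows that $\lim_{\|x\|\to\infty}f(x)/\|x\|=+\infty$ forces $f^*$ to be finite and bounded on every bounded subset of $E^*$; in particular $\mathrm{dom}\,f^*=E^*$, hence $f^*$ is continuous. Feeding the uniform convexity of $f$ on bounded subsets into the duality lemma then yields that $f^*$ is uniformly Fr\'echet differentiable on bounded subsets of $E^*$; from the modulus estimate one reads off both Fr\'echet differentiability at each point and the uniform norm-to-norm continuity of $\nabla f^*$ on bounded subsets (this last passage is, on the conjugate side, the analogue of Lemma \ref{zakhv}).

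For (ii)$\Rightarrow$(i) I would reverse each step. Fr\'echet differentiability of $f^*$ at every point of $E^*$ forces $\mathrm{dom}\,f^*=E^*$; chaining the uniform norm-to-norm continuity of $\nabla f^*$ along line segments inside a bounded set shows that $\nabla f^*$ is bounded on bounded subsets, and integrating the gradient then shows $f^*$ is bounded on bounded subsets, so by the coercivity dichotomy $f=f^{**}$ is strongly coercive. The uniform continuity of $\nabla f^*$ on bounded subsets makes the smoothness modulus $\sigma^B_{f^*}$ nontrivial on each bounded $B$, and conjugating back gives $\rho^{B'}_f(t)>0$ for every bounded $B'$ and $t>0$, i.e. $f$ is uniformly convex on bounded subsets, where one again uses $f=f^{**}$ together with reflexivity to transport the estimate from $E^*$ back to $E$. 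I expect the main obstacle to be precisely this localization bookkeeping: the $\rho$--$\sigma$ conjugacy is transparent globally, but one must verify that restricting attention to bounded sets on one side of the duality corresponds to bounded sets on the other, which is exactly where the boundedness of $\nabla f$ and $\nabla f^*$ on bounded sets (consequences, respectively, of strong coercivity and of $\mathrm{dom}\,f^*=E^*$) does the work.
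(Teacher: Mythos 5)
The paper gives no proof of this proposition; it is quoted verbatim as a known result from Zalinescu's book \cite{p31}, where it is established exactly by the conjugate duality between the gauge of uniform convexity of $f$ on bounded sets and the modulus of smoothness of $f^*$, together with the Fenchel--Young bookkeeping that identifies strong coercivity of $f$ with boundedness of $f^*$ on bounded subsets of $E^*$. Your sketch reproduces that route faithfully, including the correct identification of the one genuinely delicate point (matching bounded sets across the duality via boundedness of the gradients), so it is essentially the same approach as the cited source.
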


The function $f$ is said to be Legendre if it satisfies the following two conditions:\\
  (L1) $int\,dom f\neq \emptyset$ and $\partial f$ is single-valued on its domain.\\
  (L2) $int\,dom f^*\neq \emptyset$ and $\partial f^*$ is single-valued on its domain.\\
Because here the space $E$ is assumed to be reflexive, we always have $(\partial f)^{-1}=\partial f^*$ \cite[p. 83]{p6}. This fact, when combined with the conditions (L1) and (L2), implies the following equalities:
\begin{align*}
\nabla f&=(\nabla f^*)^{-1},\\
ran \nabla f&=dom \nabla f^*=int\,dom f^*,\\
ran \nabla f^*&=dom \nabla f=int\,dom f.
\end{align*}
In addition, the conditions  (L1) and (L2), in conjunction with Theorem 5.4 of \cite{p2}, imply that the functions $f$ and $f^*$ are strictly convex on the interior of their respective domains and $f$ is Legendre if and only if $f^*$ is Legendre.
One important and interesting Legendre function is $\frac{1}{p}\|.\|^p,\;p\in (1,2]$.

 When $E$ is a uniformly convex and $p$-uniformly smooth Banach space with $p\in (1,2]$, the generalized duality mapping $J_p:E\rightarrow2^{E^*}$ is defined by
\begin{equation*}
J_p(x)=\{j_p(x)\in E^*:\langle j_p(x),x\rangle=\|x\|.\|j_p(x)\|\;,\;\|j_p(x)\|=\|x\|^{p-1}\}.
\end{equation*}
In this case, the gradient $\nabla f$ of $f$  coincides with the generalized duality mapping $J_p$ of $E$, $\nabla f=J_p,\;p\in(1,2]$.
Several interesting examples of Legendre functions are presented in \cite{p2,p3,p4,p201}.
\begin{de}\cite{p14}
Let $f:E\rightarrow(-\infty,+\infty]$ be a convex and $G\hat{a}teaux$ differentiable function.The bifunction $D_f:domf\times int\;domf\rightarrow [0,+\infty)$ defined by
\begin{align*}
D_f(y,x):=f(y)-f(x)-\langle \nabla f(x),y-x\rangle.
\end{align*}
is called the Bregman distance with respect to f.
\end{de}
It should be noted that $D_f$ is not a distance in the usual sense of the term. Clearly, $D_f(x,x)=0$, but $D_f(y,x)=0$ may not imply $x=y$. In our case, when $f$ is Legendre this indeed holds \cite[Theorem 7.3(vi), p.642]{p2}. In general, $D_f$  satisfies the three point identity
\begin{equation}\label{awpon}
D_f(x,y)+D_f(y,z)-D_f(x,z)=\langle x-y, \nabla f(z)-\nabla f(y)\rangle,
\end{equation}
and the four point identity
\begin{equation*}
D_f(x,y)+D_f(\omega,z)-D_f(x,z)-D_f(\omega,y)=\langle x-\omega,\nabla f(z)-\nabla f(y)\rangle,
\end{equation*}
for any $x,\omega \in domf$ and $y,z\in int\,dom f$. Over the last 30 years, Bregman distances have been studied by many researchers (see \cite{p2,p5,p10,p11,p202}).

Let $f:E\rightarrow(-\infty,+\infty]$ be a convex function on E which is G$\hat{\text{a}}$teaux differentiable on \textit{int dom}$f$. The function $f$ is said to be totally convex at a point $x\in int\,dom\,f$ if its modulus of total convexity at $x,\;v_f(x,.):[0,+\infty)\rightarrow [0,+\infty]$, defined by
\begin{align*}
 v_f(x,t)=inf \{D_f(y,x):y\in dom f,\|y-x\|=t\},
\end{align*}
is positive whenever $t>0$. The function $f$ is said to be totally convex when it is totally convex at every point of \textit{int dom}$f$. The function f is said to be totally convex on bounded sets, if for any nonempty bounded set $B\subseteq E$ , the modulus of total convexity of $f$ on $B$, $v_f(B,t)$ is positive for any $t>0$, where $v_f(B,.):[0,+\infty)\rightarrow [0,+\infty]$  is defined by
\begin{align*}
  v_f(B,t)=inf\{v_f(x,t):x\in B\cap int\,dom\,f\}.
\end{align*}
We remark in passing that $f$ is totally convex on bounded sets if and only if $f$ is uniformly convex on bounded sets; (see \cite{p5qsd,p11.1,p12}).
\begin{pr}\cite{p11}\label{aplenfg}
 Let $f$ be a lower semicontinuous convex function with \textit{int dom}$f\neq \emptyset$. Then the function $f$ is differentiable at the point $x\in \textit{int dom}f$ if and only if $\partial f(x)$ consists of a single element.
\end{pr}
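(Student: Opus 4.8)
The plan is to prove the two implications separately, using the one-sided directional derivative $f^\circ(x,\cdot)$ from \eqref{solkf} as the bridge. Fix $x\in int\,dom f$. Since $f$ is convex and lower semicontinuous with $x$ an interior point of its domain, $f$ is continuous — indeed locally Lipschitz — near $x$; consequently, for each $y\in E$ the difference quotient $t\mapsto\frac{f(x+ty)-f(x)}{t}$ is nondecreasing on $(0,\infty)$ and bounded below near $0$, so $f^\circ(x,y)$ exists and is finite, and the functional $p(y):=f^\circ(x,y)$ is finite, positively homogeneous, subadditive, and Lipschitz on $E$. I would record once and for all the identity $\partial f(x)=\{\eta\in E^*:\langle y,\eta\rangle\le p(y)\ \forall y\in E\}$, which follows by rescaling the subgradient inequality $\langle ty,\eta\rangle\le f(x+ty)-f(x)$, dividing by $t>0$, and letting $t$ decrease to $0$ for one inclusion, and from $p(y)\le f(x+y)-f(x)$ (monotonicity of the quotients, with $t=1$) for the reverse.

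For the implication ($\Rightarrow$): if $f$ is G$\hat{a}$teaux differentiable at $x$ then $p(y)=\langle y,\nabla f(x)\rangle$, so $\nabla f(x)$ satisfies $\langle y,\nabla f(x)\rangle\le p(y)$ and therefore lies in $\partial f(x)$; and any $\eta\in\partial f(x)$ obeys $\langle y,\eta\rangle\le p(y)=\langle y,\nabla f(x)\rangle$ for all $y$, forcing $\eta=\nabla f(x)$. Hence $\partial f(x)$ is the singleton $\{\nabla f(x)\}$.

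For the implication ($\Leftarrow$): suppose $\partial f(x)=\{\xi\}$. Fix an arbitrary $y_0\in E$ and apply the Hahn--Banach theorem to the sublinear functional $p$ and the linear functional $sy_0\mapsto s\,p(y_0)$ on the line $\mathbb{R}y_0$, which is dominated by $p$ there because $p(-y_0)+p(y_0)\ge p(0)=0$ by subadditivity. This produces $\eta_0\in E^*$ with $\langle\cdot,\eta_0\rangle\le p$ on $E$ and $\langle y_0,\eta_0\rangle=p(y_0)$. Then $\eta_0\in\partial f(x)$, so $\eta_0=\xi$ and $p(y_0)=\langle y_0,\xi\rangle$. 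As $y_0$ was arbitrary, $f^\circ(x,y)=\langle y,\xi\rangle$ for every $y\in E$; applying this to $y$ and to $-y$ shows the full two-sided limit $\lim_{t\to0}\frac{f(x+ty)-f(x)}{t}$ exists and equals $\langle y,\xi\rangle$, i.e., $f$ is G$\hat{a}$teaux differentiable at $x$ with $\nabla f(x)=\xi$.

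The routine part is the manipulation of difference quotients; the point requiring care is the first paragraph — that an interior point of the domain of a lower semicontinuous convex function admits a neighborhood on which $f$ is finite and Lipschitz, so that $p$ is genuinely finite-valued and continuous (this is exactly where lower semicontinuity and $x\in int\,dom f$ enter) — together with the choice of the dominated functional in the Hahn--Banach step. I expect the finiteness and continuity of $p$ to be the main obstacle to a fully self-contained argument; once those are in hand, both directions are short.
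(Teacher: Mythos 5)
Your argument is correct and is essentially the standard proof of this fact: the paper itself gives no proof, citing Butnariu--Iusem, and the argument there runs exactly along your lines --- identify $\partial f(x)$ with the continuous linear functionals dominated by the sublinear directional derivative $p=f^\circ(x,\cdot)$ (finite and Lipschitz because a lower semicontinuous convex function on a Banach space is locally Lipschitz on the interior of its domain), then use Hahn--Banach to show $\partial f(x)$ is a singleton precisely when $p$ is linear, which is G\^ateaux differentiability. You have also correctly isolated the only delicate points, namely the local Lipschitz continuity at interior points and the domination check $p(y_0)+p(-y_0)\geq 0$ in the Hahn--Banach step.
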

\begin{pr}\cite{p11.1}
 Let $f:E\rightarrow(-\infty,+\infty]$ be a convex function and that it's domain contains at least two points. If $f$ is lower semicontinuous, then $f$ is totally convex on bounded sets if and only if $f$ is uniformly convex on bounded sets.
\end{pr}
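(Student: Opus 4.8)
The plan is to derive the equivalence from a single algebraic identity linking the ``midpoint gap'' of $f$ with two Bregman distances, together with the standard fact that the uniform convexity of a convex function may be tested using midpoints alone (that is, with parameter $\lambda=\tfrac12$). Write $\rho_B$ for the modulus of uniform convexity of $f$ on a bounded set $B$, so that $f$ is uniformly convex on bounded sets precisely when, for every bounded $B$ and every $t>0$,
\[
\rho_B(t):=\inf\Big\{\tfrac12 f(x)+\tfrac12 f(y)-f\big(\tfrac{x+y}{2}\big):\ x,y\in B,\ \|x-y\|=t\Big\}>0 .
\]
The key identity is: if $x,y\in\mathrm{dom}\,f$ and $z:=\tfrac{x+y}{2}\in\mathrm{int}\,\mathrm{dom}\,f$, then
\[
\tfrac12 f(x)+\tfrac12 f(y)-f(z)=\tfrac12 D_f(x,z)+\tfrac12 D_f(y,z),
\]
which follows immediately from the definition of $D_f$ because $\tfrac12\langle\nabla f(z),x-z\rangle+\tfrac12\langle\nabla f(z),y-z\rangle=0$. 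I will also use two elementary facts: (a) if $x\in\mathrm{int}\,\mathrm{dom}\,f$ and $y\in\mathrm{dom}\,f$, then $(1-\lambda)y+\lambda x\in\mathrm{int}\,\mathrm{dom}\,f$ for all $\lambda\in(0,1]$; and (b) $t\mapsto v_f(x,t)$ is nondecreasing, a consequence of $D_f\big(x+\tau(y-x),x\big)\le\tau\,D_f(y,x)$ for $\tau\in[0,1]$, which is just the convexity of $\tau\mapsto f(x+\tau(y-x))$.

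\emph{Uniform convexity on bounded sets $\Rightarrow$ total convexity on bounded sets.} Fix a bounded $B$, a number $t>0$, a point $x\in B\cap\mathrm{int}\,\mathrm{dom}\,f$ and an arbitrary $y\in\mathrm{dom}\,f$ with $\|y-x\|=t$. Put $z=\tfrac{x+y}{2}\in\mathrm{int}\,\mathrm{dom}\,f$ (by (a)) and let $B'$ be a bounded set containing $B$ and all such $y$ (e.g.\ the closed ball of radius $t+\sup_{v\in B}\|v\|$). Uniform convexity on $B'$ gives $f(y)\ge 2f(z)-f(x)+2\rho_{B'}(t)$; substituting into $D_f(y,x)=f(y)-f(x)-\langle\nabla f(x),y-x\rangle$ and using $y-x=2(z-x)$,
\[
D_f(y,x)\ \ge\ 2\big(f(z)-f(x)-\langle\nabla f(x),z-x\rangle\big)+2\rho_{B'}(t)\ =\ 2D_f(z,x)+2\rho_{B'}(t)\ \ge\ 2\rho_{B'}(t).
\]
Taking the infimum over $y$ and then over $x$ gives $v_f(B,t)\ge 2\rho_{B'}(t)>0$, so $f$ is totally convex on bounded sets.

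\emph{Total convexity on bounded sets $\Rightarrow$ uniform convexity on bounded sets.} Fix a bounded $B$ and $t>0$, and let $x,y\in B$ with $\|x-y\|=t$; we must bound $\tfrac12 f(x)+\tfrac12 f(y)-f(z)$, $z=\tfrac{x+y}{2}$, below by a positive constant depending only on $B$ and $t$. We may assume $x,y\in\mathrm{dom}\,f$, since otherwise the quantity is $+\infty$. Fix $w\in\mathrm{int}\,\mathrm{dom}\,f$ and put $B'=\mathrm{conv}(B\cup\{w\})$, which is bounded. If $z\in\mathrm{int}\,\mathrm{dom}\,f$, the key identity and total convexity give at once $\tfrac12 f(x)+\tfrac12 f(y)-f(z)=\tfrac12 D_f(x,z)+\tfrac12 D_f(y,z)\ge v_f(z,t/2)\ge v_f(B',t/2)>0$. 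The delicate case, and the point where the lower semicontinuity hypothesis really enters, is $z\in\mathrm{dom}\,f\setminus\mathrm{int}\,\mathrm{dom}\,f$: then $\nabla f(z)$ need not exist and the identity is unavailable. This is the main obstacle, and I would overcome it by approximation. For $\varepsilon\in(0,\tfrac12]$ set $x_\varepsilon=(1-\varepsilon)x+\varepsilon w$, $y_\varepsilon=(1-\varepsilon)y+\varepsilon w$ and $z_\varepsilon=\tfrac{x_\varepsilon+y_\varepsilon}{2}=(1-\varepsilon)z+\varepsilon w$. By (a), $z_\varepsilon\in\mathrm{int}\,\mathrm{dom}\,f$; all three points lie in $B'$ and $\|x_\varepsilon-y_\varepsilon\|=(1-\varepsilon)t\ge t/2$, so the interior case together with (b) yields $\tfrac12 f(x_\varepsilon)+\tfrac12 f(y_\varepsilon)-f(z_\varepsilon)\ge v_f(B',t/4)=:c>0$ uniformly in $\varepsilon$. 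Letting $\varepsilon\to0$, convexity of $f$ along $[x,w]$ and $[y,w]$ gives $\limsup_{\varepsilon\to0}f(x_\varepsilon)\le f(x)$ and $\limsup_{\varepsilon\to0}f(y_\varepsilon)\le f(y)$, while lower semicontinuity gives $\liminf_{\varepsilon\to0}f(z_\varepsilon)\ge f(z)$; hence
\[
c\ \le\ \limsup_{\varepsilon\to0}\Big(\tfrac12 f(x_\varepsilon)+\tfrac12 f(y_\varepsilon)-f(z_\varepsilon)\Big)\ \le\ \tfrac12 f(x)+\tfrac12 f(y)-f(z).
\]
In either case $\tfrac12 f(x)+\tfrac12 f(y)-f(z)$ is bounded below by a positive quantity depending only on $B$ and $t$, whence $\rho_B(t)>0$ and $f$ is uniformly convex on bounded sets. (If $\mathrm{dom}\,f=E$, as in every application in this paper, the boundary case never arises and the proof reduces to the two displayed identities.)
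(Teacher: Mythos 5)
The paper does not prove this proposition at all: it is imported verbatim, with a citation, from Butnariu--Iusem--Z\u{a}linescu \cite{p11.1}, so there is no in-paper argument to compare yours against. Judged on its own, your proof is correct and is a clean, self-contained reconstruction of the cited result. Both implications rest on the right comparisons: the midpoint identity $\tfrac12 f(x)+\tfrac12 f(y)-f(z)=\tfrac12 D_f(x,z)+\tfrac12 D_f(y,z)$ together with $D_f(z,x)\ge 0$ gives $v_f(B,t)\ge 2\rho_{B'}(t)$ in one direction and $\rho_B(t)\ge v_f(B',t/4)$ in the other, and the $\varepsilon$-shift toward an interior point $w$ is exactly where the lower semicontinuity hypothesis is consumed, which you identify correctly. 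Two small points are worth making explicit. First, the reduction of uniform convexity to its midpoint version is where the paper's actual gauge $\rho_r(t)=\inf_\alpha\frac{\alpha f(x)+(1-\alpha)f(y)-f(\alpha x+(1-\alpha)y)}{\alpha(1-\alpha)}$ enters; you invoke it as standard, and it is, but it has a one-line proof you could include: for $\alpha\le\tfrac12$, writing $\alpha x+(1-\alpha)y=2\alpha\,\tfrac{x+y}{2}+(1-2\alpha)y$ and using convexity yields a numerator at least $2\alpha\,\delta_B(t)$, hence $\rho_B(t)\ge 2\delta_B(t)$ (and trivially $\rho_B(t)\le 4\delta_B(t)$). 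Second, your backward direction presupposes $\mathrm{int}\,\mathrm{dom}\,f\neq\emptyset$ in order to choose $w$; this is harmless here because the paper's definitions of $D_f$, $v_f(B,t)$ and total convexity already require a nonempty interior to be non-vacuous (and in the theorem where the proposition is used, $f$ is finite on all of $E$), but it should be stated as a standing assumption rather than left implicit.
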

\begin{lem}\cite{p21}
If $x\in int\;dom f$, then the following statements are equivalent:
\begin{itemize}
  \item [{\rm(i)}] The function f is totally convex at x.
  \item [{\rm(ii)}] For any sequence $\{y_n\}\subset\;dom f$,
  \end{itemize}
\begin{equation*}
\displaystyle\lim_{n\rightarrow +\infty}D_f(y_n,x)=0\Rightarrow \displaystyle\lim_{n\rightarrow +\infty}\|y_n-x\|=0.
\end{equation*}
\end{lem}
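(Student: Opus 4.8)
The plan is to prove the two implications separately, with a scaling estimate for the modulus of total convexity doing most of the work. First I would record that, for fixed $x\in int\,dom\,f$, the function $y\mapsto D_f(y,x)$ is convex on $dom\,f$ (it differs from the convex function $f$ by an affine functional) and that $D_f(x,x)=0$. From this I would derive the key inequality: for every $t>0$ and every $c\ge 1$,
\[
v_f(x,ct)\ \ge\ c\,v_f(x,t).
\]
Indeed, given $z\in dom\,f$ with $\|z-x\|=ct$, the point $w:=(1-\tfrac1c)x+\tfrac1c z=x+\tfrac1c(z-x)$ lies in $dom\,f$ (convexity of the domain) and satisfies $\|w-x\|=t$; convexity of $D_f(\cdot,x)$ then gives $D_f(w,x)\le(1-\tfrac1c)D_f(x,x)+\tfrac1c D_f(z,x)=\tfrac1c D_f(z,x)$, so $D_f(z,x)\ge c\,D_f(w,x)\ge c\,v_f(x,t)$, and taking the infimum over admissible $z$ proves the estimate. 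I would also use the trivial bound $D_f(y,x)\ge v_f(x,\|y-x\|)$, valid for every $y\in dom\,f$ by definition of the infimum.

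For (i) $\Rightarrow$ (ii), assume $f$ is totally convex at $x$ and $\{y_n\}\subset dom\,f$ with $D_f(y_n,x)\to 0$. Arguing by contradiction, if $\|y_n-x\|\not\to 0$ there exist $\varepsilon>0$ and a subsequence with $\|y_{n_k}-x\|\ge\varepsilon$. Applying the scaling estimate with $c=\|y_{n_k}-x\|/\varepsilon\ge 1$ gives
\[
D_f(y_{n_k},x)\ \ge\ v_f\big(x,\|y_{n_k}-x\|\big)\ \ge\ \frac{\|y_{n_k}-x\|}{\varepsilon}\,v_f(x,\varepsilon)\ \ge\ v_f(x,\varepsilon)\ >\ 0,
\]
where $v_f(x,\varepsilon)>0$ because $f$ is totally convex at $x$. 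This contradicts $D_f(y_n,x)\to 0$, hence $\|y_n-x\|\to 0$.

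For (ii) $\Rightarrow$ (i), again argue by contradiction: if $f$ is not totally convex at $x$, there is $t_0>0$ with $v_f(x,t_0)=0$. In particular the set $\{y\in dom\,f:\|y-x\|=t_0\}$ is nonempty (otherwise the infimum would be $+\infty$), so there is a sequence $\{y_n\}$ in this set with $D_f(y_n,x)\to 0$. Then (ii) forces $\|y_n-x\|\to 0$, contradicting $\|y_n-x\|=t_0>0$. Therefore $v_f(x,t)>0$ for every $t>0$, i.e.\ $f$ is totally convex at $x$.

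I do not expect a genuine obstacle here. The one point that needs care is the scaling inequality $v_f(x,ct)\ge c\,v_f(x,t)$ for $c\ge 1$, together with the observation that in the proof of (i) $\Rightarrow$ (ii) one cannot simply invoke monotonicity of $v_f(x,\cdot)$ in $t$ (not available a priori): instead one rescales the possibly large quantity $\|y_{n_k}-x\|$ down to the fixed level $\varepsilon$ and uses the estimate above.
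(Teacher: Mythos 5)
Your proof is correct, and the key scaling inequality $v_f(x,ct)\ge c\,v_f(x,t)$ for $c\ge 1$ is exactly the standard property on which the proof in the cited literature rests. Note that the paper itself offers no proof of this lemma --- it is quoted verbatim from \cite{p21} --- so there is nothing internal to compare against; your argument is essentially the one found in that reference (and in Butnariu--Iusem), and both implications, including the nonemptiness observation needed for (ii) $\Rightarrow$ (i), are handled correctly.
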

Recall that the function $f$ is called sequentially consistent \cite{p12}, if for any two sequences $\{x_n\}_{n\in \mathbb{N}}$ and $\{y_n\}_{n\in \mathbb{N}}$ in $E$ such that $\{x_n\}_{n\in \mathbb{N}}$ is bounded, then
\begin{equation*}
\displaystyle\lim_{n\rightarrow +\infty}D_f(y_n,x_n)=0\Rightarrow \displaystyle\lim_{n\rightarrow +\infty}\|y_n-x_n\|=0.
\end{equation*}
\begin{lem}\label{qpocv} \cite{p11}
If dom f contains at least two points, then the function f is totally convex on bounded sets if and only if the function f is sequentially consistent.
\end{lem}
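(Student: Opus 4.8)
The plan is to prove the two implications separately, in each case by contraposition, with the engine of the argument being a monotonicity property of the modulus of total convexity $t\mapsto v_f(x,t)$.

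First I would treat the implication ``$f$ sequentially consistent $\Rightarrow$ $f$ totally convex on bounded sets'' contrapositively. If $f$ is not totally convex on bounded sets, then there exist a bounded set $B\subseteq E$ and a number $t_0>0$ with $v_f(B,t_0)=0$ (recall $v_f\ge 0$). Unwinding $v_f(B,t_0)=\inf\{v_f(x,t_0):x\in B\cap\mathrm{int\,dom}\,f\}$, I can choose $\{x_n\}\subseteq B\cap\mathrm{int\,dom}\,f$ with $v_f(x_n,t_0)\to 0$; since $v_f(x_n,t_0)<\infty$ for $n$ large, the set $\{y\in\mathrm{dom}\,f:\|y-x_n\|=t_0\}$ is nonempty, so I may pick $y_n$ in it with $D_f(y_n,x_n)\le v_f(x_n,t_0)+\tfrac1n$. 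Then $\{x_n\}$ is bounded, $D_f(y_n,x_n)\to0$, yet $\|y_n-x_n\|=t_0\not\to0$, which contradicts sequential consistency.

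For the converse I would first record the auxiliary fact that, for each fixed $x\in\mathrm{int\,dom}\,f$, the map $t\mapsto v_f(x,t)$ is nondecreasing on $[0,\infty)$ (in fact $t\mapsto v_f(x,t)/t$ is nondecreasing). This uses only convexity of $f$: given $y\in\mathrm{dom}\,f$ with $\|y-x\|=t>s>0$, put $\lambda:=s/t$ and $z:=(1-\lambda)x+\lambda y$; then $z\in\mathrm{dom}\,f$ by convexity, $\|z-x\|=s$, and since $z-x=\lambda(y-x)$ the inequality $f(z)\le(1-\lambda)f(x)+\lambda f(y)$ gives $D_f(z,x)\le\lambda D_f(y,x)$, hence $D_f(y,x)\ge\tfrac{t}{s}D_f(z,x)\ge\tfrac{t}{s}\,v_f(x,s)\ge v_f(x,s)$; taking the infimum over admissible $y$ yields $v_f(x,t)\ge v_f(x,s)$. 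Now suppose $f$ is totally convex on bounded sets and let $\{x_n\}\subseteq\mathrm{int\,dom}\,f$ be bounded and $\{y_n\}\subseteq\mathrm{dom}\,f$ satisfy $D_f(y_n,x_n)\to0$ while $\|y_n-x_n\|\not\to0$. Passing to a subsequence, there is $\varepsilon>0$ with $\|y_{n_k}-x_{n_k}\|\ge\varepsilon$ for all $k$; set $B:=\{x_n:n\in\mathbb{N}\}$, a bounded set. Using $D_f(y,x)\ge v_f(x,\|y-x\|)$ together with the monotonicity above,
\[
D_f(y_{n_k},x_{n_k})\ \ge\ v_f\big(x_{n_k},\|y_{n_k}-x_{n_k}\|\big)\ \ge\ v_f(x_{n_k},\varepsilon)\ \ge\ v_f(B,\varepsilon)\ >\ 0,
\]
where the last inequality is total convexity on bounded sets applied to $B$ and $\varepsilon$. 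This contradicts $D_f(y_n,x_n)\to0$, so $\|y_n-x_n\|\to0$ and $f$ is sequentially consistent.

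The genuinely routine parts are the convexity computation behind $D_f(z,x)\le\lambda D_f(y,x)$ and the $\tfrac1n$-selection of the approximating sequences. The point that needs care is domain bookkeeping: $D_f(\cdot,\cdot)$ is defined only on $\mathrm{dom}\,f\times\mathrm{int\,dom}\,f$, so the first arguments must always stay in $\mathrm{dom}\,f$ and the second arguments in $\mathrm{int\,dom}\,f$, and one must check that each infimum defining some $v_f(x,t)$ or $v_f(B,t)$ is taken over a nonempty set; the standing hypothesis that $\mathrm{dom}\,f$ contains at least two points is the usual non-degeneracy convention that keeps the moduli $v_f$ from being vacuous.
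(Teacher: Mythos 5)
Your argument is correct. Note that the paper itself offers no proof of this lemma --- it is quoted from Butnariu--Iusem \cite{p11} --- and what you have written is essentially the standard argument from that source: the forward direction by extracting an almost-minimizing sequence from $v_f(B,t_0)=0$, and the converse via the monotonicity of $t\mapsto v_f(x,t)$ obtained from the convexity estimate $D_f((1-\lambda)x+\lambda y,\,x)\le\lambda D_f(y,x)$. Your domain bookkeeping (finiteness of $v_f(x_n,t_0)$ guaranteeing the spheres $\{y\in\mathrm{dom}\,f:\|y-x_n\|=t_0\}$ are nonempty, and the first and second arguments of $D_f$ staying in $\mathrm{dom}\,f$ and $\mathrm{int\,dom}\,f$ respectively) is exactly the care the statement requires, so there is nothing to correct.
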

\begin{lem}\cite{p22}\label{qwpo}
Let $f:E\rightarrow \mathbb{R}$ be a $G\hat{a}teaux$ differentiable and totally convex function. If $x_1\in E$ and the sequence $\{D_f(x_n,x_1)\}$ is bounded, then the sequence $\{x_n\}$ is also bounded.
\end{lem}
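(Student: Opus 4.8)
The plan is to argue by contradiction, using the modulus of total convexity $v_f(x_1,\cdot)$ of $f$ at the point $x_1$. Since $f$ is real-valued on all of $E$, we have $\mathrm{dom}\,f=E$, so $x_1\in\mathrm{int}\,\mathrm{dom}\,f$ and $\nabla f(x_1)$ is well defined; moreover, $f$ is convex (this is implicit in total convexity, as $D_f$ is only defined for convex $f$), and total convexity gives $v_f(x_1,t)>0$ for every $t>0$ — in the argument below only $v_f(x_1,1)>0$ will actually be used.

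First I would record the trivial estimate
\[
D_f(x_n,x_1)\ \ge\ v_f\bigl(x_1,\|x_n-x_1\|\bigr),
\]
which is immediate from the definition of $v_f$ since $x_n\in\mathrm{dom}\,f$ and $\|x_n-x_1\|$ is precisely the prescribed distance. The crux is then a scaling lower bound for the modulus: for every $t>0$ and every $\lambda\ge 1$,
\[
v_f(x_1,\lambda t)\ \ge\ \lambda\, v_f(x_1,t).
\]
To see this, pick any $y\in E$ with $\|y-x_1\|=\lambda t$ and set $z:=x_1+\tfrac1\lambda(y-x_1)$, so that $\|z-x_1\|=t$. The map $w\mapsto D_f(w,x_1)=f(w)-f(x_1)-\langle\nabla f(x_1),w-x_1\rangle$ is convex (it is $f$ minus an affine functional) and vanishes at $w=x_1$, hence
\[
D_f(z,x_1)=D_f\!\left(\tfrac1\lambda y+\bigl(1-\tfrac1\lambda\bigr)x_1,\ x_1\right)\le\tfrac1\lambda D_f(y,x_1),
\]
so $D_f(y,x_1)\ge\lambda D_f(z,x_1)\ge\lambda v_f(x_1,t)$; taking the infimum over all such $y$ gives the claimed inequality. (Equivalently, $t\mapsto v_f(x_1,t)/t$ is nondecreasing on $(0,\infty)$.)

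Now suppose, for contradiction, that $\{x_n\}$ is unbounded. Then there is a subsequence $\{x_{n_k}\}$ with $\|x_{n_k}-x_1\|\to\infty$, and after discarding finitely many terms we may assume $\|x_{n_k}-x_1\|\ge 1$ for all $k$. Combining the two displayed inequalities with $t=1$ and $\lambda=\|x_{n_k}-x_1\|$ gives
\[
D_f(x_{n_k},x_1)\ \ge\ v_f\bigl(x_1,\|x_{n_k}-x_1\|\bigr)\ \ge\ \|x_{n_k}-x_1\|\cdot v_f(x_1,1),
\]
and since $v_f(x_1,1)>0$ the right-hand side tends to $+\infty$ as $k\to\infty$. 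This contradicts the boundedness of $\{D_f(x_n,x_1)\}$, so $\{x_n\}$ must be bounded.

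I do not anticipate a serious obstacle: the only genuinely substantive step is promoting total convexity \emph{at the single point} $x_1$ to the scaling inequality for $v_f(x_1,\cdot)$ via convexity of $D_f(\cdot,x_1)$, after which one need only observe that passing to a subsequence with $\|x_{n_k}-x_1\|\ge 1$ legitimizes the choice $\lambda\ge 1$. Everything else is bookkeeping with the definitions of $D_f$ and $v_f$.
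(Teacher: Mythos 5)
Your argument is correct: the paper states this lemma without proof, citing Reich--Sabach, and your derivation via the scaling inequality $v_f(x_1,\lambda t)\ge\lambda\,v_f(x_1,t)$ for $\lambda\ge1$ (obtained from convexity of $w\mapsto D_f(w,x_1)$) is exactly the standard argument in that reference. No gaps; the only hypotheses used are total convexity at the single point $x_1$ and $\mathrm{dom}\,f=E$, both available here.
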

\begin{lem}\cite{p24}\label{fdyuq}
Let $f:E\rightarrow \mathbb{R}$ be a Legendre function such that $\nabla f^*$ is bounded on bounded subsets of $int\,dom f^*$. Let $x_1\in E$ and if $\{D_f(x_1,x_n)\}$ is bounded, then the sequence $\{x_n\}$ is bounded too.
\end{lem}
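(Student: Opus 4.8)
\medskip
\noindent\textbf{Sketch of a proof.}
The plan is to move the problem to the conjugate function $f^{*}$, where the two arguments of the Bregman distance change places, settle it there, and then transport boundedness back through $\nabla f^{*}$. First I would record the standard facts attached to a Legendre function on the reflexive space $E$: since $\partial f$ is single‑valued, Proposition \ref{aplenfg} gives that $f$ is G$\hat a$teaux differentiable on $E=int\,dom\,f$; moreover $f^{*}$ is again Legendre, $\nabla f=(\nabla f^{*})^{-1}$ and $ran\,\nabla f=int\,dom\,f^{*}$. Thus, putting $\xi_{n}:=\nabla f(x_{n})$, we have $\xi_{n}\in int\,dom\,f^{*}$ and $x_{n}=\nabla f^{*}(\xi_{n})$ for every $n$, and likewise $x_{1}=\nabla f^{*}(\nabla f(x_{1}))$.

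Next I would rewrite the given distance using the Fenchel--Young equality $f(x_{n})+f^{*}(\nabla f(x_{n}))=\langle x_{n},\nabla f(x_{n})\rangle$. Expanding $D_{f}$ and substituting this identity yields
\begin{equation*}
D_{f}(x_{1},x_{n})=f(x_{1})+f^{*}(\xi_{n})-\langle x_{1},\xi_{n}\rangle=D_{f^{*}}\!\big(\nabla f(x_{n}),\nabla f(x_{1})\big),
\end{equation*}
where the second equality is another direct expansion, now of $D_{f^{*}}$, combined with the Fenchel--Young equality for $x_{1}$. Hence the hypothesis says precisely that $\{D_{f^{*}}(\xi_{n},\xi)\}$ is bounded, where $\xi:=\nabla f(x_{1})$ is fixed and now sits in the \emph{second} slot — exactly the position handled by Lemma \ref{qwpo}.

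The core step is to pass from boundedness of $\{D_{f^{*}}(\xi_{n},\xi)\}$ to boundedness of $\{\xi_{n}\}$ in $E^{*}$. I would show that $\eta\mapsto D_{f^{*}}(\eta,\xi)=f^{*}(\eta)-\langle x_{1},\eta\rangle+f(x_{1})$ has bounded sublevel sets: since $f\colon E\to\mathbb{R}$ is a finite‑valued, lower semicontinuous, convex function it is bounded on bounded subsets of $E$, and by conjugate duality this forces $f^{*}$ to be strongly (super‑) coercive, so $f^{*}(\eta)-\langle x_{1},\eta\rangle\ge f^{*}(\eta)-\|x_{1}\|\,\|\eta\|\to+\infty$ as $\|\eta\|\to\infty$. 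Consequently $\{\eta:D_{f^{*}}(\eta,\xi)\le M\}$ is bounded, so $\{\xi_{n}\}=\{\nabla f(x_{n})\}$ is bounded; alternatively, one may simply invoke Lemma \ref{qwpo} for the G$\hat a$teaux differentiable, totally convex function $f^{*}$ and the sequence $\{\xi_{n}\}$ to get the same thing. Finally, since $\nabla f^{*}$ is bounded on bounded subsets of $int\,dom\,f^{*}$ and $x_{n}=\nabla f^{*}(\xi_{n})$, the sequence $\{x_{n}\}$ is bounded, which is the assertion.

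I expect the only genuinely non‑formal ingredient to be the coercivity used in the core step: one must be certain that bounded sublevel sets of $D_{f^{*}}(\cdot,\xi)$ are available under the stated hypotheses, and this is exactly where finiteness of $f$ on all of $E$ is used — a Legendre function with bounded domain would still have $\nabla f^{*}$ bounded on bounded sets while $f^{*}$ failed to be coercive, so that hypothesis cannot be dropped. Everything else is the conjugacy bookkeeping indicated above.
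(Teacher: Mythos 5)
The paper gives no proof of this lemma (it is quoted from Sabach \cite{p24}), so your argument can only be measured against the standard one; its architecture — pass to the conjugate via the identity $D_f(x_1,x_n)=D_{f^*}(\nabla f(x_n),\nabla f(x_1))$, bound $\{\nabla f(x_n)\}$ in $E^*$, then recover $x_n=\nabla f^*(\nabla f(x_n))$ using the boundedness of $\nabla f^*$ on bounded sets — is exactly the right one, and your conjugacy bookkeeping and final step are correct.

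The gap is in the core coercivity step. You assert that a finite-valued, lower semicontinuous, convex $f:E\to\mathbb{R}$ is automatically bounded on bounded subsets of $E$, and from this deduce by duality that $f^*$ is supercoercive. The first assertion is false in infinite dimensions: a continuous convex function on a Banach space need not be bounded on bounded sets (classical examples exist already on $\ell^2$), and since boundedness of $f$ on bounded sets is \emph{equivalent} to supercoercivity of $f^*$, you cannot obtain the latter for free from mere finiteness of $f$. The alternative you offer — invoking Lemma \ref{qwpo} for $f^*$ — also does not go through, because total convexity of $f^*$ is not among the hypotheses. What is actually needed, and what does follow from the stated hypotheses, is the weaker fact that the tilted function $\psi(\xi):=f^*(\xi)-\langle x_1,\xi\rangle$ is coercive, i.e.\ has bounded sublevel sets. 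Indeed $\psi^*(y)=f(y+x_1)$ has domain all of $E$ because $f$ is everywhere finite, and by the Moreau–Rockafellar criterion (see \cite{p31}) a proper, lower semicontinuous, convex function is coercive if and only if $0$ lies in the interior of the domain of its conjugate; hence the sublevel set $\{\xi:\psi(\xi)\le M-f(x_1)\}$ containing $\{\nabla f(x_n)\}$ is bounded. With this repair the rest of your argument stands, and your closing remark correctly identifies finiteness of $f$ on all of $E$ as the hypothesis doing the work — it is just used through coercivity of the tilted conjugate, not through supercoercivity of $f^*$.
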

Recall that the Bregman projection \cite{p8} with respect to $f$ of $x\in int\,dom f$ onto a nonempty, closed and convex set $C\subseteq int\,dom f$ is the unique vector $proj_C^f(x)\in C$ satisfying
\begin{equation*}
D_f\big(proj_C^f(x),x\big)=inf\{D_f(y,x):y\in C\}.
\end{equation*}
Similar to the metric projection in Hilbert spaces, the Bregman projection with respect to totally convex and G$\hat{\text{a}}$teaux differentiable functions has a variational characterization \cite[corollary 4.4, p. 23]{p12}.
\begin{lem}\label{rtmnw}\cite{p12}
Suppose that f is $G\hat{a}teaux$ differentiable and totally convex on $int\,dom f$. Let $x\in int\,dom f$ and $C\subseteq int\,dom f$ be a nonempty, closed and convex set. Then the following Bregamn projection conditions are equivalent:
\begin{itemize}
  \item [{\rm(i)}] $z_0=proj_C^f(x)$.
  \item [{\rm(ii)}] $z=z_0$ is the unique solution of the following variational inequality
\begin{equation*}
\langle z-y,\nabla f(x)-\nabla f(z)\rangle \geq0,\quad \forall y\in C.
\end{equation*}
\item [{\rm(iii)}]  $z=z_0$ is the unique solution of the following variational inequality
\begin{equation*}
D_f(y,z)+D_f(z,x)\leq D_f(y,x),\quad \forall y\in C.
\end{equation*}
\end{itemize}
\end{lem}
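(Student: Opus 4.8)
The plan is to prove the chain of implications $(i)\Rightarrow(ii)\Leftrightarrow(iii)\Rightarrow(i)$ together with the uniqueness assertions, using only convexity, G\^ateaux differentiability of $f$, the three point identity \eqref{awpon}, and the nonnegativity of $D_f$. A preliminary observation I would record first is that a totally convex function is strictly convex on $int\,dom\,f$: if $f$ were affine on some segment $[u,v]\subseteq int\,dom\,f$ with midpoint $m$, then $D_f(u,m)=0$ while $\|u-m\|>0$, contradicting that the modulus of total convexity $v_f(m,\cdot)$ is positive. Consequently $\nabla f$ is strictly monotone on $int\,dom\,f$, i.e. $\langle u-v,\nabla f(u)-\nabla f(v)\rangle>0$ whenever $u\neq v$. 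Existence and uniqueness of $proj_C^f(x)\in C$ are part of the definition recalled above; in what follows $z_0$ denotes this vector, and in (ii) and (iii) a ``solution'' is understood to be a vector of $C$.

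For $(i)\Rightarrow(ii)$, fix $y\in C$ and put $z_t:=z_0+t(y-z_0)$ for $t\in[0,1]$, which lies in $C$ by convexity of $C$ and in $int\,dom\,f$ since $C\subseteq int\,dom\,f$ and $int\,dom\,f$ is convex. The scalar function $\varphi(t):=D_f(z_t,x)=f(z_t)-f(x)-\langle\nabla f(x),z_t-x\rangle$ is convex on $[0,1]$ and, by $(i)$, is minimized at $t=0$, so its right derivative there is nonnegative. Differentiating and using that $f$ is G\^ateaux differentiable at $z_0$ gives $\varphi'(0^+)=\langle\nabla f(z_0)-\nabla f(x),y-z_0\rangle\geq0$, i.e. $\langle z_0-y,\nabla f(x)-\nabla f(z_0)\rangle\geq0$; hence $z_0$ solves the variational inequality of (ii). If $z_1\in C$ is another solution, substituting $y=z_1$ in the inequality for $z_0$ and $y=z_0$ in the inequality for $z_1$ and adding yields $\langle z_0-z_1,\nabla f(z_0)-\nabla f(z_1)\rangle\leq0$, so $z_1=z_0$ by strict monotonicity of $\nabla f$. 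Thus the solution of (ii) is unique and equals $z_0$.

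For $(ii)\Leftrightarrow(iii)$, I would apply \eqref{awpon} in the relabelled form $D_f(y,z)+D_f(z,x)-D_f(y,x)=\langle y-z,\nabla f(x)-\nabla f(z)\rangle$, valid for $y\in C$ and $z\in int\,dom\,f$. Hence, for a fixed $z\in C$, the inequality $D_f(y,z)+D_f(z,x)\leq D_f(y,x)$ holds for every $y\in C$ if and only if $\langle z-y,\nabla f(x)-\nabla f(z)\rangle\geq0$ for every $y\in C$. Since this equivalence holds candidate-by-candidate, the solution set of (iii) coincides with that of (ii), so (ii) and (iii) are equivalent and share the same (unique) solution. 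For $(iii)\Rightarrow(i)$, if $z\in C$ solves (iii), then discarding the nonnegative term $D_f(y,z)$ gives $D_f(z,x)\leq D_f(y,x)$ for all $y\in C$; thus $z$ minimizes $D_f(\cdot,x)$ over $C$, and by uniqueness of the Bregman projection $z=proj_C^f(x)=z_0$.

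I expect the only genuinely delicate point to be the derivative computation in $(i)\Rightarrow(ii)$: one must be sure that the whole segment $\{z_t\}$ stays in $int\,dom\,f$ (it does, by convexity of $int\,dom\,f$), that $t\mapsto f(z_t)$ has a right derivative at $0$ equal to $\langle\nabla f(z_0),y-z_0\rangle$ (this is the definition of the G\^ateaux gradient, the one-sided limit existing by convexity of $f$), and that minimality of the convex function $\varphi$ at the left endpoint of its interval forces $\varphi'(0^+)\geq0$. Everything else reduces to formal manipulation of \eqref{awpon} and to strict monotonicity of $\nabla f$, which I would either cite or derive from total convexity as in the preliminary observation above.
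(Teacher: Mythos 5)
Your proof is correct. Note that the paper itself offers no argument for this lemma: it is quoted verbatim from \cite{p12} (Butnariu--Resmerita, Corollary 4.4), so there is no in-paper proof to compare against. Your blind reconstruction is the standard one and all the steps check out: the first-variation computation giving $(i)\Rightarrow(ii)$ is sound (the segment $z_t$ stays in $C\subseteq int\,dom f$, the map $t\mapsto D_f(z_t,x)$ is convex, and its right derivative at $0$ is exactly $\langle \nabla f(z_0)-\nabla f(x),y-z_0\rangle$ by G\^ateaux differentiability); the equivalence $(ii)\Leftrightarrow(iii)$ is an immediate relabelling of the three point identity \eqref{awpon}, holding candidate-by-candidate so the solution sets coincide; and $(iii)\Rightarrow(i)$ follows by dropping $D_f(y,z)\geq 0$. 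Two small remarks. First, your derivation of strict monotonicity of $\nabla f$ can be shortened: the identity $D_f(u,v)+D_f(v,u)=\langle u-v,\nabla f(u)-\nabla f(v)\rangle$ gives $\langle u-v,\nabla f(u)-\nabla f(v)\rangle\geq v_f(v,\|u-v\|)>0$ directly from total convexity, without passing through strict convexity. Second, you correctly isolate that existence and uniqueness of $proj_C^f(x)$ are taken as part of the definition recalled in the paper; a fully self-contained treatment would also have to establish attainment of the infimum, which is where closedness of $C$ and lower semicontinuity/coercivity hypotheses on $f$ actually enter, but for the stated equivalence your reading is consistent with how the paper sets things up.
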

\begin{lem}\label{khecpo}\cite{3qq40}
  Let $C$ be a nonempty convex subset of $E$ and $f:C\rightarrow \mathbb{R}$ be a convex and subdifferentiable function on $C$. Then, $f$ attains its minimum at $x\in C$ if and only if $0\in \partial f(x) + N_C(x)$, where $N_C(x)$ is the normal cone of $C$ at $x$, that is
  \begin{equation*}
    N_C(x) := \{x^*\in E^* :\langle x-z, x^*\rangle \geq 0, \forall z\in C\} .
  \end{equation*}
\end{lem}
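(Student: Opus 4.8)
The plan is to establish the two implications separately: the ``if'' part is a one-line computation, and the ``only if'' part is an application of Fermat's rule together with the Moreau--Rockafellar subdifferential sum rule.

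\textbf{($\Leftarrow$)} Assume $0\in\partial f(x)+N_C(x)$, and pick $\xi\in\partial f(x)$ and $\eta\in N_C(x)$ with $\eta=-\xi$. By the definition of the subdifferential, $f(z)\ge f(x)+\langle z-x,\xi\rangle$ for every $z\in C$; by the definition of the normal cone, $\langle x-z,\eta\rangle\ge0$, that is $\langle z-x,\xi\rangle\ge0$, for every $z\in C$. Adding the two inequalities gives $f(z)\ge f(x)$ for all $z\in C$, so $f$ attains its minimum over $C$ at $x$. No difficulty arises here.

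\textbf{($\Rightarrow$)} Let $\iota_C$ denote the indicator function of $C$ (equal to $0$ on $C$ and $+\infty$ off $C$). First I would record the elementary fact that $\partial\iota_C(x)=N_C(x)$, which is immediate from the two defining inequalities: $\xi\in\partial\iota_C(x)$ exactly means $\langle y-x,\xi\rangle\le 0$ for all $y\in C$. Next, $x$ is a minimizer of $f$ over $C$ if and only if it is an unconstrained minimizer of $f+\iota_C$ on $E$, which by Fermat's rule is equivalent to $0\in\partial(f+\iota_C)(x)$. It then remains to identify $\partial(f+\iota_C)(x)$ with $\partial f(x)+\partial\iota_C(x)$: the inclusion $\supseteq$ is automatic from the subgradient inequalities, while the inclusion $\subseteq$ is the content of the Moreau--Rockafellar theorem and needs a qualification condition, which is met here because $f$ is finite and continuous on the convex set $C$. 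Combining these facts yields $0\in\partial f(x)+N_C(x)$.

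The only genuinely delicate point is the reverse inclusion of the sum rule in an infinite-dimensional reflexive Banach space; if one prefers to avoid invoking Moreau--Rockafellar, the same conclusion follows from a Hahn--Banach separation argument. Minimality gives $f^{\circ}(x,z-x)\ge0$ for every $z\in C$ (letting $t\downarrow 0$ in $f(x+t(z-x))\ge f(x)$, which is legitimate since $x+t(z-x)\in C$ by convexity). Hence the nonempty weak$^{*}$-closed convex set $\partial f(x)$ must meet $-N_C(x)$: otherwise a separating functional $d\in E$ would lie in the tangent cone $\overline{\mathbb{R}_+(C-x)}=N_C(x)^{\circ}$ while satisfying $\langle d,\xi\rangle<0$ for all $\xi\in\partial f(x)$, contradicting $f^{\circ}(x,d)=\max_{\xi\in\partial f(x)}\langle d,\xi\rangle\ge 0$. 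Any $\xi\in\partial f(x)\cap(-N_C(x))$ then witnesses $0=\xi+(-\xi)\in\partial f(x)+N_C(x)$. This is where the convexity and closedness of $C$ and the continuity of $f$ carry the proof.
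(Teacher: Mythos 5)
The paper offers no proof of this lemma at all: it is imported verbatim from van Tiel's book, so there is nothing internal to compare your argument against. Your ($\Leftarrow$) direction is complete and correct. The ($\Rightarrow$) direction is the standard Fermat-plus-sum-rule argument, but as written it contains a genuine gap: both of your routes silently use continuity (equivalently, local boundedness) of $f$ at $x$, which is not among the hypotheses --- the lemma assumes only that $f$ is convex and subdifferentiable on $C$. The Moreau--Rockafellar route needs a point of $C$ at which $f$ is continuous (this is exactly the qualification in the paper's Lemma~\ref{ehfbqw}), and you simply assert that ``$f$ is finite and continuous on $C$.'' The separation backup needs it twice over: strictly separating $\partial f(x)$ from $-N_C(x)$ by a vector $d\in E$ requires one of these two weak$^{*}$-closed convex sets to be weak$^{*}$-compact, i.e.\ $\partial f(x)$ bounded; the max-formula $f^{\circ}(x,\cdot)=\max_{\xi\in\partial f(x)}\langle \cdot,\xi\rangle$ (with an attained maximum rather than a possibly strict supremum) also holds only under continuity at $x$; and passing from $f^{\circ}(x,z-x)\ge 0$ for $z\in C$ to $f^{\circ}(x,d)\ge 0$ for $d$ in the closure of $\mathbb{R}_{+}(C-x)$ needs lower semicontinuity of $f^{\circ}(x,\cdot)$, which can fail for the same reason. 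In an infinite-dimensional space, $C$ may have empty interior, in which case no extension of $f$ by $+\infty$ is continuous anywhere and $\partial f(x)$ is typically unbounded, so none of these steps is automatic.

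The point worth noticing is that, under the paper's own definition of the subdifferential (the global subgradient inequality, with $f$ understood as $+\infty$ off $C$, so that $\partial f(x)=\{\xi\in E^{*}: f(y)\ge f(x)+\langle y-x,\xi\rangle\ \forall y\in C\}$), the forward implication needs none of this machinery: if $x$ minimizes $f$ over $C$, then $f(y)\ge f(x)+\langle y-x,0\rangle$ for all $y\in C$, so $0\in\partial f(x)$, and $0\in N_C(x)$ always, whence $0\in\partial f(x)+N_C(x)$. The nontrivial sum-rule content you are reconstructing is only needed where the lemma is actually applied (Step 7 of Theorem~\ref{asli}), and there it is supplied separately by Lemma~\ref{ehfbqw} for a function that is finite and continuous on all of $E$. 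So your architecture is the standard textbook one, but in the stated generality it proves a statement with a stronger hypothesis; either add the continuity assumption explicitly or replace the forward direction with the one-line observation above.
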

  \begin{lem}\label{ehfbqw}\cite{p14.1}
    If $f$ and $g$ are two convex functions on $E$, such that there is a point $x_0\in \text{dom}\:f \cap \text{dom}\:g$ where $f$ is continuous, then
    \begin{equation*}
      \partial(f+g)(x)=\partial f(x)+\partial g(x),\quad \forall x\in E.
    \end{equation*}
  \end{lem}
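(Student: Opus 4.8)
The plan is to recognize this as the Moreau--Rockafellar sum rule for subdifferentials and to prove it by a geometric Hahn--Banach separation in the product space $E\times\mathbb{R}$, with the continuity hypothesis at $x_0$ playing the role of the constraint qualification.

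First, the inclusion $\partial f(x)+\partial g(x)\subseteq\partial(f+g)(x)$ is routine: for $\xi\in\partial f(x)$ and $\eta\in\partial g(x)$ one simply adds the defining inequalities $f(y)\geq f(x)+\langle y-x,\xi\rangle$ and $g(y)\geq g(x)+\langle y-x,\eta\rangle$ to obtain $\xi+\eta\in\partial(f+g)(x)$. So all the work lies in the reverse inclusion. Fix $x\in E$ and $\zeta\in\partial(f+g)(x)$ (there is nothing to prove if this set is empty). Rearranging the subgradient inequality for $\zeta$ gives
\[
h_1(y):=f(y)-f(x)-\langle y-x,\zeta\rangle\;\geq\;g(x)-g(y)=:h_2(y),\qquad\forall y\in E,
\]
with $h_1$ convex and $h_2$ concave. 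Because $f$ is continuous at $x_0$, so is $h_1$; hence $x_0\in\mathrm{int}\,\mathrm{dom}\,h_1$ and the epigraph $A:=\mathrm{epi}\,h_1\subseteq E\times\mathbb{R}$ is a convex set with nonempty interior, and moreover every $(y,t)\in\mathrm{int}\,A$ satisfies $t>h_1(y)$. Setting $B:=\{(y,t)\in E\times\mathbb{R}:t\leq h_2(y)\}$ (convex, since $h_2$ is concave), the inequality $h_1\geq h_2$ forces $\mathrm{int}\,A\cap B=\emptyset$.

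Next I would apply the geometric Hahn--Banach theorem to separate $\mathrm{int}\,A$ from $B$, obtaining a nonzero $(\phi,s)\in E^{*}\times\mathbb{R}$ and $\alpha\in\mathbb{R}$ with $\langle y,\phi\rangle+st\geq\alpha$ on $A$ and $\langle y,\phi\rangle+st\leq\alpha$ on $B$. Letting $t\to+\infty$ along $A$ gives $s\geq 0$, and $s=0$ is impossible: in that case $\phi$ would be bounded below on a neighborhood of $x_0$, forcing $\phi=0$ and hence $(\phi,s)=(0,0)$. Normalizing $s=1$ and reading off the two inequalities at the graph points $t=h_i(y)$, then evaluating at $y=x$, pins down $\alpha=\langle x,\phi\rangle$; substituting back turns the inequalities into $g(y)\geq g(x)+\langle y-x,\phi\rangle$ and $f(y)\geq f(x)+\langle y-x,\zeta-\phi\rangle$ for all $y\in E$. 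Thus $\phi\in\partial g(x)$ and $\zeta-\phi\in\partial f(x)$, so $\zeta=(\zeta-\phi)+\phi$ exhibits the required decomposition.

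The part I expect to require the most care is the handling of the separating functional: verifying that $\mathrm{epi}\,h_1$ genuinely has nonempty interior purely from continuity of $f$ at the single point $x_0$, and excluding the degenerate normal direction $s=0$. This is exactly where the hypothesis that $x_0\in\mathrm{dom}\,f\cap\mathrm{dom}\,g$ with $f$ continuous at $x_0$ is indispensable --- dropping such a qualification condition makes the sum rule false in general.
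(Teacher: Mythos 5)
The paper offers no proof of this lemma: it is imported verbatim from the cited reference (Cioranescu's book) and used as a black box, so there is no in-paper argument to compare yours against. Your proposal is the standard Moreau--Rockafellar separation proof and it is essentially correct: the easy inclusion, the reduction to separating $\mathrm{epi}\,h_1$ from the hypograph-type set $B$, the use of continuity of $f$ at $x_0$ to guarantee $\mathrm{int}\,\mathrm{epi}\,h_1\neq\emptyset$, and the final reading-off of $\phi\in\partial g(x)$ and $\zeta-\phi\in\partial f(x)$ all go through. The one place where your wording is too loose is the exclusion of $s=0$: a nonzero linear functional \emph{can} be bounded below on a (bounded) neighborhood of $x_0$, so boundedness below alone does not force $\phi=0$. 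What does force it is the combination of the two sides of the separation: $\langle y,\phi\rangle\geq\alpha$ on a ball $B(x_0,\delta)\subseteq\mathrm{dom}\,f$ together with $\langle x_0,\phi\rangle\leq\alpha$ (from $x_0\in\mathrm{dom}\,g$) gives $\langle x_0,\phi\rangle=\alpha$, so $\langle v,\phi\rangle\geq0$ for all $\|v\|<\delta$ and hence, replacing $v$ by $-v$, $\phi=0$. With that sentence repaired the argument is complete and is, as far as one can tell, the same proof the cited source would give.
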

As before a function $h:C\times C\rightarrow (-\infty, +\infty]$, where $C\subseteq X$ is a closed convex
subset, such that $h(x,x)=0$ for all $x\in C$ is called a bifunction. Throughout this paper, we consider bifunctions with the following properties:\\
A1. $h$ is pseudomonotone, i.e., for all $x,y\in C$:
\begin{equation*}
  h(x,y)\geq 0 \Rightarrow h(y,x)\leq 0.
\end{equation*}
A2. $h$ is Bregman–Lipschitz-type continuous, i.e., there exist two positive
constants $c_1, c_2$, such that
\begin{equation*}
  h(x,y)+h(y,z)\geq h(x,z)-c_1D_f(y,x)-c_2D_f(z,y),\quad \forall x,y,z \in C,
\end{equation*}
where $f: X\rightarrow (-\infty, +\infty]$ is a Legendre function. The constants $c_1, c_2$ are called Bregman-Lipschitz coefficients with respect to $f$.\\
A3. $h$ is weakly continuous on $C\times C$, i.e. if $x,y\in C$ and $\{x_n\}$ and $\{y_n\}$ are two sequences in $C$ converging weakly to $x$ and $y$, respectively, then $h(x_n,y_n)\rightarrow h(x,y)$.\\
A4. $h(x,.)$ is convex, lower semicontinuous and subdifferentiable on $C$ for every fixed $x\in C$.\\
A5. for each $x, y, z \in C$, $\limsup_{t\downarrow 0} h(tx+(1-t)y,z)\leq h(y,z)$.

A bifunction $h$ is called monotone on $C$ if for all $x, y\in C$, $h(x, y) + h(y, x)\leq 0$. It is clear that any monotone bifunction is a pseudomonotone, but not vice versa. A mapping $A: C \rightarrow X^*$ is pseudomonotone if and only if the bifunction $h(x, y)=\langle A(x), y-x\rangle$ is pseudomonotone on $C$ (see \cite{p30qq}).
\begin{ex}
  Let $C\subseteq\mathbb{R}$, $k\leq -1$ and $f:\mathbb{R} \rightarrow \mathbb{R}$ be a Legendre function.
Define $h:C\times C\rightarrow \mathbb{R}$ as follows:
\begin{equation*}
  h(x,y)=|x-y|+k(x-y),\quad \forall x,y\in C.
\end{equation*}
\end{ex}
It is easy to check that the bifunction $h$ satisfies the conditions A1–A5.\\
Associated with the primal form (EP), its dual form is defined as follows \cite{p5qq,p16qq}:
\begin{equation}\label{sdpou}
  \text{Find}\quad y^*\in C \quad\text{such that}\quad h(x,y^*)\leq 0, \quad \forall x\in C.
\end{equation}
Let us denote by DEP($h$) the solution sets of Problem \eqref{sdpou}.
\begin{lem}\label{aspoxc}\cite{p14.52}
  If a bifunction h satisfies the conditions A1, A3-A5, then EP(h)=DEP(h) and EP(h) are closed and convex.
\end{lem}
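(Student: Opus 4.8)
The plan is to establish the three assertions in the order: (a) $EP(h)\subseteq DEP(h)$, (b) $DEP(h)\subseteq EP(h)$, and (c) $EP(h)$ is closed and convex, deriving (c) from the set identity proved in (a)–(b) together with condition A4. For (a), observe that if $x^*\in EP(h)$ then $h(x^*,y)\ge 0$ for every $y\in C$, so pseudomonotonicity A1 immediately gives $h(y,x^*)\le 0$ for every $y\in C$, i.e.\ $x^*\in DEP(h)$. This step uses nothing but A1.

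The inclusion (b) is the crux. Let $y^*\in DEP(h)$, so $h(x,y^*)\le 0$ for all $x\in C$, and fix an arbitrary $y\in C$. For $t\in(0,1)$ set $z_t:=ty+(1-t)y^*$, which lies in $C$ by convexity of $C$. Using $h(z_t,z_t)=0$ and the convexity of $h(z_t,\cdot)$ from A4,
\begin{equation*}
0=h(z_t,z_t)=h\big(z_t,\,ty+(1-t)y^*\big)\le t\,h(z_t,y)+(1-t)\,h(z_t,y^*)\le t\,h(z_t,y),
\end{equation*}
where the last inequality holds because $z_t\in C$ forces $h(z_t,y^*)\le 0$. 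Hence $h(z_t,y)\ge 0$ for every $t\in(0,1)$, so $\limsup_{t\downarrow 0}h(z_t,y)\ge 0$. On the other hand, A5 applied to the triple $(y,y^*,y)$ gives $\limsup_{t\downarrow 0}h\big(ty+(1-t)y^*,y\big)\le h(y^*,y)$. Combining the two yields $h(y^*,y)\ge 0$, and since $y\in C$ was arbitrary we conclude $y^*\in EP(h)$. (Extended-real values cause no trouble: whenever a relevant quantity equals $+\infty$ the inequality in question is trivially true.)

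For (c), the two inclusions give $EP(h)=DEP(h)=\bigcap_{x\in C}\{y\in C:\ h(x,y)\le 0\}$. For each fixed $x\in C$, condition A4 says $h(x,\cdot)$ is convex and lower semicontinuous on $C$, so its $0$-sublevel set $\{y\in C:\ h(x,y)\le 0\}$ is convex and closed; intersecting these sets over all $x\in C$ (all contained in the closed convex set $C$) preserves both properties, so $EP(h)$ is closed and convex. Here A3 is not needed for norm-closedness; it would additionally give weak closedness, which for a convex set is anyway equivalent to norm-closedness.

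I expect the main obstacle to be step (b): one must guess the correct convex combination $z_t=ty+(1-t)y^*$, then exploit the normalization $h(z_t,z_t)=0$ and convexity in the second slot to transfer the hypothesis $h(z_t,y^*)\le 0$ into the conclusion $h(z_t,y)\ge 0$, and only afterwards pass to the limit $t\downarrow 0$ via the one-sided hemicontinuity A5. Keeping the directions of all the inequalities straight, and tracking exactly which of A1, A4, A5 is used where, is the delicate part; steps (a) and (c) are essentially bookkeeping.
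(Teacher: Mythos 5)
The paper itself gives no proof of this lemma: it is stated as a quoted result from \cite{p14.52}, so there is nothing internal to compare against. Your argument is correct and is the standard one for this statement: $EP(h)\subseteq DEP(h)$ is immediate from pseudomonotonicity (A1); the reverse inclusion is the Minty-type argument using $h(z_t,z_t)=0$, convexity of $h(z_t,\cdot)$ from A4, and the hemicontinuity condition A5 to pass to the limit $t\downarrow 0$; and closedness and convexity of $EP(h)=DEP(h)=\bigcap_{x\in C}\{y\in C:h(x,y)\leq 0\}$ follow from the lower semicontinuity and convexity of each $h(x,\cdot)$ in A4. Your side remark is also accurate: with this route A3 is not actually needed (the cited source uses weak continuity to get closedness, whereas you get it from A4 directly), so your proof in fact establishes the conclusion under slightly weaker hypotheses than stated.
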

\begin{lem}\label{demnsd}\cite{p14.52}
Let $f:E\rightarrow \mathbb{R}$ be a convex, strongly coercive, lower semicontinuous, G$\hat{a}$teaux differentiable, and cofinite function. Let $h:C\times C\rightarrow (-\infty, \infty]$ be a function, such that $h(x, .)$ is proper, convex, and lower semicontinuous on $C$ for every fixed $x\in C$. Then, for every $\alpha \in(0,+\infty)$ and $x\in C$, there exists $z\in C$, such that
\begin{equation*}
z\in Argmin \big\{\alpha h(x_n,u)+D_f(u, x_n): u\in C\big\}
\end{equation*}
Furthermore, if $f$ is strictly convex, then this point is unique.
\end{lem}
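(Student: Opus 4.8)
The plan is to treat this as a direct‑method (Weierstrass) argument exploiting reflexivity of $E$. First I would fix the point $x\in C$ (the statement writes $x_n$) and $\alpha\in(0,+\infty)$, and introduce the objective $\psi:E\to(-\infty,+\infty]$ given by $\psi(u)=\alpha h(x,u)+D_f(u,x)$ for $u\in C$ and $\psi(u)=+\infty$ for $u\notin C$. Because $f$ is real‑valued and G\^ateaux differentiable on all of $E$, the map $u\mapsto D_f(u,x)=f(u)-f(x)-\langle \nabla f(x),u-x\rangle$ is well defined; since it differs from the convex function $f$ by an affine term it is convex, and it is lower semicontinuous because $f$ is. By hypothesis $h(x,\cdot)$ is proper, convex and lower semicontinuous, and $C$ is closed and convex, so $\psi$ is convex and lower semicontinuous; moreover $h(x,x)=0$ and $D_f(x,x)=0$ give $\psi(x)=0$, so $\psi$ is proper.

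The crucial step is coercivity of $\psi$. Since $h(x,\cdot)$ is a proper, lower semicontinuous convex function on the Banach space $E$, by the Fenchel--Moreau theorem it admits a continuous affine minorant: there exist $\xi\in E^*$ and $c\in\mathbb{R}$ with $h(x,u)\geq\langle\xi,u\rangle+c$ for all $u\in E$. Hence for every $u\in C$,
\[
\psi(u)\ \geq\ f(u)+\langle \alpha\xi-\nabla f(x),\,u\rangle+\bigl(\alpha c-f(x)+\langle \nabla f(x),x\rangle\bigr).
\]
Dividing by $\|u\|$ and letting $\|u\|\to\infty$, the strong coercivity of $f$ (which makes $f(u)/\|u\|\to+\infty$) dominates the fixed affine lower bound, so $\psi(u)/\|u\|\to+\infty$ and in particular $\psi(u)\to+\infty$ as $\|u\|\to\infty$. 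Equivalently, strong coercivity of $f$ forces $D_f(\cdot,x)\geq 0$ to be superlinear, and a superlinear term dominates the affine minorant of $\alpha h(x,\cdot)$. Consequently each sublevel set $\{\psi\leq\lambda\}$ is bounded; being also closed and convex it is weakly compact because $E$ is reflexive. As $\psi$ is convex and lower semicontinuous it is weakly lower semicontinuous, hence it attains its infimum on the nonempty sublevel set $\{\psi\leq 0\}$. This yields $z\in C$ with $z\in Argmin\{\alpha h(x,u)+D_f(u,x):u\in C\}$.

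For the uniqueness assertion, when $f$ is strictly convex the function $u\mapsto D_f(u,x)$ is strictly convex (again because it differs from $f$ only by an affine function), so $\psi$ is strictly convex on $C$. If $z_1\neq z_2$ both minimized $\psi$ over $C$ with common value $m$, then the midpoint $\tfrac12(z_1+z_2)\in C$ would satisfy $\psi\bigl(\tfrac12(z_1+z_2)\bigr)<\tfrac12\psi(z_1)+\tfrac12\psi(z_2)=m$, contradicting minimality; hence $z$ is unique.

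The main obstacle I anticipate is precisely the coercivity estimate: $h(x,\cdot)$ is only assumed proper convex lower semicontinuous and could a priori be unbounded below along some directions, so one must argue carefully that it cannot destroy coercivity. This is exactly what the continuous affine minorant plus the strong coercivity of $f$ take care of; the remaining ingredients---properness, convexity, lower semicontinuity and reflexivity-based attainment---are routine. Optionally one may also record the first‑order optimality condition at the minimizer using Lemma \ref{khecpo} together with the subdifferential sum rule of Lemma \ref{ehfbqw}, namely $0\in\alpha\,\partial\bigl(h(x,\cdot)\bigr)(z)+\nabla f(z)-\nabla f(x)+N_C(z)$, although this is not needed for the statement.
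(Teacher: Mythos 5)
Your proof is correct, but note that the paper itself offers no argument for this lemma: it is quoted verbatim from \cite{p14.52} (Eskandani--Raeisi--Rassias), so there is no in-paper proof to match. Your direct-method argument is sound and self-contained: extending $h(x,\cdot)$ by $+\infty$ off the closed convex set $C$ keeps it proper, convex and lower semicontinuous, so the continuous affine minorant exists on all of $E$; the strong coercivity of $f$ then makes $\psi$ supercoercive, its sublevel sets are bounded, closed and convex, hence weakly compact by the reflexivity of $E$ (a standing assumption of the paper), and weak lower semicontinuity of the convex lsc function $\psi$ gives attainment; strict convexity of $f$ passes to $D_f(\cdot,x)$ and yields uniqueness via the midpoint argument. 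The one respect in which your route differs from the usual one in the cited source is the treatment of the cofiniteness hypothesis: the standard proof characterizes the minimizer as $z$ with $\nabla f(x)\in \alpha\,\partial_2 h(x,\cdot)(z)+\nabla f(z)+N_C(z)$ and invokes surjectivity of $\nabla f+\partial g$ for cofinite $f$, which is exactly where $\mathrm{dom}\,f^*=E^*$ is used, whereas your Weierstrass argument never needs cofiniteness because strong coercivity of $f$ already implies it. This makes your proof slightly more elementary, at the cost of not producing the resolvent-type representation of $z$ that the optimality condition (your optional last remark, via Lemmas \ref{khecpo} and \ref{ehfbqw}) would supply and that the main theorem's Step 7 actually relies on.
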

\begin{lem}\label{swdczg}\cite{p14.52}
  Let $C$ be a nonempty closed convex subset of a reflexive Banach space $E$, and $f:E\rightarrow \mathbb{R}$ be a Legendre and strongly coercive function. Suppose that $h:C\times C\rightarrow \mathbb{R}$ be a bifunction satisfying A1–A4. For the arbitrary sequences $\{x_n\}\subseteq C$ and $\{\alpha_n\}\subseteq (0,+\infty)$, let $\{w_n\}$ and $\{z_n\}$ be sequences generated by
  \begin{align}\label{qpvbgh}
  \begin{cases}
    t_n= argmin \big\{\alpha_nh(x_n,u)+D_f(u, x_n): u\in C\big\},\\
    v_n= argmin \big\{\alpha_nh(t_n,u)+D_f(u, x_n): u\in C\big\}.
  \end{cases}
  \end{align}
  Then, for all $x^*\in EP(h)$
  \begin{equation*}
    D_f(x^*,v_n)\leq D_f(x^*,x_n)-(1-\alpha_n c_1)D_f(t_n,x_n)-(1-\alpha_n c_2)D_f(v_n,t_n).
  \end{equation*}
\end{lem}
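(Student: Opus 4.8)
\emph{Proof plan.} The idea is to convert each \emph{argmin} in \eqref{qpvbgh} into a variational inequality by combining the optimality condition of Lemma~\ref{khecpo} with the subdifferential sum rule of Lemma~\ref{ehfbqw}, and then to assemble the stated estimate out of the three point identity \eqref{awpon}, the pseudomonotonicity assumption (A1), and the Bregman--Lipschitz-type inequality (A2).

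First I would analyse $v_n$. Set $\psi_n(u):=\alpha_n h(t_n,u)+D_f(u,x_n)$, which is convex and subdifferentiable on $C$ by (A4) and the convexity of $u\mapsto D_f(u,x_n)$, whose gradient at $u$ is $\nabla f(u)-\nabla f(x_n)$. Since $v_n$ minimizes $\psi_n$ over $C$, Lemma~\ref{khecpo} gives $0\in\partial\psi_n(v_n)+N_C(v_n)$; and since $D_f(\cdot,x_n)$ is finite, convex and continuous on all of $E$, Lemma~\ref{ehfbqw} yields $\partial\psi_n(v_n)=\alpha_n\,\partial_2 h(t_n,v_n)+\{\nabla f(v_n)-\nabla f(x_n)\}$, where $\partial_2 h(t_n,v_n)$ denotes the subdifferential of $u\mapsto h(t_n,u)$ at $v_n$. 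Hence there are $\xi_n\in\partial_2 h(t_n,v_n)$ and $w_n\in N_C(v_n)$ with $\alpha_n\xi_n+\nabla f(v_n)-\nabla f(x_n)+w_n=0$. Reading $\langle z-v_n,w_n\rangle\le0$ off the definition of $N_C(v_n)$ for $z\in C$, and then using $\langle z-v_n,\xi_n\rangle\le h(t_n,z)-h(t_n,v_n)$, I obtain
\begin{equation}\label{sxa}
\langle z-v_n,\nabla f(x_n)-\nabla f(v_n)\rangle\le\alpha_n\big(h(t_n,z)-h(t_n,v_n)\big),\qquad\forall z\in C.
\end{equation}
The identical reasoning applied to $t_n$, the minimizer over $C$ of $u\mapsto\alpha_n h(x_n,u)+D_f(u,x_n)$, gives
\begin{equation}\label{sxb}
\langle z-t_n,\nabla f(x_n)-\nabla f(t_n)\rangle\le\alpha_n\big(h(x_n,z)-h(x_n,t_n)\big),\qquad\forall z\in C.
\end{equation}

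Next I would put $z=x^*$ in \eqref{sxa} and $z=v_n$ in \eqref{sxb} (both admissible, since $x^*\in EP(h)\subseteq C$ and $v_n\in C$) and rewrite the left-hand sides with the three point identity \eqref{awpon}:
\begin{align*}
\langle x^*-v_n,\nabla f(x_n)-\nabla f(v_n)\rangle&=D_f(x^*,v_n)+D_f(v_n,x_n)-D_f(x^*,x_n),\\
\langle v_n-t_n,\nabla f(x_n)-\nabla f(t_n)\rangle&=D_f(v_n,t_n)+D_f(t_n,x_n)-D_f(v_n,x_n).
\end{align*}
Since $x^*\in EP(h)$ forces $h(x^*,t_n)\ge0$, pseudomonotonicity (A1) gives $h(t_n,x^*)\le0$, so \eqref{sxa} becomes $D_f(x^*,v_n)+D_f(v_n,x_n)-D_f(x^*,x_n)\le-\alpha_n h(t_n,v_n)$, while \eqref{sxb} becomes $\alpha_n\big(h(x_n,t_n)-h(x_n,v_n)\big)\le D_f(v_n,x_n)-D_f(v_n,t_n)-D_f(t_n,x_n)$. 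Finally I would apply (A2) to the triple $(x_n,t_n,v_n)$, giving $-\alpha_n h(t_n,v_n)\le\alpha_n\big(h(x_n,t_n)-h(x_n,v_n)\big)+\alpha_n c_1 D_f(t_n,x_n)+\alpha_n c_2 D_f(v_n,t_n)$; substituting the preceding bound for $\alpha_n\big(h(x_n,t_n)-h(x_n,v_n)\big)$ and then feeding the result into the estimate for $D_f(x^*,v_n)$, the two copies of $D_f(v_n,x_n)$ cancel and rearranging yields
\begin{equation*}
D_f(x^*,v_n)\le D_f(x^*,x_n)-(1-\alpha_n c_1)D_f(t_n,x_n)-(1-\alpha_n c_2)D_f(v_n,t_n),
\end{equation*}
which is the claim.

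The one genuinely delicate step is the first paragraph: verifying that the hypotheses on $f$ (Legendre, hence differentiable, and finite and continuous on $E$) make Lemma~\ref{ehfbqw} applicable, so that $\partial\psi_n(v_n)$ splits \emph{exactly} into the $\partial_2 h$ term plus $\nabla f(v_n)-\nabla f(x_n)$, and keeping the signs straight when the normal-cone membership $w_n\in N_C(v_n)$ is turned into \eqref{sxa}--\eqref{sxb}. Once those are in hand, the remainder is routine bookkeeping with \eqref{awpon}, (A1) and (A2).
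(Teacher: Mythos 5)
Your argument is correct: the optimality conditions via Lemmas~\ref{khecpo} and \ref{ehfbqw} yield exactly the two variational inequalities \eqref{sxa}--\eqref{sxb} with the signs as you state them, and the combination with the three point identity \eqref{awpon}, pseudomonotonicity (A1) applied through $h(x^*,t_n)\geq 0$, and the Bregman--Lipschitz inequality (A2) on the triple $(x_n,t_n,v_n)$ assembles into the claimed estimate with the two $D_f(v_n,x_n)$ terms cancelling as you describe. The paper itself gives no proof of Lemma~\ref{swdczg} --- it is quoted from \cite{p14.52} --- but your route is the standard one from that source, and indeed it is the same optimality-condition-to-variational-inequality machinery the authors redeploy in Step~7 of the proof of Theorem~\ref{asli} to derive \eqref{dkvbqw}; the only cosmetic mismatch is that the lemma's statement names the sequences $\{w_n\}$, $\{z_n\}$ while actually defining $\{t_n\}$, $\{v_n\}$, a typo in the paper rather than a gap in your argument.
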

Let $E$ be a Banach space and let $B_r=\{z\in E:\|z\|\leq r\}$ for all $r >0$. Then a function $g: E \rightarrow
\mathbb{R}$ is said to be uniformly convex on bounded subsets of $E$ \cite{p31} if $\rho_r(t)>0$ for all $r,t>0$, where $\rho_r:[0,+\infty)\rightarrow [0,+\infty]$ is defined by
\begin{equation*}
 \rho_r(t)=\displaystyle\inf_{x,y\in B_r,\|x-y\|=t,\alpha \in (0,1)}\frac{\alpha g(x)+(1-\alpha)g(y)-g(\alpha x+(1-\alpha)y)}{\alpha(1-\alpha)}
\end{equation*}
for all $t\geq0$. The function $\rho_r$ is called the gauge of uniform convexity of $g$.

Let $f: E \rightarrow(-\infty,+\infty]$ be a proper, lower semicontinuous and convex function. Let $C$ be a nonempty, closed and convex subset of \textit{int\,dom} $f$ and $T:C\rightarrow C$ be a mapping. Now $T$ is said to be Bregman quasi-nonexpansive, if the following conditions are satisfied:
  \begin{itemize}
  \item [{\rm(i)}] $F(T)$ is nonempty,
  \item [{\rm(ii)}] $D_f(p,Tx)\leq D_f(p,x),\quad \forall x\in C, p\in F(T)$.
  \end{itemize}

A mapping $T:C\rightarrow C$ is said to to be Bregman relatively nonexpansive if the following conditions are satisfied:
  \begin{itemize}
  \item [{\rm(i)}] $F(T)$ is nonempty,
  \item [{\rm(ii)}] $D_f(p,Tx)\leq D_f(p,x),\quad\forall x\in C, p\in F(T)$,
  \item [{\rm(iii)}] $\hat{F}(T)=F(T)$.
  \end{itemize}

A mapping $T:C\rightarrow C$ is said to to be Bregman weak relatively nonexpansive if the following conditions are satisfied:
\begin{itemize}
  \item [{\rm(i)}] $F(T)$ is nonempty,
  \item [{\rm(ii)}] $D_f(p,Tx)\leq D_f(p,x),\quad\forall x\in C, p\in F(T)$,
  \item [{\rm(iii)}] $\tilde{F}(T)=F(T)$.
  \end{itemize}
  It is clear that any Bregman relatively nonexpansive mapping is a Bregman quasi nonexpansive mapping. It is also obvious that every Bregman relatively nonexpansive mapping is a Bregman weak relatively nonexpansive mapping, but the converse in not
true in general; see, for example, \cite{3qq15}. Indeed, for any mapping $T : C\rightarrow C$, we have
$F(T)\subset \tilde{F}(T)\subset \hat{F}(T)$. If $T$ is Bregman relatively nonexpansive, then $F(T)= \tilde{F}(T)=\hat{F}(T)$.

Let $C$ be a nonempty, closed and convex subset of \textit{int dom} $f$. An operator $T:C\rightarrow int\,dom f$ is said to be Bregman strongly nonexpansive with respect to a nonempty $\hat{F}(T)$, if
\begin{equation*}
D_f(y,Tx)\leq D_f(y,x),\quad \forall x\in C,y\in \hat{F}(T),
\end{equation*}
and for any bounded sequence $\{x_n\}\subseteq C$ with
\begin{equation*}
\displaystyle\lim_{n\rightarrow \infty}\big(D_f(y,x_n)-D_f(y,Tx_n)\big)=0,
\end{equation*}
it follows that
\begin{equation*}
\displaystyle\lim_{n\rightarrow \infty}D_f(Tx_n,x_n)=0.
\end{equation*}

A mapping $B:E\rightarrow2^{E^*}$ is called Bregman inverse strongly monotone on the set $C$, if $C\cap (int\,dom f)\neq \emptyset$ and for any $x,y\in C\cap (int\,dom f),\xi\in Bx$ and $\eta \in By$, we have that
\begin{equation*}
\big\langle \xi-\eta,\;\nabla f^*(\nabla f(x)-\xi)-\nabla f^*(\nabla f(y)-\eta)\big\rangle \geq0.
\end{equation*}

Let $B:E\rightarrow 2^{E^*}$ be a mapping. Then the mapping defined by
\begin{equation*}
B_\lambda ^f:=\nabla f^*\circ(\nabla f-\lambda B):E\rightarrow E
\end{equation*}
is called an anti-resolvent associated with $B$ and $\lambda$ for any $\lambda>0$.

Suppose that $A$ is a mapping of $E$ into $2^{E^{*}}$ for the real reflexive Banach space $E$. The effective domain of $A$ is denoted by $dom(A)$, that is, $dom(A)=\lbrace x\in E: Ax\neq \emptyset \rbrace$. A multi-valued mapping $A$ on $E$ is said to be monotone if $\langle x-y, u^{*}-v^{*}\rangle\geq 0 $ for all $ x,y \in dom(A), u^{*} \in Ax$ and $v^{*}\in Ay$. A monotone operator $A$ on $E$ is said to be maximal if graph $A$, the graph of $A$, is not a proper subset of the graph of any monotone operator on $E$.

Let $E$ be a real reflexive Banach space, $f:E\rightarrow (-\infty,+\infty]$ be a uniformly Fr$\acute{\text{e}}$chet differentiable and bounded on bounded subsets of $E$, then for any $\lambda >0$ the resolvent of $A$   defined by
\begin{equation*}
Res_A^f(x)=(\nabla f+\lambda A)^{-1}\circ \nabla f(x).
\end{equation*}
is a single-valued Bregman quasi-nonexpansive mapping from $E$ onto $dom(A)$ and $F(Res_A^f)=A^{-1}0$. We denote by $A_\lambda=\frac{1}{\lambda}(\nabla f-\nabla f(Res_A^f))$ the Yosida approximation of $A$ for any $\lambda>0$. We get from \cite[ prop 2.7, p.10]{p22} that
\begin{equation*}
A_\lambda(x)\in A\big(Res_A^f(x)\big),\:\forall x\in E,\;\lambda>0,
\end{equation*}
 (see \cite{p21}, too).
\begin{lem}\label{qctas}\cite{p23}
Let $E$ be a real reflexive Banach space and $f:E\rightarrow(-\infty,+\infty]$ be a Legendre function which is totally convex on bounded subsets of $E$. Also let $C$ be a nonempty closed and convex subset of $int\,dom f$ and $T:C\rightarrow 2^C$ be a multi valued Bregman quasi-nonexpansive mapping. Then the fixed point set $F(T)$ of $T$ is a closed and convex subset of $C$.
\end{lem}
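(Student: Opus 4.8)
The plan is to establish closedness and convexity of $F(T)$ separately, in each case exploiting the two facts that carry the argument: $D_f(\cdot,\cdot)\ge 0$ everywhere, and, since $f$ is Legendre, $D_f(a,b)=0$ forces $a=b$ (see \cite[Theorem 7.3(vi)]{p2}). Combined with the defining inequality of a Bregman quasi-nonexpansive mapping, these will in fact pin down $Tx$ as the singleton $\{x\}$ at any point $x$ we wish to show lies in $F(T)$. Here $F(T)=\{x\in C: x\in Tx\}$, with the standing convention $Tx\neq\emptyset$ for all $x\in C$, and quasi-nonexpansiveness is used in the form $D_f(p,u)\le D_f(p,x)$ for every $x\in C$, every $p\in F(T)$, and every $u\in Tx$.

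For closedness, I would take $\{p_n\}\subseteq F(T)$ with $p_n\to p$; since $C$ is closed, $p\in C\subseteq int\,dom f$. Fix an arbitrary $u\in Tp$. Quasi-nonexpansiveness with $p_n\in F(T)$ gives $D_f(p_n,u)\le D_f(p_n,p)$. As a proper lower semicontinuous convex function, $f$ is continuous on $int\,dom f$, so $D_f(p_n,p)=f(p_n)-f(p)-\langle\nabla f(p),p_n-p\rangle\to D_f(p,p)=0$; since $D_f\ge 0$ this forces $D_f(p_n,u)\to 0$. The sequence $\{p_n\}$ is bounded and $f$ is totally convex on bounded subsets of $E$, hence sequentially consistent by Lemma \ref{qpocv}, so $\|p_n-u\|\to 0$, i.e.\ $p_n\to u$. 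Uniqueness of strong limits gives $u=p$, and since $u\in Tp$ was arbitrary, $Tp=\{p\}$, so $p\in F(T)$. Thus $F(T)$ is closed.

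For convexity, I would pick $p_1,p_2\in F(T)$ and $t\in(0,1)$, and set $q:=tp_1+(1-t)p_2\in C$. Fix an arbitrary $u\in Tq$ and apply quasi-nonexpansiveness at $q$ with the fixed points $p_1,p_2$: $D_f(p_i,u)\le D_f(p_i,q)$ for $i=1,2$. Taking $t$ times the $i=1$ estimate plus $(1-t)$ times the $i=2$ estimate, and using the definition of $D_f$ together with $t(p_1-u)+(1-t)(p_2-u)=q-u$ and $t(p_1-q)+(1-t)(p_2-q)=0$, one obtains
\[
tD_f(p_1,u)+(1-t)D_f(p_2,u)=tf(p_1)+(1-t)f(p_2)-f(u)-\langle\nabla f(u),q-u\rangle
\]
and
\[
tD_f(p_1,q)+(1-t)D_f(p_2,q)=tf(p_1)+(1-t)f(p_2)-f(q).
\]
Subtracting, the combined inequality reduces to $f(q)-f(u)-\langle\nabla f(u),q-u\rangle\le 0$, that is $D_f(q,u)\le 0$, hence $D_f(q,u)=0$ and $u=q$ since $f$ is Legendre. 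As $u\in Tq$ was arbitrary, $Tq=\{q\}$, so $q\in F(T)$, and $F(T)$ is convex.

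I do not expect a genuine obstacle: the convexity computation is routine, and the limit passage in the closedness step uses only continuity of $f$ on $int\,dom f$ and the sequential consistency furnished by total convexity on bounded sets. The one point that deserves explicit mention up front is the multivalued bookkeeping — the argument genuinely shows $Tx=\{x\}$ at the relevant $x$, which relies on reading the quasi-nonexpansive estimate as holding for \emph{every} $u\in Tx$ and on the nonemptiness convention for $Tx$; this is where I would be careful.
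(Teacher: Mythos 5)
Your argument is correct and complete: the paper itself gives no proof of this lemma (it is imported from Reich--Sabach \cite{p23}), and your two steps --- sequential consistency from total convexity on bounded sets for closedness, and the convex-combination identity forcing $D_f(q,u)\le 0$ together with the Legendre property ($D_f(q,u)=0\Rightarrow q=u$, \cite[Theorem 7.3(vi)]{p2}) for convexity --- reproduce exactly the standard argument of the cited source. Your explicit handling of the multivalued convention (the quasi-nonexpansive inequality holding for every $u\in Tx$, yielding $Tx=\{x\}$ at the limit/convex-combination point) is the right way to read the hypothesis and is the only place where care is genuinely needed.
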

\begin{lem}\label{ewca}\cite{p16}
Assume that $f:E\rightarrow \mathbb{R}$ is a Legendre function which is uniformly $Fr\acute{e}chet$ differentiable and bounded on bounded subsets of $E$. Let $C$ be a nonempty closed and convex subset of $E$. Also let $\{T_i:i=1, ..., N\}$ be $N$ Bregman strongly nonexpansive mappings which satisfy $\hat{F}(T_i)=F(T_i)$ for each $1\leq i\leq N$ and let $T=T_NT_{N-1}...T_1$. If $F(T)$ and $\bigcap_{i=1}^NF(T_i)$ are nonempty, then $T$ is also Bregman strongly nonexpansive with $F(T)=\hat{F}(T)$.
\end{lem}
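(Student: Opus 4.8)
The plan is to reduce to the case $N=2$ by induction on $N$ (writing $T=T_N\circ(T_{N-1}\cdots T_1)$ and using that $F(T_{N-1}\cdots T_1)\supseteq\bigcap_{i=1}^{N-1}F(T_i)\neq\emptyset$ and $F(T_N)\cap F(T_{N-1}\cdots T_1)=\bigcap_{i=1}^N F(T_i)\neq\emptyset$, once the inductive identification of fixed point sets is in hand), and then, for $T=T_2T_1$, to (i) identify $F(T)$, (ii) prove $\hat F(T)=F(T)$ together with the quasi-nonexpansive inequality on $\hat F(T)$, and (iii) verify the asymptotic-regularity condition in the definition of a Bregman strongly nonexpansive operator. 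For (i) I would show $F(T)=F(T_1)\cap F(T_2)$: the inclusion $\supseteq$ is trivial, and for $\subseteq$ I take $x\in F(T)$ and a common fixed point $z$ of $T_1,T_2$ (which exists by hypothesis) and chain the two quasi-nonexpansive inequalities,
\[
D_f(z,x)=D_f(z,T_2T_1x)\le D_f(z,T_1x)\le D_f(z,x),
\]
so $D_f(z,T_1x)=D_f(z,x)$; feeding the constant sequence $x_n\equiv x$ into the strongly nonexpansive property of $T_1$ gives $D_f(T_1x,x)=0$, hence $T_1x=x$ because $f$ is Legendre, and then $T_2x=T_2T_1x=x$.

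For (ii), $F(T)\subseteq\hat F(T)$ always holds. Conversely, given $p\in\hat F(T)$ with $x_n\rightharpoonup p$ and $\|x_n-Tx_n\|\to0$ (so $\{x_n\}$ is bounded), I fix $z\in F(T)$ and, using Lemma~\ref{zakhv} together with $\|x_n-Tx_n\|\to0$, deduce $D_f(z,x_n)-D_f(z,Tx_n)\to0$; inserting the point $T_1x_n$ and using $D_f(z,Tx_n)\le D_f(z,T_1x_n)\le D_f(z,x_n)$ shows that the two one-step gaps $D_f(z,x_n)-D_f(z,T_1x_n)$ and $D_f(z,T_1x_n)-D_f(z,T_2T_1x_n)$ both tend to $0$ (the intermediate sequences being bounded by Lemma~\ref{fdyuq}). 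Then the strongly nonexpansive property of $T_1$ and of $T_2$ gives $D_f(T_1x_n,x_n)\to0$ and $D_f(T_2T_1x_n,T_1x_n)\to0$, and sequential consistency (Lemma~\ref{qpocv}) converts these into $\|T_1x_n-x_n\|\to0$ and $\|T_2T_1x_n-T_1x_n\|\to0$. Hence $T_1x_n\rightharpoonup p$ with $\|x_n-T_1x_n\|\to0$ forces $p\in\hat F(T_1)=F(T_1)$, and $T_1x_n\rightharpoonup p$ with $\|T_1x_n-T_2T_1x_n\|\to0$ forces $p\in\hat F(T_2)=F(T_2)$, so $p\in F(T)$ by (i). The inequality $D_f(y,Tx)\le D_f(y,x)$ for $y\in\hat F(T)=F(T)$ is then immediate by chaining, since $y\in F(T_1)\cap F(T_2)$.

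For (iii), take a bounded $\{x_n\}\subseteq C$ with $D_f(y,x_n)-D_f(y,Tx_n)\to0$ for $y\in\hat F(T)=F(T)=F(T_1)\cap F(T_2)$, and set $x_n^{(0)}=x_n$, $x_n^{(1)}=T_1x_n$, $x_n^{(2)}=Tx_n$. Telescoping,
\[
D_f(y,x_n)-D_f(y,Tx_n)=\big(D_f(y,x_n^{(0)})-D_f(y,x_n^{(1)})\big)+\big(D_f(y,x_n^{(1)})-D_f(y,x_n^{(2)})\big),
\]
is a sum of two nonnegative terms tending to $0$, so each tends to $0$; since $\{x_n^{(1)}\},\{x_n^{(2)}\}$ are bounded (Lemma~\ref{fdyuq}), the strongly nonexpansive property of $T_1,T_2$ yields $D_f(x_n^{(1)},x_n^{(0)})\to0$ and $D_f(x_n^{(2)},x_n^{(1)})\to0$, and Lemma~\ref{qpocv} gives $\|x_n^{(1)}-x_n^{(0)}\|\to0$, $\|x_n^{(2)}-x_n^{(1)}\|\to0$. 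I then invoke the three point identity \eqref{awpon},
\[
D_f(x_n^{(2)},x_n^{(0)})=D_f(x_n^{(2)},x_n^{(1)})+D_f(x_n^{(1)},x_n^{(0)})-\big\langle x_n^{(2)}-x_n^{(1)},\,\nabla f(x_n^{(0)})-\nabla f(x_n^{(1)})\big\rangle,
\]
where the first two terms tend to $0$ and the pairing tends to $0$ because $\|x_n^{(2)}-x_n^{(1)}\|\to0$ while $\nabla f(x_n^{(0)})-\nabla f(x_n^{(1)})\to0$ by Lemma~\ref{zakhv}; hence $D_f(Tx_n,x_n)\to0$, which is the required condition. The general-$N$ statement then follows by the induction indicated at the outset.

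I expect the main obstacle to be the step, used twice, of upgrading control of consecutive Bregman gaps $D_f(x_n^{(i)},x_n^{(i-1)})\to0$ to the single statement $D_f(Tx_n,x_n)\to0$ (equivalently $\|x_n-Tx_n\|\to0$): since $D_f$ is not a metric there is no triangle inequality available to chain directly, and this is exactly where one must use sequential consistency / total convexity on bounded subsets together with the norm-to-norm uniform continuity of $\nabla f$ on bounded subsets (Lemmas~\ref{qpocv} and~\ref{zakhv}), i.e. the full strength of the hypotheses on $f$. A secondary but essential point is (i): the equality $F(T)=\bigcap_i F(T_i)$ genuinely needs the \emph{strong} nonexpansiveness of the factors, not merely quasi-nonexpansiveness, and enters through the identity $D_f(a,b)=0\Rightarrow a=b$ valid for Legendre $f$.
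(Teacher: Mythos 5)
The paper does not actually prove this lemma: it is imported verbatim, with attribution, from the cited reference \cite{p16}, so there is no in-paper argument to compare yours against. Your reconstruction follows the standard route for such composition results (reduce to $N=2$ by induction; identify $F(T_2T_1)=F(T_1)\cap F(T_2)$ by feeding constant sequences into the asymptotic condition and using that $D_f(a,b)=0$ forces $a=b$ for Legendre $f$; telescope the Bregman gaps at a common fixed point; recombine via the three-point identity \eqref{awpon}), and that architecture is sound and is essentially the one used in the source.

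The one genuine problem is the step you yourself single out. You invoke Lemma \ref{qpocv} (sequential consistency) several times to upgrade $D_f(u_n,v_n)\to 0$ to $\|u_n-v_n\|\to 0$, but sequential consistency is \emph{equivalent} to total convexity of $f$ on bounded subsets, and that is not among the hypotheses of this lemma: you are given only that $f$ is Legendre, uniformly Fr\'{e}chet differentiable and bounded on bounded subsets, which is a smoothness-type assumption and does not by itself deliver any uniform convexity of $f$. So, as a proof of the lemma as stated, the inferences of the form ``$D_f(T_1x_n,x_n)\to0$ hence $\|T_1x_n-x_n\|\to0$'' are not licensed. Two remarks. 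First, where the lemma is actually applied in Theorem \ref{asli}, $f$ \emph{is} additionally assumed totally convex on bounded subsets, so your argument suffices for the paper's purposes even if it does not establish the lemma in the generality claimed. Second, for your part (iii) the detour through norm convergence is avoidable: writing $D_f(y,x)=D_{f^*}(\nabla f(x),\nabla f(y))$ and using the duality quoted in Section 2 from \cite{p31} (uniform Fr\'{e}chet differentiability and boundedness of $f$ on bounded sets correspond to uniform convexity of $f^*$ on bounded subsets of $E^*$), one passes from $D_f(T_1x_n,x_n)\to0$ directly to $\|\nabla f(T_1x_n)-\nabla f(x_n)\|\to0$, which is all the pairing in the three-point identity requires. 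For part (ii), however, placing the weak limit $p$ in $\hat F(T_1)$ genuinely needs $\|x_n-T_1x_n\|\to0$ in norm, so that portion of your argument requires either the extra convexity hypothesis or a different idea.
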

\begin{lem}\label{vtye}\cite{p20}
Let $G:E\rightarrow 2^{E^*}$ be a maximal monotone operator and $B:E\rightarrow E^*$ be a Bregman inverse strongly monotone mapping such that $(G+B)^{-1}(0^*)\neq \emptyset$. Also let $f:E\rightarrow \mathbb{R}$ be a Legendre function which is uniformly $Fr\acute{e}chet$ differentiable and bounded on bounded subset of $E$. Then
\begin{itemize}
  \item [{\rm(i)}] $(G+B)^{-1}(0^*)=F(Res_{\lambda G}^f\circ B_\lambda ^f)$.
  \item [{\rm(ii)}]  $Res_{\lambda G}^f\circ B_\lambda ^f$ is a Bregman strongly nonexpansive mapping such that
  \begin{equation*}
F(Res_{\lambda G}^f\circ B_\lambda ^f)=\hat{F}(Res_{\lambda G}^f\circ B_\lambda ^f).
  \end{equation*}
  \item [{\rm(iii)}] $ D_f\big(u,Res_{\lambda G}^f\circ B_\lambda ^f(x)\big)+D_f\big(Res_{\lambda G}^f\circ B_\lambda ^f(x),x\big)\leq D_f(u,x),\;\forall u\in(G+B)^{-1}(0^*),\;x\in E\;and\;\lambda>0$.
  \end{itemize}
\end{lem}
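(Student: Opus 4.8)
The plan is to prove the three items in the order (i), (iii), (ii): item (iii) carries the real content, and (ii) will fall out of it together with the facts about resolvents recalled above. Item (i) is pure unwinding. Since $\nabla f\circ\nabla f^{*}$ is the identity on $int\,dom\,f^{*}$, one has $\nabla f\big(B_{\lambda}^{f}(x)\big)=\nabla f(x)-\lambda Bx$, so that
\[
Res_{\lambda G}^{f}\circ B_{\lambda}^{f}(x)=(\nabla f+\lambda G)^{-1}\big(\nabla f(x)-\lambda Bx\big);
\]
hence $x$ is a fixed point of $Res_{\lambda G}^{f}\circ B_{\lambda}^{f}$ exactly when $\nabla f(x)-\lambda Bx\in\nabla f(x)+\lambda Gx$, i.e. $-Bx\in Gx$, i.e. $0^{*}\in(G+B)x$. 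This gives (i).

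For (iii) I would fix $u\in(G+B)^{-1}(0^{*})$ and $x\in E$ and set $v:=B_{\lambda}^{f}(x)$ and $z:=Res_{\lambda G}^{f}(v)=Res_{\lambda G}^{f}\circ B_{\lambda}^{f}(x)$. The definition of the resolvent gives $\nabla f(v)-\nabla f(z)\in\lambda Gz$, while $-Bu\in Gu$ because $u$ is a zero of $G+B$. Applying the three-point identity \eqref{awpon} with the triple $u,z,x$ yields
\[
D_{f}(u,z)+D_{f}(z,x)-D_{f}(u,x)=\langle u-z,\,\nabla f(x)-\nabla f(z)\rangle .
\]
Writing $\nabla f(x)-\nabla f(z)=\big(\nabla f(x)-\nabla f(v)\big)+\big(\nabla f(v)-\nabla f(z)\big)=\lambda Bx+\big(\nabla f(v)-\nabla f(z)\big)$ and using monotonicity of $G$ on the pair $\frac{1}{\lambda}\big(\nabla f(v)-\nabla f(z)\big)\in Gz$, $-Bu\in Gu$, this collapses to
\[
D_{f}(u,z)+D_{f}(z,x)-D_{f}(u,x)\ \leq\ -\lambda\,\langle z-u,\,Bx-Bu\rangle ,
\]
so it remains to show $\langle z-u,\,Bx-Bu\rangle\geq 0$. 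At this point I would invoke the Bregman inverse strong monotonicity of $B$ relative to $f$, together with the anti-resolvent identity $\nabla f\big(B_{\lambda}^{f}(w)\big)=\nabla f(w)-\lambda Bw$ and the resolvent inclusion $\nabla f(w)-\nabla f\big(Res_{\lambda G}^{f}(w)\big)\in\lambda G\big(Res_{\lambda G}^{f}(w)\big)$ applied to both $x$ and $u$, to conclude that the right-hand side is nonpositive; this establishes (iii).

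Granting (iii), item (ii) is short. Discarding the nonnegative summand $D_{f}(z,x)$ shows that $T:=Res_{\lambda G}^{f}\circ B_{\lambda}^{f}$ satisfies $D_{f}(u,Tx)\leq D_{f}(u,x)$ for every $u\in(G+B)^{-1}(0^{*})$, which by (i) is $F(T)$; and if $\{x_{n}\}$ is bounded with $D_{f}(u,x_{n})-D_{f}(u,Tx_{n})\to 0$, then (iii) forces $D_{f}(Tx_{n},x_{n})\to 0$, which is exactly the defining condition of a Bregman strongly nonexpansive operator. It remains to check $F(T)=\hat F(T)$: the inclusion $F(T)\subseteq\hat F(T)$ is automatic, and for the converse I would take a bounded sequence $x_{n}\rightharpoonup p$ with $x_{n}-Tx_{n}\to 0$, observe that $\frac{1}{\lambda}\big(\nabla f(x_{n})-\nabla f(Tx_{n})\big)-Bx_{n}\in G(Tx_{n})$, use the uniform continuity of $\nabla f$ on bounded sets (Lemma~\ref{zakhv}) together with $Tx_{n}-x_{n}\to 0$, and pass to the limit by the demiclosedness of the graph of the maximal monotone operator $G$ to obtain $-Bp\in Gp$, i.e. $p\in F(T)$.

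I expect the middle step of (iii) to be the main obstacle: once the monotonicity of $G$ has been spent, one is left with the cross term $\langle z-u,\,Bx-Bu\rangle$ produced by the forward step, and proving that it has the correct sign — equivalently, that replacing $x$ by its anti-resolvent $B_{\lambda}^{f}(x)$ does not destroy the Bregman descent — is precisely the place where the \emph{inverse strong} monotonicity of $B$ (and not merely its monotonicity) has to be used, organized around the anti-resolvent and resolvent identities. Everything else is bookkeeping with the three- and four-point identities for $D_{f}$ and the properties of resolvents of maximal monotone operators already recorded in the preliminaries.
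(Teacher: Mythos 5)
A preliminary remark on the comparison you asked for: the paper offers no proof of this lemma at all — it is quoted, with attribution, from reference \cite{p20} (Ogbuisi and Izuchukwu) — so your proposal can only be judged on its own merits. Part (i) is correct and complete. Your reduction of (iii) is also correctly computed as far as it goes: with $v=B_\lambda^f(x)$ and $z=Res_{\lambda G}^f(v)$, the three-point identity \eqref{awpon} together with the monotonicity of $G$ applied to $\tfrac{1}{\lambda}(\nabla f(v)-\nabla f(z))\in Gz$ and $-Bu\in Gu$ does yield $D_f(u,x)-D_f(u,z)-D_f(z,x)\geq\lambda\langle z-u,\,Bx-Bu\rangle$.

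The genuine gap is the step you explicitly defer, $\langle z-u,\,Bx-Bu\rangle\geq 0$, and ``invoking'' Bregman inverse strong monotonicity does not close it. BISM pairs the operator increment with the increment of the \emph{anti-resolvent}: it says exactly that $\langle Bx-Bu,\,B_\lambda^f(x)-B_\lambda^f(u)\rangle\geq 0$ (equivalently, that $B_\lambda^f$ is Bregman firmly nonexpansive), whereas you need it paired with $z-u=Res_{\lambda G}^f(B_\lambda^f(x))-Res_{\lambda G}^f(B_\lambda^f(u))$, i.e.\ \emph{after} the backward step. Nothing lets you push the resolvent through this duality pairing: the firm nonexpansiveness of $Res_{\lambda G}^f$ controls $\langle\nabla f(v)-\nabla f(v'),\,z-u\rangle$, and unwinding that estimate just reproduces the inequality you already derived, so the argument becomes circular. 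The Hilbert-space model ($f=\tfrac12\|\cdot\|^2$) is instructive: there the standard forward–backward estimate applies nonexpansiveness of the resolvent to the pre-images $x-\lambda Bx$ and $u-\lambda Bu$ and then uses inverse strong monotonicity to control the cross term $\langle x-u,\,Bx-Bu\rangle$ — not $\langle Tx-u,\,Bx-Bu\rangle$ — which strongly suggests your reduction has isolated the wrong cross term, not merely left a routine verification undone. Since your inequality is only sufficient for (iii), its possible failure does not refute the lemma, but it does mean the decomposition must be reorganized (e.g.\ around the BFNE property of both $Res_{\lambda G}^f$ and $B_\lambda^f$ via the four-point identity) rather than completed. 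A secondary issue: in your proof of $\hat F(T)\subseteq F(T)$ in (ii) you pass to the limit in $\tfrac{1}{\lambda}\bigl(\nabla f(x_n)-\nabla f(Tx_n)\bigr)-Bx_n\in G(Tx_n)$ with $x_n\rightharpoonup p$, which tacitly requires $Bx_n$ to converge to $Bp$; no continuity of $B$ is assumed, so this too needs an argument (the usual one goes through the monotonicity of $G+B$ and the inclusion $\tfrac{1}{\lambda}\bigl(\nabla f(x_n)-\nabla f(Tx_n)\bigr)\in G(Tx_n)+Bx_n$).
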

\begin{lem}\label{awqqq}\cite{p26}
Let $f:E\rightarrow (-\infty,+\infty]$ be a proper convex and lower semicontinuous Legendre function. Then for any $z\in E$, for any $\{x_n\}\subseteq E$ and $\{t_i\}_{i=1}^N\subseteq (0,1)$ with $\sum_{i=1}^Nt_i=1$, the following holds:
\begin{equation}\label{admwr}
D_f\bigg(z,\nabla f^*\big(\sum_{i=1}^Nt_i\nabla f(x_i)\big)\bigg)\leq \sum_{i=1}^Nt_iD_f(z,x_i).
\end{equation}
\end{lem}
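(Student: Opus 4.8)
The plan is to rewrite every Bregman distance appearing in \eqref{admwr} by means of the Fenchel--Young equality and then to reduce the whole inequality to the convexity of the conjugate $f^*$. First I would record the following reformulation of $D_f$: if $x\in int\,dom f$ and $\xi:=\nabla f(x)$, then $\xi\in\partial f(x)$, so the Fenchel--Young equality gives $f(x)+f^*(\xi)=\langle x,\xi\rangle$, and therefore, for every $y\in dom f$,
\begin{equation*}
D_f(y,x)=f(y)-f(x)-\langle\nabla f(x),y-x\rangle=f(y)+f^*(\xi)-\langle\xi,y\rangle .
\end{equation*}

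Next I would set $\xi_i:=\nabla f(x_i)$ for $i=1,\dots,N$ and $\bar\xi:=\sum_{i=1}^N t_i\xi_i$. Since each $\xi_i$ lies in $ran\,\nabla f=int\,dom f^*$, which is convex, the point $\bar\xi$ lies there too, so $\bar x:=\nabla f^*(\bar\xi)$ is well defined, and because $f$ is Legendre on a reflexive space we have $\nabla f=(\nabla f^*)^{-1}$, hence $\nabla f(\bar x)=\bar\xi$. Applying the reformulation above with $x=\bar x$ yields
\begin{equation*}
D_f\big(z,\nabla f^*(\bar\xi)\big)=f(z)+f^*(\bar\xi)-\langle\bar\xi,z\rangle ,
\end{equation*}
and applying it with $x=x_i$, multiplying by $t_i$, summing, and using $\sum_{i=1}^N t_i=1$ together with the linearity of the pairing in its second argument yields
\begin{equation*}
\sum_{i=1}^N t_i D_f(z,x_i)=f(z)+\sum_{i=1}^N t_i f^*(\xi_i)-\langle\bar\xi,z\rangle .
\end{equation*}

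Subtracting these two identities, I would observe that \eqref{admwr} is equivalent to $f^*(\bar\xi)\le\sum_{i=1}^N t_i f^*(\xi_i)$, that is, to
\begin{equation*}
f^*\Big(\sum_{i=1}^N t_i\xi_i\Big)\le\sum_{i=1}^N t_i f^*(\xi_i),
\end{equation*}
which is simply the convexity of $f^*$ (a pointwise supremum of affine functions is convex) applied to the points $\xi_1,\dots,\xi_N$ with weights $t_1,\dots,t_N$. This finishes the proof; alternatively one could first establish the case $N=2$ and then induct, but the direct computation above is shorter.

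I do not expect a real obstacle. The only point demanding care is the bookkeeping of domains: one needs each $x_i\in int\,dom f$ so that $\nabla f(x_i)$ is meaningful, one needs $\bar\xi\in int\,dom f^*$ so that $\nabla f^*$ is single valued there and $\nabla f\circ\nabla f^*$ is the identity, and one needs $f^*(\xi_i)<\infty$ (which is automatic since $\xi_i\in int\,dom f^*\subseteq dom f^*$). All of these follow from the Legendre hypothesis together with the identities $ran\,\nabla f=int\,dom f^*=dom\,\nabla f^*$ and $(\partial f)^{-1}=\partial f^*$ recalled in the preliminaries, and from the convexity of $int\,dom f^*$.
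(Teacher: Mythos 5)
Your argument is correct, and it is essentially the standard proof of this lemma: the paper itself states it without proof (citing the reference), and the cited source's argument likewise rewrites $D_f\big(z,\nabla f^*(\xi)\big)$ as $f(z)+f^*(\xi)-\langle \xi,z\rangle$ (the function often denoted $V_f(z,\xi)$) and invokes the convexity of $f^*$ in $\xi$, exactly as you do. The only cosmetic caveat is that one should read the hypothesis as $z\in\mathrm{dom}\, f$ (so that $f(z)$ is finite) and each $x_i\in int\,\mathrm{dom}\, f$, which is implicit in the definition of $D_f$ and is precisely the domain bookkeeping you already flagged.
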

\begin{pr}\label{awlzx}(\cite{p22} prop.2.8,p10)
Let $f$ be a $G\hat{a}teaux$ differentiable and $A:E\rightarrow 2^{E^*}$ be a maximal monotone operator such that $A^{-1}0\neq \emptyset$. Then
\begin{equation*}
D_f(q,x)\geq D_f\big(q,Res_{rA}^f(x)\big)+D_f\big(Res_{rA}^f(x),x\big).
\end{equation*}
for all $r>0,\;q\in A^{-1}0$ and $x\in E$.
\end{pr}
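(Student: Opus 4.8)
The plan is to reduce the inequality to a direct application of the monotonicity of $A$ together with the three point identity \eqref{awpon}. First I would abbreviate $z:=Res_{rA}^f(x)=(\nabla f+rA)^{-1}\nabla f(x)$ and read off from this definition that $\nabla f(x)\in(\nabla f+rA)(z)$, equivalently $\tfrac{1}{r}\big(\nabla f(x)-\nabla f(z)\big)\in A(z)$. If one is concerned about whether $Res_{rA}^f$ is single valued under the present hypotheses, observe that what follows uses only that $z$ is \emph{some} point with this property, so the argument applies to every branch of the resolvent. Since $q\in A^{-1}0$ gives $0\in A(q)$, monotonicity of $A$ applied to the points $z$ and $q$ yields $\big\langle z-q,\ \tfrac{1}{r}(\nabla f(x)-\nabla f(z))\big\rangle\geq 0$, and multiplying through by $r>0$,
\begin{equation*}
\big\langle z-q,\ \nabla f(x)-\nabla f(z)\big\rangle\geq 0 .
\end{equation*}

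Next I would expand the target quantity using \eqref{awpon} with first argument $q$, second argument $z$ and third argument $x$ (all lying in the appropriate domains), obtaining
\begin{equation*}
D_f(q,z)+D_f(z,x)-D_f(q,x)=\big\langle q-z,\ \nabla f(x)-\nabla f(z)\big\rangle,
\end{equation*}
so that
\begin{equation*}
D_f(q,x)=D_f(q,z)+D_f(z,x)+\big\langle z-q,\ \nabla f(x)-\nabla f(z)\big\rangle .
\end{equation*}
Combining the last display with the inequality from the previous step gives $D_f(q,x)\geq D_f(q,z)+D_f(z,x)$, which upon substituting back $z=Res_{rA}^f(x)$ is exactly the asserted estimate.

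I do not expect a serious obstacle here: the whole argument is essentially a two line computation once the resolvent identity $\nabla f(x)-\nabla f(z)\in rA(z)$ is available. The only points that require a little care are keeping track of the order of the arguments (and hence the sign of the inner product) in the three point identity \eqref{awpon}, and checking that the assumed differentiability of $f$ makes $\nabla f$ well defined so that $Res_{rA}^f(x)$ makes sense as written; both are routine.
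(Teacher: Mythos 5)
Your argument is correct: the identification $\tfrac{1}{r}\big(\nabla f(x)-\nabla f(z)\big)\in A(z)$ from the definition of $Res_{rA}^f$, the monotonicity of $A$ tested against $0\in A(q)$, and the three point identity \eqref{awpon} with arguments $(q,z,x)$ combine exactly as you say to give the estimate. The paper itself offers no proof of this proposition (it is quoted from Reich and Sabach \cite{p22}), and your computation is precisely the standard argument given in that source, so there is nothing further to compare.
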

\begin{de}\cite{p19a}
Let $C$ be a nonempty, closed and convex subset of a reflexive Banach space $E$. Let $\{T_n\}_{n\in \mathbb{N}}$ be an infinite family of Bregman weak relatively nonexpansive mappings of $C$ into itself, and let $\{\beta_{n,k}:k,n\in \mathbb{N} , 1\leq k\leq n\}$ be a sequence of real numbers such that $0\leq\beta_{i,j}\leq1$ for every $i,j\in \mathbb{N}$ with $i\geq j$. Then, for any $n\in \mathbb{N}$, we define mapping $W_n$ of $C$ into itself as follows:
\begin{align*}
U_{n,n+1}x&=x,\\
U_{n,n}x&=proj_C^f\big(\nabla f^*[\beta_{n,n}\nabla f(T_nU_{n,n+1}x)+(1-\beta_{n,n})\nabla f(x)]\big),\\
U_{n,n-1}x&=proj_C^f\big(\nabla f^*[\beta_{n,n-1}\nabla f(T_{n-1}U_{n,n}x)+(1-\beta_{n,n-1})\nabla f(x)]\big),\\
\vdots\\
U_{n,k}x&=proj_C^f\big(\nabla f^*[\beta_{n,k}\nabla f(T_kU_{n,k+1}x)+(1-\beta_{n,k})\nabla f(x)]\big),\\
\vdots\\
U_{n,2}x&=proj_C^f\big(\nabla f^*[\beta_{n,2}\nabla f(T_2U_{n,3}x)+(1-\beta_{n,2})\nabla f(x)]\big),\\
W_n=U_{n,1}x&=\nabla f^*[\beta_{n,1}\nabla f(T_1U_{n,2}x)+(1-\beta_{n,1})\nabla f(x)],
\end{align*}
for all $x\in C$, where $proj_C^f$ is the Bregman projection from $E$ onto $C$. Such a mapping $W_n$ is called the Bergman $W$-mapping generated by $T_n,T_{n-1}, ..., T_1$ and $\beta_{n,n}, \beta_{n,n-1}, ..., \beta_{n,1}$.
\end{de}
\begin{pr}\cite{p19a}\label{3efrg}
Let $E$ be a reflexive Banach space and $f:E\rightarrow \mathbb{R}$ be a convex, continuous, strongly coercive and $G\hat{a}teaux$ differentiable function which is bounded on bounded subsets and uniformly convex on bounded subsets of $E$. Let $C$ be a nonempty, closed and convex subset of $E$. Let $T_1, T_2, ..., T_n$ be Bregman weak relatively nonexpansive mappings of $C$ into itself such that $\bigcap_{i=1}^nF(T_i)\neq \emptyset$, and let $\{\beta_{n,k}:k,n\in \mathbb{N}, 1\leq k\leq n\}$ be a sequence of real numbers such that $0<\beta_{n,1}\leq 1$ and $0<\beta_{n,i}<1$ for every $i=2, 3, ..., n$. Let $W_n$ is the Bregman $W$-mapping generated by $T_n, T_{n-1}, ..., T_1$ and  $\beta_{n,n}, \beta_{n,n-1}, ..., \beta_{n,1}$. Then the following assertions hold:
\begin{itemize}
  \item [{\rm(i)}] $F(W_n)=\bigcap_{i=1}^nF(T_i)$,
  \item [{\rm(ii)}] for every $k=1, 2, ..., n, \; x\in C$ and $z\in F(W_n),D_f(z,U_{n,k}x)\leq D_f(z,x)$ and\\ $D_f(z,T_kU_{n,k+1}x)\leq D_f(z,x)$,
  \item [{\rm(iii)}] for every $n\in \mathbb{N} $, $W_n$ is a Bregman weak relatively nonexpansive mapping.
  \end{itemize}
\end{pr}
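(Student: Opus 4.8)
The plan is to establish (ii) first — it already gives the inclusion $\bigcap_{i=1}^nF(T_i)\subseteq F(W_n)$ — then complete (i), and finally deduce (iii); the Bregman--distance estimate (ii) does most of the work throughout. I would first abbreviate the pre-projection point $w_{n,k}(x):=\nabla f^{*}\big(\beta_{n,k}\nabla f(T_kU_{n,k+1}x)+(1-\beta_{n,k})\nabla f(x)\big)$, so that $U_{n,k}x=proj_C^f\big(w_{n,k}(x)\big)$ for $2\le k\le n$, while $W_nx=U_{n,1}x=w_{n,1}(x)$ carries no projection. To prove (ii), fix $z\in\bigcap_{i=1}^nF(T_i)$ and induct downward on $k$ from $n+1$ to $1$: the base case is $U_{n,n+1}x=x$; for the step, Bregman weak relative nonexpansiveness of $T_k$ gives $D_f(z,T_kU_{n,k+1}x)\le D_f(z,U_{n,k+1}x)\le D_f(z,x)$, Lemma \ref{rtmnw}(iii) lets me discard the nonnegative projection gap and replace $D_f(z,U_{n,k}x)$ by $D_f(z,w_{n,k}(x))$, and Lemma \ref{awqqq} with $N=2$ bounds the latter by $\beta_{n,k}D_f(z,T_kU_{n,k+1}x)+(1-\beta_{n,k})D_f(z,x)\le D_f(z,x)$. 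Running the same induction with $x=z\in\bigcap_{i=1}^nF(T_i)$, where now $T_kz=z$ and $proj_C^f(z)=z$, forces $U_{n,k}z=z$ for every $k$, hence $W_nz=z$; this is the inclusion $\bigcap_{i=1}^nF(T_i)\subseteq F(W_n)$.

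For the reverse inclusion $F(W_n)\subseteq\bigcap_{i=1}^nF(T_i)$, take $z\in F(W_n)$ and an auxiliary $p\in\bigcap_{i=1}^nF(T_i)\subseteq F(W_n)$. Since $W_nz=z$, the full chain of inequalities used above to bound $D_f(p,W_nz)$ by $D_f(p,z)$ collapses to a chain of equalities, which I would exploit layer by layer. At the top layer, equality forces $D_f(p,U_{n,2}z)=D_f(p,z)$ and $T_1U_{n,2}z=z$ (immediate when $\beta_{n,1}=1$; otherwise from the equality case of Lemma \ref{awqqq}, i.e.\ from strict convexity of $f^{*}$ and injectivity of $\nabla f$). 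At a layer $2\le k\le n$, knowing $D_f(p,U_{n,k}z)=D_f(p,z)$, equality in Lemma \ref{rtmnw}(iii) forces the projection gap $D_f(U_{n,k}z,w_{n,k}(z))$ to vanish, so $U_{n,k}z=w_{n,k}(z)$ by total convexity of $f$ (Lemma \ref{qpocv}); equality in Lemma \ref{awqqq} (using $0<\beta_{n,k}<1$) forces $\nabla f(T_kU_{n,k+1}z)=\nabla f(z)$, hence $T_kU_{n,k+1}z=z$, hence $w_{n,k}(z)=z$ and $U_{n,k}z=z$; and the same equalities give $D_f(p,U_{n,k+1}z)=D_f(p,z)$, feeding the next layer. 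Thus $U_{n,k}z=z$ for all $k$ and $T_kU_{n,k+1}z=z$ for all $k$; since $U_{n,k+1}z=z$ as well, we get $T_kz=z$ for every $k$, i.e.\ $z\in\bigcap_{i=1}^nF(T_i)$. This proves (i), and, since now $F(W_n)=\bigcap_{i=1}^nF(T_i)$, (ii) holds for all $z\in F(W_n)$.

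For (iii), the inclusion $F(W_n)\subseteq\tilde{F}(W_n)$ is trivial, so it remains to show $\tilde{F}(W_n)\subseteq F(W_n)$. Given $p\in\tilde{F}(W_n)$ with $z_j\to p$ and $\|z_j-W_nz_j\|\to0$, fix $q\in\bigcap_{i=1}^nF(T_i)$; all iterates $\{U_{n,k}z_j\}_j$ stay bounded (Lemma \ref{fdyuq}), and $D_f(q,z_j)\to D_f(q,p)$, $D_f(q,W_nz_j)\to D_f(q,p)$. Feeding this into the estimate chain of (ii) at $x=z_j$, the slack in each of its inequalities is nonnegative and must tend to $0$, giving layer by layer $D_f(q,U_{n,k}z_j)\to D_f(q,p)$, $D_f(q,T_kU_{n,k+1}z_j)\to D_f(q,p)$ and $D_f(U_{n,k}z_j,w_{n,k}(z_j))\to0$. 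Sequential consistency (Lemma \ref{qpocv}) then yields $\|U_{n,k}z_j-w_{n,k}(z_j)\|\to0$, and combining this with the continuity of $\nabla f$ and $\nabla f^{*}$ on bounded sets and the identity $\nabla f(w_{n,k}(z_j))=\beta_{n,k}\nabla f(T_kU_{n,k+1}z_j)+(1-\beta_{n,k})\nabla f(z_j)$, one propagates the convergences through the $n$ layers to obtain $U_{n,k+1}z_j\to p$ and $\|U_{n,k+1}z_j-T_kU_{n,k+1}z_j\|\to0$ for each $k$. Since each $T_k$ is Bregman weak relatively nonexpansive, $\tilde{F}(T_k)=F(T_k)$, so $p\in F(T_k)$ for all $k$, whence $p\in\bigcap_{i=1}^nF(T_i)=F(W_n)$.

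The step I expect to be the main obstacle is the layer-by-layer propagation in (iii): turning the single vanishing quantity $\|z_j-W_nz_j\|\to0$ into strong convergence $U_{n,k+1}z_j\to p$ and asymptotic regularity $\|U_{n,k+1}z_j-T_kU_{n,k+1}z_j\|\to0$ at every level requires tracking how the Bregman gaps at the $n$ nested ``convex combination then Bregman projection'' stages control one another and then converting those gaps back into norm statements via sequential consistency and the uniform continuity of the gradient maps on bounded sets. A secondary delicate point, in (i), is the rigidity step: equality in the Bregman--Jensen inequality of Lemma \ref{awqqq} with interior weights must force the points to coincide, which rests on strict convexity of $f^{*}$ (equivalently, essential smoothness of $f$) and injectivity of $\nabla f$.
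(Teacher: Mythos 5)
First, a point of reference: the paper does not prove Proposition \ref{3efrg} at all --- it is imported verbatim from \cite{p19a} --- so there is no in-paper proof to compare against, and I am judging your argument on its own merits and against the standard argument in that source. Your treatment of (ii) and (i) is correct and matches that standard argument: the downward induction for (ii) uses exactly the right three ingredients (weak relative nonexpansiveness of $T_k$, Lemma \ref{rtmnw}(iii) to discard the projection gap, Lemma \ref{awqqq} for the convex combination), and the rigidity argument for $F(W_n)\subseteq\bigcap_iF(T_i)$ --- forcing equality through the whole chain, killing the projection gaps via total convexity, and using the equality case of the Bregman--Jensen inequality with interior weights plus injectivity of $\nabla f$ --- is sound, since $f$ is everywhere finite, G\^ateaux differentiable and uniformly convex on bounded sets, hence Legendre with $f^*$ strictly convex where needed.

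Part (iii), however, has a genuine gap, and it sits exactly where you predicted. To invoke $\tilde F(T_k)=F(T_k)$ you must produce, for each $k$, strong convergence $U_{n,k+1}z_j\to p$ \emph{together with} $\|U_{n,k+1}z_j-T_kU_{n,k+1}z_j\|\to 0$. At the inner layers your propagation supplies only (a) $D_f(q,U_{n,k+1}z_j)-D_f(q,T_kU_{n,k+1}z_j)\to0$ for the fixed reference point $q$, (b) $\|U_{n,k}z_j-w_{n,k}(z_j)\|\to0$, and (c) the gradient identity for $w_{n,k}$. But (a) does not imply $\|U_{n,k+1}z_j-T_kU_{n,k+1}z_j\|\to0$ for a map that is merely Bregman weak relatively nonexpansive: that implication is precisely the definition of a Bregman \emph{strongly} nonexpansive map, which $T_k$ is not assumed to be. (In a Hilbert space with $f=\frac{1}{2}\|\cdot\|^2$, (a) says only that $\|q-T_kU_{n,k+1}z_j\|$ and $\|q-U_{n,k+1}z_j\|$ have the same limit, i.e.\ the points approach a common sphere about $q$.) Consequently the recursion never closes: $U_{n,k}z_j\to p$ would require $T_kU_{n,k+1}z_j\to p$ via (b)--(c), and the only layer where you actually obtain the latter is the top one ($k=1$, where there is no projection and the gradient identity can be solved for $\nabla f(T_1U_{n,2}z_j)$); but there you are missing $U_{n,2}z_j\to p$, and since $T_1$ need not be continuous, $T_1U_{n,2}z_j\to p$ gives no information about $U_{n,2}z_j$.

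The missing ingredient --- and the one the source \cite{p19a} uses --- is the refined form of Lemma \ref{awqqq} carrying the gauge of uniform convexity, namely $D_f\big(z,\nabla f^*(t\xi+(1-t)\eta)\big)\le tD_f(z,\nabla f^*\xi)+(1-t)D_f(z,\nabla f^*\eta)-t(1-t)\rho_r\big(\|\xi-\eta\|\big)$, where $\rho_r$ is the (positive) gauge of uniform convexity of $f^*$ on bounded sets. Applied at layer $k$ with $\xi=\nabla f(T_kU_{n,k+1}z_j)$ and $\eta=\nabla f(z_j)$, the vanishing of the Jensen slack forces $\|\nabla f(T_kU_{n,k+1}z_j)-\nabla f(z_j)\|\to0$, hence $T_kU_{n,k+1}z_j\to p$, hence $w_{n,k}(z_j)\to p$ and, by (b), $U_{n,k}z_j\to p$ for every $k$; only then do you get the asymptotic regularity at each level and conclude $p\in F(T_k)$ for all $k$. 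Without this quantitative strengthening of the Jensen inequality, the layer-by-layer propagation you describe cannot be carried out.
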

\begin{lem}\cite{p14.52}\label{fg9jd}
  Let C be a nonempty closed convex subset of a reflexive Banach space E, $A:C\rightarrow E^*$ be a mapping, and $f:E\rightarrow \mathbb{R}$ be a Legendre function. Then
  \begin{equation*}
    Proj^f_C(\nabla f^*[\nabla f(x)-\alpha A(y)])=argmin_{\omega\in C} \{\alpha\langle\omega-y,A(y)\rangle+D_f(\omega,x)\}.
  \end{equation*}
  for all $x\in E$, $y\in C$ and $\lambda\in (0,\infty)$.
\end{lem}
\section{\textbf{Main results}}
 \begin{thm}\label{asli}
Let $E$ be a real reflexive Banach space. Suppose $f:E\rightarrow \mathbb{R}$ is a proper, convex, strongly coercive, Legendre function which is bounded on bounded subsets of $E$, uniformly Fr\'{e}chet differentiable, totally convex on bounded subsets of $E$. Let $C$ be a nonempty, closed and convex subset of $int\,domf$. Let for $j=1, 2, \ldots, N$, $h_j:C\times C\rightarrow \mathbb{R}$ be a bifunction satisfying $A1- A5$. Suppose $\{T_n\}_{n\in \mathbb{N}}$ is a family of Bregman weak relatively nonexpansive mappings of $C$ into itself and let $\{\beta_{n,k}:k,n\in \mathbb{N}, 1\leq k\leq n\}$ be a sequence of real numbers such that $0<\beta_{n,1}\leq 1$ and $0<\beta_{n,i}<1$ for all $n\in \mathbb{N}$ and $i=2, 3, ..., n$. Suppose $W_n$ be the Bregman $W$-mapping generated by $T_n, T_{n-1}, ..., T_1$ and $\beta_{n,n}, \beta_{n,n-1}, ..., \beta_{n,1}$. Let $S:C\rightarrow C$ be a Bregman quasi-nonexpansive mapping and demiclosed mapping. Suppose $\{B_i\}_{i=1}^2$ is a family of two Bregman inverse strongly monotone mappings of $C$ into $E$ and $\{B_{i,{\eta_n}}^f\}_{i=1}^2$ is the family of anti resolvent mappings of $\{B_i\}_{i=1}^2$. Let $G:E\rightarrow 2^{E^*}$ be a maximal monotone mapping on $E$ and let $ Q_\eta=Res_{\eta G}^f = (\nabla f+\eta G)^{-1}\nabla f$ be the resolvent of $G$ for $\eta>0$. Assume that
 \begin{equation*}
  \Omega =\big(\cap_{j=1}^N EP(h_j)\big)\bigcap F(S)\bigcap \big(\cap_{r=1}^\infty F(T_r)\big)\bigcap \big(\cap_{i=1}^2(B_i+G)^{-1}0^*\big)\neq \emptyset.
  \end{equation*}
Let
%$\lbrace e_n\rbrace \subseteq E$ be an error sequence with $\displaystyle\lim_{n\rightarrow \infty}\Vert e_n\Vert =0$ and let
$\{x_n\}$, $\{y_n\}$, $\{z_n\}$ and $\{\bar{v}_n\}$ be the sequences defined by
   \begin{align}\label{mnzx}
  \begin{cases}
  x_1=x\in C, \quad chosen\;\; arbitrarily,\\
  C_1=C,\\
  t^j_n= argmin \big\{\alpha_nh_j(x_n,u)+D_f(u, x_n) : u\in C\big\},\quad j=1,2,\ldots, N,\\
  v^j_n= argmin \big\{\alpha_nh_j(t^j_n,u)+D_f(u, x_n): u\in C\big\},\quad j=1,2,\ldots, N,\\
  j_n\in Argmax\big\{D_f(v^j_n, x_n), j=1,2,\ldots, N\big\},\quad \bar{v}_n=v^{j_n}_n,\\
  y_n=\nabla f^{*}\big((1-\lambda_n)\nabla fW_n(\bar{v}_n)+\lambda_n\nabla f S(\bar{v}_n)\big),\\
  z_n=\nabla f^{*}\big((1-\sigma)\nabla f Q_{\eta_n}B_{1,{\eta_n}}^f(y_n)+\sigma\nabla f Q_{\eta_n}B_{2,{\eta_n}}^f(y_n)\big),\\
  %u_n= J_{r_n}z_n,\\
  C_n=\big\lbrace z\in C: D_f(z,\bar{v}_n)\leq D_f(z,x_n)\big\rbrace,\\
  D_n=\big\lbrace z\in E: D_f(z,y_n)\leq D_f(z,\bar{v}_n)\big\rbrace,\\
  M_n=\big\lbrace z\in E: D_f(z,z_n)\leq D_f(z,y_n)\big\rbrace,\\
  x_{n+1}=Proj_{C_n\cap D_n\cap M_n}^f(x_1),\quad \forall n\in \mathbb{N},
  \end{cases}
  \end{align}
where $\{\eta_n\}\subseteq(0,+\infty)$, $0<\sigma<1$ and $a,b\in \mathbb{R}$ be such that for all $n\in \mathbb{N}$, $0<a\leq\lambda_n\leq b<1$ and also, $\{\alpha_n\}\subseteq [r, s]\subseteq (0, p)$, where $p=min\{\frac{1}{c_1}, \frac{1}{c_2}\}$, $c_1=max_{1\leq j\leq N}c_{j,1}$, $c_2=max_{1\leq j\leq N}c_{j,2}$ where $c_{j,1}, c_{j,2}$ are the Bregman-Lipschitz coeffients of $h_j$ for all $1\leq j\leq N$.\\
Then $\{x_n\}$ converges strongly to a point $\omega_0\in \Omega$ where $\omega_0=Proj_\Omega^f(x_1)$.
  \end{thm}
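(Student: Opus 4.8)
The proof follows the standard four‑step analysis of a shrinking‑projection scheme: (1)~the scheme is well defined and $\Omega\subseteq C_n$ for all $n$; (2)~$\{x_n\}$ converges strongly to some $\omega_0\in C$; (3)~$\omega_0\in\Omega$; (4)~$\omega_0=Proj_\Omega^f(x_1)$. For (1), $\Omega$ is closed and convex ($\cap_{j=1}^N EP(h_j)$ by Lemma \ref{aspoxc}; $F(S)$ and $\cap_{r=1}^\infty F(T_r)$ by Lemma \ref{qctas}; $\cap_{i=1}^2(B_i+G)^{-1}0^*=\cap_{i=1}^2 F\big(Res_{\eta G}^f\circ B_{i,\eta}^f\big)$ by Lemma \ref{vtye}(i) and Lemma \ref{qctas}), so $Proj_\Omega^f(x_1)$ exists; and since $z\mapsto D_f(z,a)-D_f(z,b)$ is affine, each $C_{n+1}$ is $C_n$ intersected with three closed half‑spaces, hence closed and convex. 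The inclusion $\Omega\subseteq C_{n+1}$ is the key point: for $p\in\Omega\subseteq C_n$, Lemma \ref{awqqq} with $p\in F(W_n)=\cap_{i=1}^n F(T_i)$ (Proposition \ref{3efrg}(i)) and $p\in F(S)$ gives $D_f(p,y_n)\le(1-\lambda_n)D_f(p,W_n\bar{v}_n)+\lambda_n D_f(p,S\bar{v}_n)\le D_f(p,\bar{v}_n)$; Lemma \ref{swdczg} (with $x^*=p$ and $1-\alpha_n c_{j,i}\ge1-sc_i>0$) gives $D_f(p,\bar{v}_n)=D_f(p,v_n^{j_n})\le D_f(p,x_n)$; and Lemma \ref{awqqq} with Lemma \ref{vtye}(iii) gives $D_f(p,z_n)\le D_f(p,y_n)$. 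Hence $p\in C_{n+1}$ and $x_{n+1}=Proj_{C_{n+1}}^f(x_1)$ is well defined.

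For (2), Lemma \ref{rtmnw}(iii) applied to $p\in\Omega\subseteq C_n$ gives $D_f(p,x_n)+D_f(x_n,x_1)\le D_f(p,x_1)$, so $\{D_f(x_n,x_1)\}$ is bounded and $\{x_n\}$ is bounded (Lemma \ref{qwpo}); applied to $x_{n+1}\in C_{n+1}\subseteq C_n$ it shows $\{D_f(x_n,x_1)\}$ is nondecreasing, hence convergent, and for $m>n$, $x_m\in C_m\subseteq C_n$ yields $D_f(x_m,x_n)\le D_f(x_m,x_1)-D_f(x_n,x_1)\to0$; by Lemma \ref{qpocv} (sequential consistency) $\{x_n\}$ is Cauchy, so $x_n\to\omega_0\in C$. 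Since $x_{n+1}\in C_{n+1}$, $D_f(x_{n+1},z_n)\le D_f(x_{n+1},y_n)\le D_f(x_{n+1},\bar{v}_n)\le D_f(x_{n+1},x_n)\to0$, so $z_n,y_n,\bar{v}_n\to\omega_0$; by Lemma \ref{zakhv} also $D_f(\bar{v}_n,x_n)\to0$ and, for each $p\in\Omega$, $D_f(p,x_n),D_f(p,\bar{v}_n),D_f(p,y_n),D_f(p,z_n)\to D_f(p,\omega_0)=:L$.

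For (3): \emph{Equilibria.} By the choice of $j_n$, $D_f(v_n^j,x_n)\le D_f(\bar{v}_n,x_n)\to0$, so $v_n^j\to\omega_0$ and hence $D_f(p,v_n^j)-D_f(p,x_n)\to0$; Lemma \ref{swdczg} then forces $(1-\alpha_n c_{j,1})D_f(t_n^j,x_n)\to0$, and as $1-\alpha_n c_{j,1}\ge1-sc_1>0$ we get $D_f(t_n^j,x_n)\to0$, hence $t_n^j\to\omega_0$. Writing the optimality condition for $t_n^j$ through Lemma \ref{khecpo} and Lemma \ref{ehfbqw} gives $\alpha_n\big(h_j(x_n,u)-h_j(x_n,t_n^j)\big)\ge\langle u-t_n^j,\nabla f(x_n)-\nabla f(t_n^j)\rangle$ for all $u\in C$; dividing by $\alpha_n\ge r$, letting $n\to\infty$, and using Lemma \ref{zakhv} and (A3) gives $h_j(\omega_0,u)\ge0$, i.e. $\omega_0\in EP(h_j)$. \emph{Zeros.} Lemma \ref{awqqq} and Lemma \ref{vtye}(iii) give $D_f(p,z_n)\le D_f(p,y_n)-(1-\sigma)D_f\big(Q_{\eta_n}B_{1,\eta_n}^f y_n,y_n\big)-\sigma D_f\big(Q_{\eta_n}B_{2,\eta_n}^f y_n,y_n\big)$, so $D_f\big(Q_{\eta_n}B_{i,\eta_n}^f y_n,y_n\big)\to0$ and $Q_{\eta_n}B_{i,\eta_n}^f y_n\to\omega_0$; since $\frac1{\eta_n}\big(\nabla f(y_n)-\nabla f(Q_{\eta_n}B_{i,\eta_n}^f y_n)\big)-B_i y_n\in G\big(Q_{\eta_n}B_{i,\eta_n}^f y_n\big)$, letting $n\to\infty$ (via Lemma \ref{zakhv}, continuity of $B_i$, $\liminf\eta_n>0$, and the maximal monotonicity of $G$) gives $0^*\in(B_i+G)(\omega_0)$, $i=1,2$. \emph{Fixed points.} From Lemma \ref{awqqq} and $D_f(p,y_n),D_f(p,\bar{v}_n)\to L$ one squeezes $D_f(p,W_n\bar{v}_n),D_f(p,S\bar{v}_n)\to L$; since $f^*$ is uniformly convex on bounded subsets (by \cite{p31} applied to $f^*$, with Lemma \ref{zakhv}), the gauge‑refined form of Lemma \ref{awqqq} upgrades this to $\|\nabla f(W_n\bar{v}_n)-\nabla f(S\bar{v}_n)\|\to0$, whence $\|y_n-S\bar{v}_n\|,\|y_n-W_n\bar{v}_n\|\to0$ (uniform continuity of $\nabla f^*$) and so $\|S\bar{v}_n-\bar{v}_n\|,\|W_n\bar{v}_n-\bar{v}_n\|\to0$. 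Demiclosedness of $S$ gives $\omega_0=S\omega_0$; iterating the squeeze/gauge argument through the recursion defining $W_n$ (Proposition \ref{3efrg}(ii)), or passing to the limit mapping $W=\lim_n W_n$, yields for each $r$ a sequence converging strongly to $\omega_0$ with displacement under $T_r$ tending to $0$, so $\omega_0\in\tilde{F}(T_r)=F(T_r)$. Therefore $\omega_0\in\Omega$.

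Finally (4): for $p\in\Omega\subseteq C_n$, Lemma \ref{rtmnw}(iii) gives $D_f(x_n,x_1)\le D_f(p,x_1)-D_f(p,x_n)\le D_f(p,x_1)$; letting $n\to\infty$ (Lemma \ref{zakhv}) gives $D_f(\omega_0,x_1)\le D_f(p,x_1)$ for all $p\in\Omega$, and since $\omega_0\in\Omega$, uniqueness of the Bregman projection yields $\omega_0=Proj_\Omega^f(x_1)$. I expect the main obstacle to be the fixed‑point part of (3): because $W_n$ changes with $n$, no asymptotic‑fixed‑point property of a single operator is directly available, and the only information at hand is that the Bregman distances $D_f(p,\cdot)$ along every auxiliary sequence tend to the common limit $L$; converting this into $\|S\bar{v}_n-\bar{v}_n\|\to0$ and the analogous estimates along the $U_{n,k}$ requires the uniform convexity of $f^*$ (the gauge refinement of Lemma \ref{awqqq}) and a careful accounting of the $W$‑mapping, and it is essential that all these conclusions are obtained with \emph{strong} convergence, since that is precisely what makes $\tilde{F}(T_r)=F(T_r)$ and the demiclosedness of $S$ usable.
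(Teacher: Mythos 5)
Your skeleton and most of the execution coincide with the paper's proof (Steps 1--7 there): the closedness and convexity of $\Omega$ and of each $C_n$, the induction giving $\Omega\subseteq C_n$ via Lemmas \ref{awqqq}, \ref{swdczg} and \ref{vtye}, the equilibrium part via the optimality condition for $t_n^j$, Lemmas \ref{khecpo}--\ref{ehfbqw} and condition A3, and the final identification of the limit with $Proj_\Omega^f(x_1)$ all match. Two places where you genuinely diverge: first, your Cauchy argument ($D_f(x_m,x_n)\le D_f(x_m,x_1)-D_f(x_n,x_1)$ for $m>n$, then Lemma \ref{qpocv}) is actually more complete than the paper's, which only establishes $\|x_{n+1}-x_n\|\to 0$ before asserting that $\{x_n\}$ is Cauchy. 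Second, for the asymptotic regularity of $S$, $W_n$ and $Q_{\eta_n}B^f_{i,\eta_n}$ the paper does not use any gauge refinement of Lemma \ref{awqqq}: it expands $\langle\bar v_n-z,\nabla f(\bar v_n)-\nabla f(y_n)\rangle$ with the three-point identity \eqref{awpon} and bounds it below by $(1-b)D_f(\bar v_n,W_n\bar v_n)+a\,D_f(\bar v_n,S\bar v_n)$, so that $\|\nabla f(\bar v_n)-\nabla f(y_n)\|\to 0$ forces both Bregman distances to zero (and similarly with $y_n,z_n$ for the resolvents). Your squeeze-plus-gauge route also works, but the refined inequality involving the gauge $\rho_r$ of uniform convexity of $f^*$ is nowhere stated in this paper and would have to be imported from the literature; the three-point computation gets the same conclusion from the lemmas actually on hand.

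Two points in your Step (3) need repair. For the zeros of $B_i+G$ you pass to the limit in the resolvent inclusion using continuity of $B_i$ and $\liminf_n\eta_n>0$, neither of which is a hypothesis ($\{\eta_n\}$ is only assumed to lie in $(0,+\infty)$, and a Bregman inverse strongly monotone map need not be continuous). The intended tool is Lemma \ref{vtye}(i)--(ii): $Q_{\eta_n}B^f_{i,\eta_n}$ is Bregman strongly nonexpansive with $\hat{F}(Q_{\eta_n}B^f_{i,\eta_n})=F(Q_{\eta_n}B^f_{i,\eta_n})=(B_i+G)^{-1}0^*$, so $\|y_n-Q_{\eta_n}B^f_{i,\eta_n}y_n\|\to 0$ together with $y_n\to\omega_0$ already places $\omega_0$ in $(B_i+G)^{-1}0^*$, with no continuity of $B_i$ and no lower bound on $\eta_n$. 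For the family $\{T_r\}$, your ``iterate the gauge argument through the $U_{n,k}$ recursion, or pass to $W=\lim_n W_n$'' is left as a plan rather than an argument; the paper short-circuits it by observing that $\|\bar v_n-W_n\bar v_n\|\to 0$ together with the strong convergence $\bar v_n\to\omega_0$ gives $\omega_0\in\tilde{F}(W_n)$, and Proposition \ref{3efrg}(iii) says each $W_n$ is Bregman weak relatively nonexpansive, whence $\tilde{F}(W_n)=F(W_n)=\cap_{i=1}^nF(T_i)$ by Proposition \ref{3efrg}(i); intersecting over $n$ yields $\omega_0\in\cap_{r=1}^\infty F(T_r)$. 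You correctly identified this as the delicate spot, but the paper's resolution is the direct appeal to Proposition \ref{3efrg}, not an analysis of the recursion.
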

  \begin{proof}
   We divide the proof into several steps:

   $Step\,1$: First, we prove that $\Omega$ is a closed and convex subset of $C$.\\
    Since $S$ is a Bregman quasi-nonexpansive mapping, by Lemma \ref{qctas} and the condition $\Omega\neq\emptyset$, $F(S)$ is nonempty, closed and convex. Also, for $i=1,2$ , it follows fom (i)-(ii) of Lemma \ref{vtye}  that $(B_i+G)^{-1}0^*=F(Q_{\eta_n}B_{i,\eta_n}^f)$ and $Q_{\eta_n}B_{i,\eta_n}^f$ is a Bregman strongly nonexpansive mapping and therefore we have that
    \begin{equation*}
      F(Q_{\eta_n}B_{i,\eta_n}^f)=\hat{F}(Q_{\eta_n}B_{i,\eta_n}^f).
    \end{equation*}
    Thus $\{Q_{\eta_n} B_{i,\eta_n}^f\}_{i=1}^2$ is a family of Bregman quasi-nonexpansive mappings. Using $\Omega\neq \emptyset$ and Lemma \ref{qctas}, we have that $(B_i+G)^{-1}0^*=F(Q_{\eta_n} B_{i,\eta_n}^f)$ is a nonempty, closed and convex subset of $C$. We know from Lemma \ref{qctas} that $\cap_{j=1}^\infty F(T_j)$ is closed and convex and also we have from Lemma \ref{aspoxc} that $\cap_{i=1}^N EP(h_i)$ is closed and convex. Then, $\Omega$ is nonempty, closed and convex. Therefore $Proj_\Omega^f$ is well defined.

    $Step\,2$: We prove that $C_n, D_n$ and $M_n$ are closed and convex subsets of $C$ and $\Omega\subseteq C_n\cap D_n\cap M_n$, for each $n\in \mathbb{N}$.\\
    In fact, it is clear that $C_1=C$ and then $C_1$ is closed and convex. We conclude that
     %Suppose that $C_k, D_k$ and $M_k$ are closed and convex for some $k\geq 1$. Note that
    \begin{align*}
    D_f(z,\bar{v}_n)\leq D_f(z,x_n)\Leftrightarrow\langle\nabla f(x_n),z-x_n\rangle-\langle\nabla f(\bar{v}_n),z-\bar{v}_n\rangle\leq f(\bar{v}_n)-f(x_n),
    \end{align*}
     for all $z\in C$. Similarly, we have that
    \begin{align*}
    D_f(z,y_n)&\leq D_f(z,\bar{v}_n)\\
    &\Leftrightarrow f(z)-f(y_n)-\langle\nabla f(y_n),z-y_n\rangle\leq f(z)-f(\bar{v}_n)-\langle\nabla f(\bar{v}_n), z-\bar{v}_n\rangle\\
    &\Leftrightarrow\langle\nabla f(\bar{v}_n),z-\bar{v}_n\rangle-\langle\nabla f(y_n),z-y_n\rangle\leq f(y_n)-f(\bar{v}_n),
    \end{align*}
     for all $z\in E$. Also
        \begin{align*}
    D_f(z,z_n)\leq D_f(z,y_n)\Leftrightarrow\langle\nabla f(y_n),z-(y_n)\rangle-\langle\nabla f(z_n),z-z_n\rangle\leq f(z_n)-f(y_n),
    \end{align*}
     for all $z\in E$. Thus from the fact that $D_f(.,x)$ is continuous for each fixed $x$  and using the above inequalities $\{z\in C: D_f(z,\bar{v}_n)\leq D_f(z,x_n)\}$ and $\{z\in E: D_f(z,y_n)\leq D_f(z,\bar{v}_n)\}$ and $\{z\in E: D_f(z,z_n)\leq D_f(z,y_n)\}$ are closed and convex. Therefore, $C_n$ is a closed and convex subset of $C$ and also $D_n$ and $M_n$ are closed and convex subsets of $E$. Thus, $C_n\cap D_n\cap M_n$ is a closed and convex subset of $C$ for all $n\in \mathbb{N}$.

    Next, we show that $\Omega \subseteq C_n\cap D_n\cap M_n$ for all $n\geq 1$. Clearly $\Omega \subseteq C_1=C$. In fact, from Lemma \ref{swdczg} and our conditions, we have that
    \begin{align}\label{rkmcn}
     D_f(z,\bar{v}_n)\leq D_f(z,x_n)-(1-\alpha_n c_1)D_f(t^j_n,x_n)-(1-\alpha_n c_2)D_f(\bar{v}_n,t^j_n)\leq D_f(z,x_n),
    \end{align}
     for all $z\in \cap_{j=1}^N EP(h_j)$. Since $\cap_{j=1}^N EP(h_j)\subseteq C_n$, then $\Omega \subseteq C_n$ for all $n\in \mathbb{N}$.  From Lemma \ref{awqqq}, we have that
  \begin{align}\label{mner}
    D_f(z,y_n)&= D_f\bigg(z,\nabla f^{*}\big((1-\lambda_n)\nabla fW_n(\bar{v}_n)+\lambda_n\nabla f S(\bar{v}_n)\big)\bigg)\nonumber\\
    &\leq(1-\lambda_n)D_f\big(z,W_n(\bar{v}_n)\big)+\lambda_nD_f\big(z,S(\bar{v}_n)\big)\\
    &\leq(1-\lambda_n)D_f(z,\bar{v}_n)+\lambda_nD_f(z,\bar{v}_n)=D_f(z,\bar{v}_n),\nonumber
  \end{align}
  for all $z\in F(S)\bigcap \big(\cap_{r=1}^\infty F(T_r)\big)$ and for all $n\in \mathbb{N}$. Therefore $\Omega \subseteq D_n$ for all $n\in \mathbb{N}$. Furthermore, since $B_i$ is a Bregman inverse strongly monotone mapping for $i=1,2$, from Lemmas \ref{vtye} and \ref{awqqq} we have that
   \begin{align}\label{mnyu}
    D_f(z,z_n)&=D_f\bigg(z,\nabla f^{*}\big((1-\sigma)\nabla f Q_{\eta_n}B_{1,{\eta_n}}^f(y_n)+\sigma\nabla f Q_{\eta_n}B_{2,{\eta_n}}^f(y_n)\big)\bigg)\nonumber\\
    &\leq (1-\sigma)D_f\big(z,Q_{\eta_n}B_{1,{\eta_n}}^f(y_n)\big)+\sigma D_f\big(z,Q_{\eta_n}B_{2,{\eta_n}}^f(y_n)\big)\\
    &\leq (1-\sigma)D_f(z,y_n)+\sigma D_f(z,y_n)= D_f(z,y_n),\nonumber
  \end{align}
  for all $z\in \cap_{i=1}^2(B_i+G)^{-1}0^*$ and for all $n\in \mathbb{N}$. Therefore, $\Omega \subseteq M_n$ for all $n\in \mathbb{N}$. Thus, $\Omega \subseteq C_n\cap D_n\cap M_n$ for all $n\geq 1$.

  $Step\,3$: We show that $\displaystyle\lim_{n\rightarrow \infty} D_f(x_n,x_1)$ and $\displaystyle\lim_{n\rightarrow \infty}x_n$  exist.\\
  Since $\Omega$ is nonempty, closed and convex, there exists an element $\omega_0\in \Omega$ such that $\omega_0=Proj_{\Omega}^f(x_1)$. From $x_{n+1}=Proj_{C_n\cap D_n\cap M_n}^f(x_1)$, we get that
  \begin{equation*}
    D_f(x_{n+1},x_1)\leq D_f(z,x_1),
  \end{equation*}
  for all $z\in C_n\cap D_n\cap M_n$. Then from $\omega_0\in \Omega \subseteq C_n\cap D_n\cap M_n$, we obtain that
  \begin{equation}\label{bher}
  D_f(x_{n+1},x_1)\leq D_f(\omega_0,x_1).
  \end{equation}
  This shows that $\{D_f(x_n,x_1)\}$ is a bounded sequence. By (iii) of Lemma \ref{rtmnw}, we have that
  \begin{align}\label{jkwe}
  0&\leq D_f(x_{n+1},x_n)=D_f\big(x_{n+1},Proj_{C_n\cap D_n\cap M_n}^f(x_1)\big)\nonumber\\
  &\leq D_f(x_{n+1},x_1)-D_f\big(Proj_{C_n\cap D_n\cap M_n}^f(x_1),x_1\big)\\
  &=D_f(x_{n+1},x_1)-D_f(x_n,x_1),\nonumber
  \end{align}
  for all $n\geq 1$. Therefore
  \begin{equation}\label{fgrt}
  D_f(x_n,x_1)\leq D_f(x_{n+1},x_1).
  \end{equation}
  This implies that $\{D_f(x_n,x_1)\}$ is bounded and nondecreasing. Then  $\displaystyle\lim_{n\rightarrow \infty}D_f(x_n,x_1)$ exists. In view of Lemma \ref{qwpo}, we deduce that the sequence $\{x_n\}$ is bounded. Also, from \eqref{jkwe}, we have that
  \begin{equation}\label{dfpo}
  \lim_{n\rightarrow \infty} D_f(x_{n+1},x_n)=0.
  \end{equation}
  Since the function $f$ is totally convex on bounded sets, by Lemma \ref{qpocv}, we have that
  \begin{equation}\label{asol}
  \lim_{n\rightarrow \infty}\|x_{n+1}-x_n\|=0.
  \end{equation}
  %$Step\,4$: We prove that $\{x_n\}$ is a Cauchy sequence in $C$.\\
 % We have $C_m\subseteq C_n$ and for any $m,n\in \mathbb{N}$ with $m\geq n$. From $x_n=Proj_{C_n}^f(x_1)\in C_n$ and Lemma \ref{rtmnw} we have that
  %\begin{align}\label{asno}
 % D_f(x_m,x_n)&=D_f\big(x_m,Proj_{C_n}^f(x_1)\big)\nonumber\\
  %&\leq D_f(x_m,x_1)-D_f\big(Proj_{C_n}^f(x_1),x_1\big)\\
 % &=D_f(x_m,x_1)-D_f(x_n,x_1).\nonumber
  %\end{align}
  %Letting $m,n\rightarrow \infty$ in \eqref{asno}, we deduce that $D_f(x_m,x_n)\rightarrow 0$. In view of Lemma \ref{qpocv}, since the function $f$ is totally convex on bounded sets,we get that $\|x_m-x_n\|\rightarrow0$ as $m,n\rightarrow \infty$.
   Thus $\{x_n\}$ is a Cauchy sequence. Then, we conclude that there exists $\bar{x}\in C$ such that
  \begin{align}\label{awlo}
   \displaystyle\lim_{n\rightarrow \infty}\|x_n-\bar{x}\|=0.
  \end{align}

  $Step\,4$: We prove that $\displaystyle\lim_{n\rightarrow \infty}\|\nabla f(x_n)-\nabla f(v_n^j)\|=0$, for all $1\leq j\leq N$.\\
  From $x_{n+1}\in C_n$, we get that
  \begin{align}\label{azlk}
  D_f(x_{n+1},\bar{v}_n)\leq D_f(x_{n+1},x_n),
  \end{align}
  and using \eqref{dfpo} we have
  \begin{align}\label{3qjhdv}
   \displaystyle\lim_{n\rightarrow \infty}D_f(x_{n+1},\bar{v}_n)=0.
  \end{align}
   We conclude from \eqref{awlo}, the condition A3 and Lemma \ref{zakhv} that $\{\bar{v}_n\}_{n\in \mathbb{N}}$ is bounded. From the totally convexity of $f$ on bounded sets, by Lemma \ref{qpocv} and \eqref{3qjhdv}, we have that
   \begin{equation}\label{3afgsqv}
  \lim_{n\rightarrow \infty}\|x_{n+1}-\bar{v}_n\|=0.
  \end{equation}
    Using \eqref{asol} and \eqref{3afgsqv}, we conclude that
  \begin{align}\label{astp}
  \lim_{n\rightarrow \infty}\|x_n-\bar{v}_n\|=0.
  \end{align}
%    From Lemma \ref{zakhv}, $\nabla f$ is uniformly continuous on bounded subsets of $E$, then it is implied from \eqref{astp} that
%  \begin{equation}\label{awmx}
%  \lim_{n\rightarrow \infty}\|\nabla f(x_n)-\nabla f(\bar{v}_n)\|=0.
%  \end{equation}
Since the function $f$ is continuous on $E$, from \eqref{astp},  $\displaystyle\lim_{n\rightarrow \infty}(f(\bar{v}_n)-f(x_n))=0$, and so, from the definition of the Bregman distance, we obtain $\displaystyle\lim_{n\rightarrow \infty} D_f(\bar{v}_n,x_n)=0$. Using the definition of $j_n$, we conclude that $\displaystyle\lim_{n\rightarrow \infty} D_f(v_n^j,x_n)=0$, and therefore from Lemma \ref{qpocv}, $\displaystyle\lim_{n\rightarrow \infty}\|v_n^j-x_n\|=0$. From Lemma \ref{zakhv}, $\nabla f$ is uniformly continuous on bounded subsets of $E$, then we have that
  \begin{equation}\label{awmx}
  \lim_{n\rightarrow \infty}\|\nabla f(v_n^j)-\nabla f(x_n)\|=0,
 \end{equation}
 for all $1\leq j\leq N$.

  $Step\,5$: We show that $\displaystyle\lim_{n\rightarrow \infty}\|\nabla f(\bar{v}_n)-\nabla f(y_n)\|=0$.\\
  From $x_{n+1}\in C_n\cap D_n$, we get that
  \begin{align}\label{azlk}
  D_f(x_{n+1},y_n)\leq D_f(x_{n+1},x_n),%\rightarrow 0\,\;(as\;\;n\rightarrow \infty),
  \end{align}
  and using \eqref{dfpo} we have
  \begin{align}\label{3wedv}
   \displaystyle\lim_{n\rightarrow \infty}D_f(x_{n+1},y_n)=0.
  \end{align}
     The function $f$ is bounded on bounded subsets of $E$ and so $\nabla f$ and $\nabla f^*$ are also bounded on bounded subsets of $E^*$ and $E$ respectively (see, for example, \cite[Proposition 1.1.11]{p11}, for more details). The boundedness of $\{x_n\}_{n\in \mathbb{N}}$ implies that $\{D_f(z,x_n)\}_{n\in \mathbb{N}}$ is bounded for all $z\in E$ and from the definitions of Bregman weak relatively nonexpansive mappings and Bregman quasi nonexpansive mappings, $\{D_f(p,W_n(\bar{v}_n))\}_{n\in \mathbb{N}}$ and $\{D_f(p,S(\bar{v}_n))\}_{n\in \mathbb{N}}$ are bounded  for all $p\in F(S)\cap (\bigcap_{r=1}^\infty F(T_r))$, respectively and therefore from Lemma \ref{fdyuq}, $\{S(\bar{v}_n)\}_{n\in \mathbb{N}}$ and $\{W_n(\bar{v}_n)\}_{n\in \mathbb{N}}$ are bounded. This implies that the sequences $\{\nabla fS(\bar{v}_n)\}_{n\in \mathbb{N}}$ and $\{\nabla fW_n(\bar{v}_n)\}_{n\in \mathbb{N}}$ are bounded in $E^*$. Therefore, $\{y_n\}$ is bounded. From the totally convexity of $f$ on bounded sets, by Lemma \ref{qpocv} and \eqref{3wedv}, we have that
  \begin{equation*}%\label{asqv}
  \lim_{n\rightarrow \infty}\|x_{n+1}-y_n\|=0,
  \end{equation*}
  then using \eqref{asol}, we get that
  \begin{align}\label{astp1g}
  \lim_{n\rightarrow \infty}\|x_n-y_n\|=0,
  \end{align}
  hence, applying \eqref{astp}, we conclude that
    \begin{align}\label{abngher}
  \lim_{n\rightarrow \infty}\|\bar{v}_n-y_n\|=0.
  \end{align}
   From Lemma \ref{zakhv}, $\nabla f$ is uniformly continuous on bounded subsets of $E$,  then we have from \eqref{abngher} that
  \begin{equation}\label{ertawmx}
  \lim_{n\rightarrow \infty}\|\nabla f(\bar{v}_n)-\nabla f(y_n)\|=0.
  \end{equation}
  %Using \eqref{awmx} and \eqref{ertawmx}, it is concluded that
 % \begin{equation}\label{eklowmx}
 % \lim_{n\rightarrow \infty}\|\nabla f(\bar{v}_n)-\nabla f(y_n)\|=0.
 % \end{equation}

 $Step\,6$: We prove that $\displaystyle\lim_{n\rightarrow \infty}\|\nabla f(y_n)-\nabla f(z_n)\|=0$.\\
  Using \eqref{3wedv} and from $x_{n+1}\in M_n$, we get that
  \begin{align}\label{aedm}
  D_f(x_{n+1},z_n)\leq D_f(x_{n+1},y_n)\rightarrow 0\,\;(as\;\;n\rightarrow \infty).
  \end{align}
   The boundedness of $\{y_n\}_{n\in \mathbb{N}}$ implies that $\{D_f(z,y_n)\}_{n\in \mathbb{N}}$  is bounded for all $z\in E$. Hence, from the definition of Bregman strongly nonexpansive mapping and Lemma \ref{vtye}, $\{D_f(p,Q_{\eta_n}B_{i,{\eta_n}}^f(y_n))\}_{n\in \mathbb{N}}$ is bounded for $i=1, 2$ and for all $p\in \cap_{i=1}^2(B_i+G)^{-1}0^*$. Therefore from Lemma \ref{fdyuq}, $\{Q_{\eta_n}B_{i,{\eta_n}}^f(y_n)\}_{n\in \mathbb{N}}$ is bounded for $i=1, 2$. This implies that the sequences $\{\nabla fQ_{\eta_n}B_{i,{\eta_n}}^f(y_n)\}_{n\in \mathbb{N}}$ is bounded for $i=1, 2$. Therefore, $\{z_n\}$ is bounded.
From the totally convexity of $f$ on bounded sets, by Lemma \ref{qpocv} and \eqref{aedm}, we conclude that
  \begin{align}\label{aqhn}
  \lim_{n\rightarrow \infty}\|x_{n+1}-z_n\|=0.
  \end{align}
   Using \eqref{asol} and \eqref{aqhn}, we have that
  \begin{align}\label{aqhnui}
  \lim_{n\rightarrow \infty}\|x_n-z_n\|=0.
  \end{align}
  Applying \eqref{astp1g} and \eqref{aqhnui}, we get that
  \begin{align}\label{aqekgi}
  \lim_{n\rightarrow \infty}\|y_n-z_n\|=0.
  \end{align}
  Then from the uniformly continuity of $\nabla f$, we have that
  \begin{align}\label{awlfm}
  \lim_{n\rightarrow \infty}\|\nabla f(y_n)-\nabla f(z_n)\|=0.
  \end{align}

  $Step\,7$: We prove that $\bar{x}\in \Omega$.\\
  First, we show that $\bar{x}\in \cap_{j=1}^N EP(h_j)$. Using Lemma \ref{swdczg}, we conclude that
  \begin{equation}\label{bnhej}
    D_f(\omega_0,v^j_n)\leq D_f(\omega_0,x_n)-(1-\alpha_n c_1)D_f(t^j_n,x_n)-(1-\alpha_n c_2)D_f(v^j_n,t^j_n).
  \end{equation}
  It follows from \eqref{awpon}, \eqref{bnhej} and our conditions that
  \begin{align}\label{spongh}
    (1-\alpha_n c_1)D_f(t^j_n,x_n)&\leq D_f(\omega_0,x_n)-D_f(\omega_0,v^j_n)\nonumber\\
   &\leq D_f(\omega_0,x_n)-D_f(\omega_0,v^j_n)+D_f(x_n,v^j_n)\nonumber\\
   &=\langle \omega_0-x_n, \nabla f(v^j_n)-\nabla f(x_n)\rangle.
  \end{align}
    From \eqref{awmx}, \eqref{spongh} and the boundedness of the sequence $\{x_n\}_{n\in \mathbb{N}}$ and our conditions, we get $\lim_{n\rightarrow \infty}D_f(t^j_n,x_n)=0$, and therefore by Lemma \ref{qpocv},
    \begin{align}\label{asfbtm}
  \lim_{n\rightarrow \infty}\|t^j_n-x_n\|=0.
  \end{align}
   for all $1\leq j\leq N$.
  Also, using the fact that $D_f(., x)$ is convex, differentiable and lower semicontinuous on $\textit{int dom}f$, it follows from Proposition \ref{aplenfg} that $\partial_1D_f(x, y)=\{\nabla f(x)-\nabla f(y)\}$. Since
  \begin{equation*}
  t^j_n= argmin \big\{\alpha_nh_j(x_n,u)+D_f(u, x_n): u\in C\big\}.
  \end{equation*}
  by Lemmas \ref{khecpo}, \ref{ehfbqw} and the condition A4, we obtain
  \begin{align*}
   0\in \partial \{\alpha_n h_j(x_n, t^j_n)&+D_f(t^j_n,x_n)\}+N_C(t^j_n)\\
   &=\partial_2\alpha_n h_j(x_n, t^j_n)+\partial_1D_f(t^j_n,x_n)+N_C(t^j_n).
  \end{align*}
  Therefore, there exist $\xi^j_n \in \partial_2 h_j(x_n, t^j_n)$ and $\bar{\xi}^j_n \in N_C(t^j_n)$, such that
  \begin{equation}\label{kjhhbc}
   \alpha_n\xi^j_n+ \nabla f(t^j_n)- \nabla f(x_n)+ \bar{\xi}^j_n=0.
  \end{equation}
  Since $\bar{\xi}^j_n\in N_C(t^j_n)$, then $\langle y-t^j_n,\bar{\xi}^j_n\rangle \leq0$ for all $y\in C$. This together with \eqref{kjhhbc} implies that
  \begin{equation}\label{ekjng}
  \alpha_n\langle y-t^j_n,\xi^j_n\rangle \geq \langle t^j_n-y, \nabla f(t^j_n)-\nabla f(x_n)\rangle,\quad \forall y\in C.
  \end{equation}
  Since $\xi^j_n \in \partial_2 h_j(x_n, t^j_n)$, we have
  \begin{equation}\label{kjxcer}
   h_j(x_n,y)-h_j(x_n,t^j_n)\geq \langle y-t^j_n,\xi^j_n\rangle,\quad \forall y\in C.
  \end{equation}
  Using \eqref{ekjng} and \eqref{kjxcer}, we get
  \begin{equation}\label{dkvbqw}
    \alpha_n[h_j(x_n,y)-h_j(x_n,t^j_n)]\geq \langle t^j_n-y, \nabla f(t^j_n)-\nabla f(x_n)\rangle,\quad \forall y\in C.
  \end{equation}
  From \eqref{awlo} and \eqref{asfbtm}, we find $t^j_n\rightarrow \bar{x}$. Letting $n\rightarrow \infty$ in
\eqref{dkvbqw} and using A3 and our conditions, we conclude that $h_j(\bar{x},y)\geq 0$ for all $y\in C$ and any $j = 1, 2, \ldots, N$. Thus, $\bar{x}\in \cap_{j=1}^N EP(h_j)$.

Since $S$ is a Bregman quasi-nonexpansive mapping and $W_n$ is a $W$-mapping for all $n\in \mathbb{N}$ and $0<a\leq\lambda_n\leq b<1$, then we get that
   \begin{align*}
  \langle \bar{v}_n-z, \nabla f(\bar{v}_n)-\nabla f(y_n)\rangle
  &=\big\langle \bar{v}_n-z,\nabla f(\bar{v}_n)-\nabla f\nabla f^*\big((1-\lambda_n)\nabla fW_n(\bar{v}_n)+\lambda_n\nabla f S(\bar{v}_n)\big)\big\rangle\\
  &=\big\langle \bar{v}_n-z,\nabla f(\bar{v}_n)-\big((1-\lambda_n)\nabla fW_n(\bar{v}_n)+\lambda_n\nabla f S(\bar{v}_n)\big)\big\rangle\\
  &=(1-\lambda_n)\big\langle \bar{v}_n-z,\nabla f(\bar{v}_n)-\nabla fW_n(\bar{v}_n)\big\rangle\\
  &\qquad\qquad\qquad\qquad\;\;+\lambda_n\big\langle \bar{v}_n-z,\nabla f(\bar{v}_n)-\nabla f S(\bar{v}_n)\big\rangle\\
  &= (1-\lambda_n)\big(D_f(z,\bar{v}_n)+D_f(\bar{v}_n,W_n(\bar{v}_n))-D_f(z,W_n(\bar{v}_n))\big)\\
  &\qquad\qquad\qquad\qquad\;\;+\lambda_n \big(D_f(z,\bar{v}_n)+D_f(\bar{v}_n,S(\bar{v}_n))-D_f(z,S(\bar{v}_n))\big)\\
   &\geq (1-b)\big(D_f(z,\bar{v}_n)+D_f(\bar{v}_n,W_n(\bar{v}_n))-D_f(z,\bar{v}_n)\big)\\
  &\qquad\qquad\qquad\qquad\;\;+a \big(D_f(z,\bar{v}_n)+D_f(\bar{v}_n,S(\bar{v}_n))-D_f(z,\bar{v}_n)\big)\\
  &=(1-b)D_f(\bar{v}_n,W_n(\bar{v}_n))+aD_f(\bar{v}_n,S(\bar{v}_n)),
  \end{align*}
 for all $z\in \bigcap_{i=1}^{\infty}F(T_{i}) \cap F(S)$. Then, we have from \eqref{ertawmx} and continuity $D_f$  that
   $ \displaystyle\lim_{n\rightarrow\infty}D_f(\bar{v}_n,S(\bar{v}_n))=0$ and $\displaystyle\lim_{n\rightarrow\infty}D_f(\bar{v}_n,W_n(\bar{v}_n))=0$. Then, we have from Lemma \ref{qpocv} that
   \begin{align}\label{aqoxg}
  \lim_{n\rightarrow \infty}\|\bar{v}_n-S(\bar{v}_n)\|=0,
  \end{align}
  and also
  \begin{align}\label{aqoxg2}
  \lim_{n\rightarrow \infty}\|\bar{v}_n-W_n(\bar{v}_n)\|=0.
  \end{align}
   On the other hand, from \eqref{awlo} and \eqref{astp}, we have that
   \begin{align}\label{3xhkn}
    \lim_{n\rightarrow \infty}\|\bar{v}_n-\bar{x}\|=0.
   \end{align}
   Since $S$ is demiclosed and from \eqref{aqoxg} and \eqref{3xhkn}, we conclude that $\bar{x}\in F(S)$. Also from \eqref{aqoxg2} and \eqref{3xhkn}, we conclude $\bar{x}\in \tilde{F}(W_n)$ and from Proposition \ref{3efrg}, $W_n$ is  Bregman weak relatively nonexpansive, hence, we get that $\bar{x}\in\bigcap_{i=1}^nF(T_{i})$ for all $n\in \mathbb{N}$. Therefore $\bar{x}\in\bigcap_{i=1}^{\infty}F(T_{i})$.
   Now, we show that $\bar{x}\in\cap_{i=1}^2(B_i+G)^{-1}0^*$. From \eqref{awpon} and (iii) of Lemma \ref{vtye}, we get that
    \begin{align*}
 \langle y_n-z,\nabla f(y_n) &-\nabla f(z_n)\rangle\\
  &=\big\langle y_n-z,\nabla f(y_n)-\nabla f\nabla f^{*}\big((1-\sigma)\nabla f Q_{\eta_n}B_{1,{\eta_n}}^f(y_n)+\sigma\nabla f Q_{\eta_n}B_{2,{\eta_n}}^f(y_n)\big)\rangle\\
  % &=\big\langle y_n-z,\nabla f(y_n)-\big((1-\sigma)\nabla f Q_{\eta_n}B_{1,{\eta_n}}^f(y_n)+\sigma\nabla f Q_{\eta_n}B_{2,{\eta_n}}^f(y_n)\big)\big\rangle\\
   &=(1-\sigma)\big\langle y_n-z,\nabla f(y_n)-\nabla f Q_{\eta_n}B_{1,{\eta_n}}^f(y_n)\big\rangle\\
   &\qquad\qquad\qquad\,+\sigma\big\langle y_n-z,\nabla f(y_n)-\nabla f Q_{\eta_n}B_{2,{\eta_n}}^f(y_n)\big\rangle\\
  &=(1-\sigma)\big(D_f(z,y_n)+D_f(y_n,Q_{\eta_n}B_{1,{\eta_n}}^fy_n)-D_f(z,Q_{\eta_n}B_{1,{\eta_n}}^fy_n)\big)\\
  &\qquad\qquad\qquad\,+\sigma\big(D_f(z,y_n)+D_f(y_n,Q_{\eta_n}B_{2,{\eta_n}}^fy_n)-D_f(z,Q_{\eta_n}B_{2,{\eta_n}}^fy_n)\big)\\
  &\geq (1-\sigma)\big(D_f(z,y_n)+D_f(y_n,Q_{\eta_n}B_{1,{\eta_n}}^fy_n)-D_f(z,y_n)\big)\\
  &\qquad\qquad\qquad\,+\sigma\big(D_f(z,y_n)+D_f(y_n,Q_{\eta_n}B_{2,{\eta_n}}^fy_n)-D_f(z,y_n)\big)\\
 &=(1-\sigma)D_f(y_n,Q_{\eta_n}B_{1,{\eta_n}}^fy_n)+\sigma D_f(y_n,Q_{\eta_n}B_{2,{\eta_n}}^fy_n)
  \end{align*}
  for all $z\in \cap_{i=1}^2(B_i+G)^{-1}0^*$. Then, we have from \eqref{awlfm} that
  $\displaystyle\lim_{n\rightarrow \infty}D_f(y_n,Q_{\eta_n}B_{i,{\eta_n}}^fy_n)=0$ for $i=1,2$. Then, we have from Lemma \ref{qpocv}, that
  \begin{align}\label{azjku}
  \displaystyle\lim_{n\rightarrow \infty}\|y_n-Q_{\eta_n}B_{i,{\eta_n}}^fy_n\|=0,
  \end{align}
  for $i=1,2$. On the other hand, it follows from \eqref{awlo} and \eqref{astp1g} that
  \begin{align}\label{aoeng}
  \displaystyle\lim_{n\rightarrow \infty}\|y_n-\bar{x}\|=0.
  \end{align}
  Now, from \eqref{azjku} and \eqref{aoeng}, we have that $\bar{x}\in\hat{F}(Q_{\eta_n}B_{i,{\eta_n}})$ for $i=1,2$. From Lemma \ref{vtye}, we conclude that
  \begin{align*}
  \hat{F}(Q_{\eta_n}B_{i,{\eta_n}})=F(Q_{\eta_n}B_{i,{\eta_n}})=(B_i+G)^{-1}0^*,\quad i=1,2,
  \end{align*}
  and therefore $\bar{x}\in\cap_{i=1}^2(B_i+G)^{-1}0^*$. Thus $\bar{x}\in\Omega$.
  Since $\omega_0=Proj_{\Omega}^f(x_1)$ and $\|x_n-\bar{x}\|\rightarrow0$, we have from \eqref{bher} that
   \begin{align*}
   D_f(\omega_0,x_1)\leq D_f(\bar{x},x_1)=\displaystyle\lim_{n\rightarrow \infty}D_f(x_n,x_1)\leq D_f(\omega_0,x_1),
   \end{align*}
   therefore,
  \begin{align}\label{qaswe}
   D_f(\bar{x},x_1)=D_f(\omega_0,x_1).
  \end{align}
   From \eqref{mnzx}, \eqref{awlo}, Lemma \ref{rtmnw} and $\omega_0\in \Omega\subseteq C_n \cap D_n \cap M_n$ for all $n\in\mathbb{N}$ and also from uniformly continuity on bounded subsets of $\nabla f$, we have that
   \begin{align*}\label{lkscv}
    \langle x_{n+1}-\omega_0,\nabla f(x_1)-\nabla f(x_{n+1})\rangle\geq 0 &\Rightarrow
    \lim_{n\rightarrow \infty}\langle x_{n+1}-\omega_0,\nabla f(x_1)-\nabla f(x_{n+1})\rangle\geq 0\\
    &\Rightarrow \langle \bar{x}-\omega_0,\nabla f(x_1)-\nabla f(\bar{x})\rangle\geq 0.
   \end{align*}
    Hence, from \eqref{awpon} and \eqref{qaswe}, we get that
    \begin{align*}
      D_f(\omega_0,\bar{x})&= D_f(\omega_0,\bar{x})+D_f(\bar{x},x_1)-D_f(\omega_0,x_1)=\langle \omega_0-\bar{x},\nabla f(x_1)-\nabla f(\bar{x})\rangle\leq 0.
    \end{align*}
    Therefore $D_f(\omega_0,\bar{x})=0$. Thus $\omega_0=\bar{x}$ and hence $x_n\rightarrow \omega_0$. This completes the proof.
  \end{proof}
    \section{\textbf{Applications}}
In this section, using Theorem \ref{asli}, a new strong convergence theorem in Banach spaces will be established. Let $E$ be a Banach space and $f(x)=\frac{1}{2}\|x\|^2$ for all $x\in E$, then we have that $\nabla f(x)=J(x)$ for all $x\in E$. The normalized duality mapping $J:E\rightarrow2^{E^*}$ is defined by
\begin{equation*}
J(x)=\{j(x)\in E^*:\langle j(x),x\rangle=\|x\|.\|j(x)\|\;,\;\|j(x)\|=\|x\|\}.
\end{equation*}
%Recall the definition of the subdifferential $\partial f$ of $f$ as follows:
%\begin{equation*}
%\partial f(x)=\lbrace z^*\in E^*:f(x)+\langle y-x,z^* \rangle \leq f(y),\,\,\forall y\in E\rbrace
%\end{equation*}
%for all $x\in E$.
Hence, $D_f(.,.)$ reduces to the usual map $\frac{1}{2}\phi(.,.)$ as
\begin{equation}
D_f(x,y)=\frac{1}{2}\phi(x,y)=\frac{1}{2}\Vert x\Vert^{2}-\langle x,Jy\rangle +\frac{1}{2}\Vert y\Vert^{2}, \quad \forall x,y\in E.
\end{equation}
If $E$ is a Hilbert space, then $D_f(x,y)=\frac{1}{2}\|x-y\|^2$. For more details, we refer the reader to \cite{3qq11}.

Let $E$ be a smooth, strictly convex, and reflexive Banach space and $C$ a nonempty, closed and convex subset of $E$, then we have that $\Pi_C(x)=proj_C^{\frac{1}{2}\Vert x\Vert^{2}}(x)$ (see, e.g., \cite{3qq11}). Recall that the generalized projection $\Pi_C$ from $E$ onto $C$ is defined and denoted by
\begin{equation*}
  \Pi_C(x)=\text{arg}\,\min_{y\in C}\phi(y,x),\quad \forall x\in E.
\end{equation*}
We know that, if $E$ be a smooth, strictly convex, and reflexive Banach space and $C$ be a nonempty, closed and convex subset of $E$ and $x\in E$, $u\in C$, then we have that:
\begin{align*}
  u=\Pi_C(x)\Leftrightarrow \langle y-u,Jx-Ju\rangle \leq 0,\,\,\,\forall y\in C,
\end{align*}
(see \cite{3qq11,3qq112,p14.54}).

 It is known that $\partial f$ is a maximal monotone operator (see Rockafellar \cite{p23.1}). Let $C$ be a nonempty, closed and convex subset of $E$ and $i_C$ be the indicator function of $C$, i.e.,
\begin{equation*}
i_C(x)= \left\{
\begin{array}{r}
0 \quad x\in C,\\
\,\infty \quad x\notin C.
\end{array} \right.
\end{equation*}
 Then $i_C$ is a proper, lower semicontinuous and convex function on $E$ and hence, the subdifferential $ \partial i_C$ of $i_C$ is a maximal monotone operator. Therefore, the generalized resolvent $Q_\eta $ of $ \partial i_C$ for $\eta>0$ is defined as follows:
 \begin{equation}
 Q_\eta x=(J+\eta \partial i_C)^{-1}Jx, \quad \forall x\in E.
 \end{equation}
 For any $x\in E$ and $u\in C$, the following relations hold:
 \begin{align*}
 u=&Q_\eta(x) \Leftrightarrow Jx\in Ju+\eta \partial i_Cu\\
 &\Leftrightarrow \frac{1}{\eta}(Jx-Ju)\in  \partial i_Cu\\
 &\Leftrightarrow i_Cy\geq \langle y-u,\frac{1}{\eta}(Jx-Ju)\rangle +i_Cu,\,\,\,\forall y\in E\\
 &\Leftrightarrow 0\geq \langle y-u,\frac{1}{\eta}(Jx-Ju)\rangle ,\,\,\,\forall y\in C\\
 &\Leftrightarrow \langle y-u,Jx-Ju\rangle \leq 0,\,\,\,\forall y\in C\\
 &\Leftrightarrow u=\Pi_C(x).
  \end{align*}

  Let $f(x)=\frac{1}{2}\|x\|^2$ for all $x\in E$ and we replace $D_f(.,.)$ by $\frac{1}{2}\phi(.,.)$ and $\nabla f$ by $J$
   %in A1-A5, the definition of Bregman weak relatively nonexpansive, Bregman $W$-mapping, Bregman quasi-nonexpansive mapping, Bregman inverse strongly monotone mapping and Bregman-Lipschitz coeffients
   in Theorem \ref{asli}. Now the following theorem can be concluded from Theorem \ref{asli}:
  \begin{thm}\label{sdo3mn}
Let $E$ be a smooth, strictly convex and real reflexive Banach space. Let $C$ be a nonempty, closed and convex subset of $E$. Let for $j=1, 2, \ldots, N$, $h_j:C\times C\rightarrow \mathbb{R}$ be a bifunction satisfying $A1- A5$. Suppose $\{T_n\}_{n\in \mathbb{N}}$ is a family of weak relatively nonexpansive mappings of $C$ into itself and let $\{\beta_{n,k}:k,n\in \mathbb{N}, 1\leq k\leq n\}$ be a sequence of real numbers such that $0<\beta_{n,1}\leq 1$ and $0<\beta_{n,i}<1$ for all $n\in \mathbb{N}$ and $i=2, 3, ..., n$. Suppose $W_n$ is the $W$-mapping generated by $T_n, T_{n-1}, ..., T_1$ and $\beta_{n,n}, \beta_{n,n-1}, ..., \beta_{n,1}$. Let $S:C\rightarrow C$ be a quasi-nonexpansive mapping and demiclosed mapping. Suppose $\{B_i\}_{i=1}^2$ is a family of two inverse strongly monotone mappings of $C$ into $E$. Assume that% Let $G:E\rightarrow 2^{E^*}$ be a maximal monotone mapping on $E$ and let $ Q_\eta=Res_{\eta G}^f = (J+\eta G)^{-1}J$ be the resolvent of $G$ for $\eta>0$.

 \begin{equation*}
  \Omega =\big(\cap_{j=1}^N EP(h_j)\big)\bigcap F(S)\bigcap \big(\cap_{r=1}^\infty F(T_r)\big)\bigcap \big(\cap_{i=1}^2(B_i+\partial i_C)^{-1}0^*\big)\neq \emptyset.
  \end{equation*}
  Let $\{x_n\}$, $\{y_n\}$, $\{z_n\}$ and $\{\bar{v}_n\}$ be the sequences defined by
   \begin{align}\label{mnzxq}
  \begin{cases}
  x_1=x\in C, \quad chosen\;\; arbitrarily,\\
  C_1=C,\\
  t^j_n= argmin \big\{\alpha_nh_j(x_n,u)+\frac{1}{2}\phi(u, x_n) : u\in C\big\},\quad j=1,2,\ldots, N,\\
  v^j_n= argmin \big\{\alpha_nh_j(t^j_n,u)+\frac{1}{2}\phi(u, x_n): u\in C\big\},\quad j=1,2,\ldots, N,\\
  j_n\in Argmax\big\{\phi(v^j_n, x_n), j=1,2,\ldots, N\big\},\quad \bar{v}_n=v^{j_n}_n,\\
  y_n=J^{-1}\big((1-\lambda_n)JW_n(\bar{v}_n)+\lambda_nJS(\bar{v}_n)\big),\\
  z_n=J^{-1}\big((1-\sigma)J\Pi_CJ^{-1}(J-\eta_nB_1)(y_n)+\sigma J\Pi_CJ^{-1}(J-\eta_nB_2)(y_n)\big),\\
  %u_n= J_{r_n}z_n,\\
  C_n=\big\lbrace z\in C: D_f(z,\bar{v}_n)\leq D_f(z,x_n)\big\rbrace,\\
  D_n=\big\lbrace z\in E: D_f(z,y_n)\leq D_f(z,\bar{v}_n)\big\rbrace,\\
  M_n=\big\lbrace z\in E: D_f(z,z_n)\leq D_f(z,y_n)\big\rbrace,\\
  x_{n+1}=\Pi_{C_n\cap D_n\cap M_n}(x_1),\quad \forall n\in \mathbb{N},
  \end{cases}
  \end{align}
where $\{\eta_n\}\subseteq(0,+\infty)$, $0<\sigma<1$ and $a,b\in \mathbb{R}$ be such that for all $n\in \mathbb{N}$, $0<a\leq\lambda_n\leq b<1$ and also, $\{\alpha_n\}\subseteq [r, s]\subseteq (0, p)$, where $p=min\{\frac{1}{c_1}, \frac{1}{c_2}\}$, $c_1=max_{1\leq j\leq N}c_{j,1}$, $c_2=max_{1\leq j\leq N}c_{j,2}$ where $c_{j,1}, c_{j,2}$ are the Lipschitz coeffients of $h_j$ for all $1\leq j\leq N$.\\
Then $\{x_n\}$ converges strongly to a point $\omega_0\in \Omega$ where $\omega_0=\Pi_\Omega(x_1)$.
  \end{thm}
  %\section{\textbf{Application to variational inequality}}
Next, we consider the particular equilibrium problem corresponding to the function $h$ defined for all $x,y\in C$ by $h(x,y)=\langle y-x,Ax\rangle$ with $A:C\rightarrow E^*$. Now, we consider the following classical variational inequality:
\begin{equation*}
  \text{Find}\quad z\in C\quad\text{such that}\quad \langle y-z,Az\rangle\geq0,\quad \forall y\in C.
\end{equation*}
The set of solutions of this problem is denoted by $VI(A,C)$. 
%Variational inequalities have turned out to be very useful in studying optimization problems, differential equations, minimax theorems and in certain applications to mechanics and economic theory. Important practical situations motivate the study of variational inequalities (see \cite{p16} and the references therein)
Let $E$ be a real Banach space and $1\leq q\leq 2\leq p$ with $\frac{1}{p}+\frac{1}{q}=1$ and Suppose $\delta_E$ is a modulus of convexity of E. The $E$ is called $p$-uniformly convex if there is a $c_p\geq0$ so that $\delta_E(\varepsilon)\geq c_p\varepsilon^p$ for all $\varepsilon\in (0,2]$. The modulus of smoothness $\rho_E(\tau):[0,\infty)\rightarrow[0,\infty)$ is defined by
\begin{equation*}
 \rho_E(\tau)=sup\big\{\frac{\|x+\tau y\|+\|x-\tau y\|}{2}-1:\;\|x\|=\|y\|=1\big\}.
\end{equation*}
where $E$ is called uniformly smooth if $\displaystyle\lim_{\tau\rightarrow0}\frac{\rho_E(\tau)}{\tau}=0$.
For the $p$-uniformly convex space, the metric and Bregman distance have the following relation \cite{p24q1a}:
\begin{equation}\label{kcv45t}
  \tau\|x-y\|^p\leq D_{\frac{1}{p}\|.\|^p}(x,y)\leq\langle x-y,J_p(x)-J_p(y)\rangle,
\end{equation}
where $\tau\geq0$ is a fixed number. We know that $E$ is smooth if and only if $J_p$ is single valued mapping of $E$ into $E^*$. We also know that $E$ is reflexive if and only if $J_p$ is surjective, and $E$ is strictly convex if and only if $J_p$ is one-to-one. Therefore, if $E$ is smooth, strictly convex and reflexive Banach space, then $J_p$ is a single-valued bijection and in this case, $J_p= J_{q^*}^{-1}$, where $J_{q^*}$ is the duality mapping of $E^*$.
 \begin{thm}
Let $E$ be a uniformly smooth, 2-uniformly convex and real reflexive Banach space. Suppose $C$ be a nonempty, closed and convex subset of $E$. Let that $\{A_j\}_{j=1}^N$ be a finite family of pseudomonotone and $L_i$-Lipschitz continuous mapping from $C$ to $E^*$.  Suppose $\{T_n\}_{n\in \mathbb{N}}$ is a family of weak relatively nonexpansive mappings of $C$ into itself and let $\{\beta_{n,k}:k,n\in \mathbb{N}, 1\leq k\leq n\}$ be a sequence of real numbers such that $0<\beta_{n,1}\leq 1$ and $0<\beta_{n,i}<1$ for all $n\in \mathbb{N}$ and $i=2, 3, ..., n$. Suppose $W_n$ is the $W$-mapping generated by $T_n, T_{n-1}, ..., T_1$ and $\beta_{n,n}, \beta_{n,n-1}, ..., \beta_{n,1}$. Let $S:C\rightarrow C$ be a quasi-nonexpansive mapping and demiclosed mapping. Suppose $\{B_i\}_{i=1}^2$ is a family of two inverse strongly monotone mappings of $C$ into $E$. Assume that% Let $G:E\rightarrow 2^{E^*}$ be a maximal monotone mapping on $E$ and let $ Q_\eta=Res_{\eta G}^f = (J+\eta G)^{-1}J$ be the resolvent of $G$ for $\eta>0$.

 \begin{equation*}
  \Omega =\big(\cap_{j=1}^N VI(A_j,C)\big)\bigcap F(S)\bigcap \big(\cap_{r=1}^\infty F(T_r)\big)\bigcap \big(\cap_{i=1}^2(B_i+\partial i_C)^{-1}0^*\big)\neq \emptyset.
  \end{equation*}
  Let $\{x_n\}$, $\{y_n\}$, $\{z_n\}$ and $\{\bar{v}_n\}$ be the sequences defined by
   \begin{align}\label{mnzkk}
  \begin{cases}
  x_1=x\in C, \quad chosen\;\; arbitrarily,\\
  C_1=C,\\
  t^j_n= \Pi_C[J^{-1}(J(x_n)-\alpha_nA_j(x_n))],\quad j=1,2,\ldots, N,\\
  v^j_n= \Pi_C[J^{-1}(J(x_n)-\alpha_nA_j(t^j_n))],\quad j=1,2,\ldots, N,\\
  j_n\in Argmax\big\{\phi(v^j_n, x_n), j=1,2,\ldots, N\big\},\quad \bar{v}_n=v^{j_n}_n,\\
  y_n=J^{-1}\big((1-\lambda_n)JW_n(\bar{v}_n)+\lambda_nJS(\bar{v}_n)\big),\\
  z_n=J^{-1}\big((1-\sigma)J\Pi_CJ^{-1}(J-\eta_nB_1)(y_n)+\sigma J\Pi_CJ^{-1}(J-\eta_nB_2)(y_n)\big),\\
  %u_n= J_{r_n}z_n,\\
  C_n=\big\lbrace z\in C: \phi(z,\bar{v}_n)\leq \phi(z,x_n)\big\rbrace,\\
  D_n=\big\lbrace z\in E: \phi(z,y_n)\leq \phi(z,\bar{v}_n)\big\rbrace,\\
  M_n=\big\lbrace z\in E: \phi(z,z_n)\leq \phi(z,y_n)\big\rbrace,\\
  x_{n+1}=\Pi_{C_n\cap D_n\cap M_n}(x_1),\quad \forall n\in \mathbb{N},
  \end{cases}
  \end{align}
where $\{\eta_n\}\subseteq(0,+\infty)$, $0<\sigma<1$ and $a,b\in \mathbb{R}$ be such that for all $n\in \mathbb{N}$, $0<a\leq\lambda_n\leq b<1$ and also, $\{\alpha_n\}\subseteq [r, s]\subseteq (0, p)$, where $p=\displaystyle\min_{1\leq j\leq N}\frac{2\tau}{L_j}$ and $\tau$ is given by \eqref{kcv45t}.\\
Then $\{x_n\}$ converges strongly to a point $\omega_0\in \Omega$ where $\omega_0=\Pi_\Omega(x_1)$.
  \end{thm}
  \begin{proof}
    Suppose $h_j(x,y):=\langle y-x,A_j(x)\rangle$ for each $x,y\in C$ and $j=1,2,\cdots,N$. Since $A_j$ is $L_i$-Lipschitz continuous, for every $x, y, z \in C$, we have that
    \begin{align*}
    h_j(x,y)+h_j(y,z)-h_j(x,z)&=\langle y-x,A_j(x)\rangle+\langle z-y,A_j(y)\rangle-\langle z-x,A_j(x)\rangle\\
    &=-\langle y-z,A_j(y)-A_j(x)\rangle\\
    &\geq-\|A_j(y)-A_j(x)\|\|y-z\|\\
    &\geq-L_j\|y-x\|\|y-z\|\\
    &\geq-\frac{L_j}{2}\|y-x\|^2-\frac{L_j}{2}\|y-z\|^2\\
    &\geq-\frac{L_j}{4\tau}\phi(y,x)-\frac{L_j}{4\tau}\phi(z,y).\qquad by \eqref{kcv45t}.
    \end{align*}
    Therefore, $h_j$ is Bregman-Lipschitz-type continuous and $c_{j,1}=c_{j,2}=\frac{L_j}{2\tau}$ with respect to $\frac{1}{2}\|.\|^2$. Moreover, the pseudomonotonicity of $A_j$ ensure the pseudomonotonicity of $h_j$. The conditions A3, A4, and A5 are satisfied automatically. Using Theorem \ref{sdo3mn} and Lemma \ref{fg9jd}, we get the desired result.
  \end{proof}
   \section{\textbf{Numerical Experiment}}
  In the section, we demonstrate our main theorem with an example.
  \begin{ex}
    Let $E=\mathbb{R}, C=[0,1], f(.)=\frac{1}{2}\|.\|$ and $\xi_1=\frac{1}{100}, \xi_2=\frac{2}{100}, \xi_3=\frac{3}{100}$. For $j=1, 2, 3$ define the bifunctions $h_j$ on $C\times C$ into $\mathbb{R}$ as follows:
    \begin{equation*}
      h_j(x,y)=A_j(x)(y-x),
    \end{equation*}
    where
    \begin{align*}
    A_j(x)=
    \begin{cases}
     0, \quad\quad 0\leq x\leq \xi_j,\\
     sin(x-\xi_j), \quad\quad \xi_j\leq x\leq 1.
    \end{cases}
    \end{align*}
    Obviously, the bifunctions $h_j$ satify the conditions A1, A3, A4, and A5. Furthermore
  \begin{align*}
   h_j(x,y)+h_j(y,z)-h_j(x,z)&=(y-z)(A_j(x)-A_j(y))\\
   &\geq-|y-z||x-y|\\
   &\geq-\frac{(y-z)^2}{2}-\frac{(x-y)^2}{2}\\
   &=-D_{\frac{1}{2}\|.\|^2}(z,y)-D_{\frac{1}{2}\|.\|^2}(y,x),
  \end{align*}
  which proves the condition A2 with $c_{j,1}=c_{j,2}=1$. Also, define $W_n=I$, and $S(\bar{v}_n)=\frac{1}{2}\bar{v}_n$, and $\lambda_n=\frac{1}{2}$, and $\alpha_n=\frac{1}{n+2}+\frac{1}{2}$, and $B_1(y_n)=B_2(y_n)=\frac{1}{3}y_n$ for each $n\in \mathbb{N}$. A simple computation shows that algorithm \eqref{mnzkk} takes the following from:
  \begin{align}\label{sjcgv56}
  \begin{cases}
  t^j_n= x_n-\alpha_nA_j(x_n),\quad j=1,2,3,\\
  v^j_n= x_n-\alpha_nA_j(t^j_n),\quad j=1,2,3,\\
  j_n\in Argmax\big\{|v^j_n- x_n|, j=1,2,3\big\},\quad \bar{v}_n=v^{j_n}_n,\\
  y_n=\frac{3}{4}\bar{v}_n,\\
  z_n=\frac{(3-\eta_n)y_n}{3},\\
  C_n=\big\lbrace z\in C: |z-\bar{v}_n|\leq |z-x_n|\big\rbrace,\\
  D_n=\big\lbrace z\in E: |z-y_n|\leq |z-\bar{v}_n|\big\rbrace,\\
  M_n=\big\lbrace z\in E: |z-z_n|\leq |z-y_n|\big\rbrace,\\
  x_{n+1}=P_{C_n\cap D_n\cap M_n}(x_1),\quad \forall n\in \mathbb{N},
  \end{cases}
  \end{align}
  \end{ex}
  It can be observed that all assumptions of Theorem \ref{asli} are satisfied and  $\Omega=\{0\}$. Using algorithm \eqref{sjcgv56} with the initial point $x_1 = 1$, we have $x_n\rightarrow 0$.
  \section{\textbf{Data Availability}}
  %\begin{align*}
  %\begin{cases}
  %C_n=\big\lbrace z\in C: z\leq \frac{\bar{v}_n+x_n}{2}\big\rbrace,\\
 %D_n=\big\lbrace z\in E:  z\leq \frac{7}{8}\bar{v}_n\big\rbrace,\\
  %M_n=\big\lbrace z\in E: z\leq \frac{(18-3\eta_n)\bar{v}_n}{24}\big\rbrace
  %\end{cases}
  %\end{align*}
  No data were used to support the study.

\vspace{0.1in}
\hrule width \hsize \kern 1mm
%\hrule width \hsize height 2pt
% ------------------------------------------------------------------------
\end{document}